\documentclass[a4paper,10pt]{article}
\usepackage{LatexDefinitions}
\usepackage{amssymb}
\usepackage{caption}
\usepackage{dsfont}
\usepackage{dsfont}
\usetikzlibrary{patterns}
\newtheorem*{theorem*}{Theorem}

\newcommand{\E}{\mathbb{E}}

\newcommand{\Pb}{\mathbb{P}}
\newcommand{\pl}[0]{f_M^N}
\newcommand{\epl}[0]{f_M}
\newcommand{\lpl}[0]{f_\infty}
\newcommand{\mms}[0]{\theta_M^N}
\newcommand{\emms}[0]{\theta_M}
\newcommand{\lmms}[0]{\theta_\infty}
\newcommand{\lip}{\mathcal{L}}
\newcommand{\bk}[1]{\left(#1\right)}

\definecolor{ab}{rgb}{0.94, 0.97, 1.0}
\title{Density estimation from batched broken random samples}
\author{Hancheng Bi, Bernhard Schmitzer, Thilo D. Stier}

\begin{document}
\maketitle
\begin{abstract}
The \textit{broken random sample} problem was first introduced by DeGroot, Feder, and Gole (1971, Ann.\ Math.\ Statist.): in each observation (batch), a random sample of $M$ i.i.d.\ point pairs $ ((X_i,Y_i))_{i=1}^M$ is drawn from a joint distribution with density $p(x,y)$, but we can observe only the unordered multisets $(X_i)_{i=1}^M$ and $(Y_i)_{i=1}^M$ separately; that is, the pairing information is lost. For large $M$, inferring $p$ from a single observation has been shown to be essentially impossible. In this paper, we propose a parametric method based on a pseudo-log-likelihood to estimate $p$ from $N$ i.i.d.\ broken sample batches, and we prove a fast convergence rate in $N$ for our estimator that is uniform in $M$, under mild assumptions.
\end{abstract}

\section{Introduction}
\label{sec:Intro}
\subsection{Problem statement and related work}
\label{sec:IntroProbStatement}
\paragraph{Problem statement.} Let $\mathbb{X}, \mathbb{Y}$ be measurable spaces and $\pi\in \mathcal{P}(\mathbb{X}\times \mathbb{Y})$ be a joint probability measure. 
Let $(X_i, Y_i)_{i=1}^M$ be independent and identically distributed (i.i.d.) pairs of random variables with law $\pi$. We assume that we do not have access to the actual pairing information, i.e.~we will only observe the random \emph{multisets} $\{X_1,\cdots,X_M\}$ and $\{Y_1,\cdots,Y_M\}$ but not the ordering of the points. 
This is called a \textit{broken random sample} from $\pi$, originally introduced in \cite{degroot1971matchmaking}. Previous research on extracting information from a broken sample when $\pi$ is a bi-variate normal distribution includes~\cite{degroot1980} and \cite{chan2001file}. In particular, \cite{bai2005broken} proved that estimating the correlation parameter is not possible under some mild conditions. In this paper we assume that instead of observing a single broken sample, we have access to $N$ i.i.d.~observations of broken samples, i.e.~we have i.i.d.~pairs of random variables $((X_i^k,Y_i^k)_{i=1}^M)_{k=1}^N$ with law $\pi$ and we observe the collection of multisets
\begin{align*}
    \Big(\{X_1^k,\cdots,X_M^k\},\{Y_1^k,\cdots,Y_M^k\}\Big)_{k=1}^N.
\end{align*}

We are interested in estimating $\pi$ in a parametric setting from these i.i.d.~samples and investigate the behaviour of the estimator with respect to $N$ and $M$.
We refer to each observed pair of multisets $(\{X_1^k,\cdots,X_M^k\},\{Y_1^k,\cdots,Y_M^k\})$ as a \emph{batch} and to the problem of estimating  $\pi$ as the \emph{batched broken sample problem}.

If $M = 1$ we recover the classical problem of estimating $\pi$ from $N$ i.i.d.~samples. 
However, due to the missing pairing information, as $M \to \infty$, intuitively the batches will be approximately independent samples from the marginal distributions of $\pi$, which leads to the suspicion that the information about $\pi$ carried by each batch could decrease to zero as $M$ increases. 
In the following we prove that this is not the case.

\paragraph{Applications.}
The batched broken sample problem serves as a idealised model for particle colocalisation analysis. \Cref{fig:coloc} shows part of a super-resolution fluorescence microscopy image of immunolabelled human cells. The HA-tag in the mitochondrial outer membrane and the protein Mic60 in the mitochondrial inner membrane are tagged with green and purple markers, respectively. Most proteins appear to be located within close spatial proximity of a protein from the other species. Is this because the proteins form bound pairs to realize their biological function, or are the protein locations independent from each other and colocalisation is merely accidental, stemming from the high density of particles in a confined region? If one models the protein locations of the two species as random variables $X_i$ and $Y_i$, as above, then the two situations can be distinguished, based on the law of $\pi$: For bound pairs, $X_i$ and $Y_i$ will be close with high probability; in the case of accidental colocalisation, $\pi$ is an independent product measure.
Of course, to realistically estimate $\pi$ from images one must take into account several additional effects: not all particles are actually captured in the imaging process; even when particles do tend to form pairs, not all particles are necessarily paired at all times; and particles in different images will follow slightly different laws $\pi$. However, our simplified model does demonstrate that increasing particle density is not necessarily a problem for inferring $\pi$.

\begin{figure}[]
    \centering
    \includegraphics[width = \textwidth]{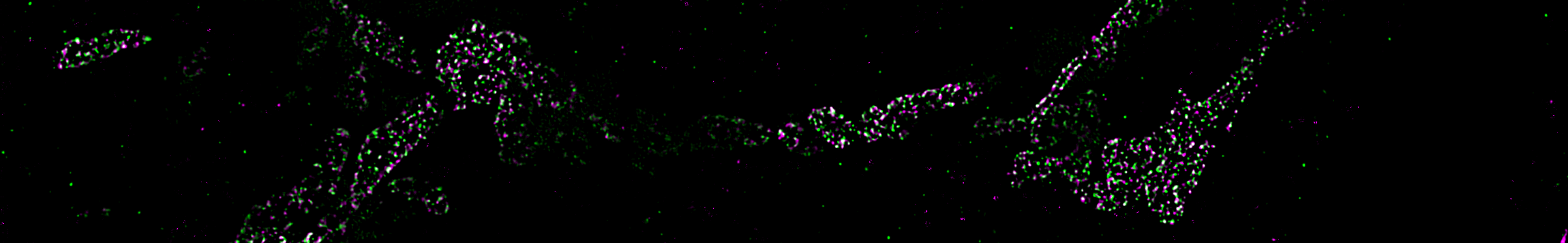}
    \caption{Stimulated emission depletion (STED) microscopy image, part of \cite[Fig 37(A)]{dohrke2024puck}. Cells were stained for
    the HA-tag (green) and Mic60 (purple).}
    \label{fig:coloc}
\end{figure}

Another application is the analysis of time-discrete dynamical systems. In this case we interpret $X_i$ and $Y_i$ as positions of a particle at two subsequent time steps and a common task is to infer the conditional distribution of $Y_i$, given $X_i$. $M=1$ corresponds to observing the evolution of a single particle whereas $M>1$ corresponds to the case where we observe multiple indistinguishable particles, such as floaters in a fluid as in particle image velocimetry, where small particles are seeded into the fluid and illuminated by a pulsed laser. The distribution of light scattered by these particles is recorded each time the laser pulses, thereby creating a discrete sequence of images (see, e.g., \cite[Fig. 6]{AguehPIV2015}). The task is to retrieve the flow velocity. 

When the point density is relatively low, or when points move slowly, both of the above problems can be tackled using optimal-transport-based methods; see \cite{tameling2021colocalization} and \cite{AguehPIV2015,OTCoherentSet2021}, respectively. However, optimal transport tends to bias the pairing towards coincidentally close points, which becomes more likely as the point density increases. In this paper, we introduce a parametric method to estimate the density of the law $\pi$, and we prove quantitatively that the method is not affected in the high-density regime (i.e.~as $M \to \infty$). A non-parametric method using a similar loss function has been discussed in \cite{TransferOp}, along with a qualitative proof of convergence as $N \to \infty$ in a suitable sense.

\subsection{Outline}
Throughout the rest of this section we introduce assumptions and notation, state the problem in detail and preview the main results. 
In \Cref{sec:theory} we collect or prove necessary auxiliary theorems and lemmata. Our main results are proven in \Cref{sec:proof}.
Some numerical illustrations are shown in \Cref{sec:num}.

\subsection{Problem description and main results}
Let $\mu \in \mathcal{P}(\mathbb{X})$ and $\nu \in \mathcal{P}(\mathbb{Y})$ be the marginal distributions of $\pi$. 
In a setting where $\pi$ can be identified from the marginals, the problem reduces to conventional density estimation.
Therefore, throughout this paper, we always assume $\mu, \nu$ are known, and we are interested in estimating the density of $\pi$ with respect to $\mu\otimes\nu$, analogous to \cite[p.~529]{bai2005broken}.
In practice one may first estimate $\mu$ and $\nu$ with very high precision from the $M \cdot N$ i.i.d.~random variables $(X_i^k)_{i,k}$ and $(Y_i^k)_{i,k}$.

\begin{assumption}
    $\pi = p^* \cdot (\mu\otimes\nu)$ for some $p^* : \mathbb{X}\times \mathbb{Y} \to \mathbb{R}$.
\end{assumption}
\noindent We also need to define a candidate set from which we pick the estimator. Here we use a parametric class.
\begin{definition}\label{def:candidate}
Let $\Theta$ be an open subset of finite dimensional Euclidean space with compact closure $\Bar{\Theta}$, let $D:= \sup_{\theta,\theta'\in\Theta}\norm{\theta - \theta'}$ be its diameter.
Let $(p^\theta)_{\theta\in\Bar\Theta}$ be a parametric class of densities with the following properties:
\begin{enumerate}
    \item The set of functions $\{\Bar{\Theta} \ni \theta\mapsto p^\theta(x,y) \mid (x,y)\in\mathbb{X}\times\mathbb{Y}\}$ is twice continuously differentiable when restricted to $\Theta$, equibounded by some constant $0 < U < \infty$, equi-Lipschitz with some Lipschitz constant $\mathcal{L} < \infty$, and all directional derivatives of second order with respect to $\theta\in\Theta$ are equibounded by $\lip' < \infty$.
    \item For all $\theta\in\Theta$, $x \in \mathbb{X}$, $y \in \mathbb{Y}$, we have $\int_\mathbb{X} p^\theta(x', y) \diff\mu(x') = \int_\mathbb{Y} p^\theta(x, y') \diff\nu(y') = 1$. 
\end{enumerate}
\end{definition}

\begin{remark}[Breaking the samples]\label{rem:fl_BreakingSamples}
    In order to `forget' the pairing information within batches, the authors of \cite{TransferOp} introduced an auxiliary random variable, a uniformly sampled permutation $\sigma \sim \mathcal{U}(\text{Sym}(M))$, and assume that, for each batch one can only observe $((X_i,Y_{\sigma(i)}))_{i=1}^M$. As discussed in \cite{TransferOp}, in this model the negative log-likelihood is given by
    \begin{align}\label{eq:loglikelihood}
    \ell(\theta) = -\frac{1}{N}\sum_{k=1}^N \log\left(\frac{1}{M!}\sum_{\sigma\in S_M} \prod_{i=1}^M p^\theta\left(X_i^k, Y_{\sigma(i)}^k\right)\right).
    \end{align}
    While this is a rigorous way to formulate the broken-sample problem, it introduces an extra random variable and complicates the probability space. 

    In this paper, we take a different approach for the sake of simplicity:
    Instead of explicitly modelling the broken samples (e.g.~by applying an unknown uniformly random permutation to the points), we model the data with a known pairing but make sure that our estimator does not use any pairing information. Explicitly this emerges as the estimator being defined as the minimiser of a loss function which is invariant under within-batch permutation of the points.
\end{remark}

Clearly, \Cref{eq:loglikelihood} is permutation invariant and therefore in principle it can be used as loss function. However it is not feasible in practice unless $M$ is very small, since the complexity of evaluation of $\ell(\theta)$ is in general NP-hard \cite{VALIANT1979189}. Inspired by \cite{TransferOp}, we define our loss function through a negative pseudo log-likelihood
\begin{align}
\label{eq:pl}
    \pl(\theta) \assign -\frac{1}{N}\sum_{k=1}^N \sum_{i=1}^M\sum_{j=1}^M \log\left(\frac{1}{M}p^\theta(X_i^k,Y_j^k) + \frac{M-1}{M}\right) \quad \text{and its minimiser} \quad \mms \in \argmin_{\theta\in\Bar\Theta} \pl(\theta).
\end{align}
In addition, we introduce
\begin{align*}
    \epl(\theta) & := \mathbb{E}(\pl(\theta)), &
    \lpl(\theta) & := \lim_{M\to\infty} \epl(\theta), &
    \emms & \in \argmin_{\theta\in\Bar\Theta} \epl(\theta), &
    \lmms & \in \argmin_{\theta\in\Bar\Theta} \lpl(\theta).
\end{align*}
At first glance, one might suspect that $\pl$ scales as $O(M^2)$ as $M \to \infty$ because of the double sum over $i$ and $j$. This concern is resolved by \Cref{thm:mean_pseudo}, which shows that the expectation $\epl(\theta)$ has a finite limit as $M \to \infty$.

\begin{remark}\label{rem:intuition}
    Intuitively the negative pseudo-log-likelihood \eqref{eq:pl} arises as follows.
    When uniformly choosing a random pair $(i, j)$ of point indices in some batch $k$, there is a $\frac{1}{M}$ chance that $X := X^k_i$ and $Y := Y^k_j$ form a pair with distribution $\pi$ and a $\frac{M - 1}{M}$ chance that they are independent, i.e.~$(X,Y) \sim \left(\frac{1}{M} p^* + \frac{M-1}{M}\right) (\mu\otimes\nu)$.
    Different pairs $(X^k_i,Y^k_j)$ (for the same $k$, but different $i,j$) will in general not be independent from each other. However, if we allow for this approximation, \eqref{eq:pl} arises as the sum of the (negative) log-likelihoods of all pairs.
\end{remark}

\begin{remark}[Comparison with \cite{TransferOp}]
In \cite{TransferOp} the following pseudo-likelihood
\begin{align}
    \label{eq:plMix}
    \theta \mapsto -\frac{1}{N}\sum_{k=1}^N \sum_{j=1}^M\log\left(\frac{1}{M}\sum_{i=1}^M p^\theta(X_i^k,Y_j^k) \right)
\end{align}
is considered instead of \eqref{eq:pl}. This is also permutation invariant. The term $\frac{1}{M}\sum_{i=1}^M p^\theta(X_i^k,Y_j^k)$ appearing within the logarithm corresponds to the conditional probability of $Y_j^k$, conditioned on the tuple $(X_i^k)_{i=1}^M$, if one assumes that the pairing information is unavailable. \eqref{eq:plMix} then arises by arguing analogously to \Cref{rem:intuition} by making the approximation that the variables $(Y_j^k)_{j=1}^M$ are independent, when conditioned on $(X_i^k)_{i=1}^M$.
Using techniques from \cite{TransferOp} it can be argued that \eqref{eq:plMix} is a more accurate approximation than \eqref{eq:pl}, however we consider the latter in this manuscript as it is more amenable to quantitative convergence analysis.
\end{remark}

\noindent We can now state our main results.
The first result is on the asymptotic form of the expected pseudo likelihood as $M \to \infty$.
\begin{theorem}
    \label{thm:mean_pseudo}
    \begin{align*}
        \epl(\theta) &=  \lpl(\theta) + O(M^{-1}) \\
        \intertext{where}
        \lpl(\theta) &= \frac{1}{2}\Big(\norm{p^\theta- p^*}^2_{L^2(\mu\otimes\nu)} - \norm{p^*}^2_{L^2(\mu\otimes\nu)} + 1\Big).
    \end{align*}
\end{theorem}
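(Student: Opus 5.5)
Taking expectations in \eqref{eq:pl} and using that the batches are i.i.d., $\epl(\theta)$ equals the expectation of the inner double sum for a single batch. I would split this double sum into the $M$ diagonal terms $i=j$, where $(X_i,Y_i)\sim\pi = p^*\cdot(\mu\otimes\nu)$, and the $M(M-1)$ off-diagonal terms $i\neq j$, where $X_i$ and $Y_j$ are independent, so $(X_i,Y_j)\sim\mu\otimes\nu$. Writing $\phi_M(t):=\log\bigl(1+\tfrac{t-1}{M}\bigr)$, so that $\log\bigl(\tfrac1M p^\theta+\tfrac{M-1}{M}\bigr)=\phi_M(p^\theta)$, this gives the exact identity
\begin{align*}
\epl(\theta) = -M\int \phi_M(p^\theta)\,p^*\diff(\mu\otimes\nu) - M(M-1)\int \phi_M(p^\theta)\diff(\mu\otimes\nu).
\end{align*}
This step needs only linearity of expectation and the identical distribution of the pairs; the dependence between different pairs plays no role.

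Next I would Taylor expand $\phi_M(t)=\frac{t-1}{M}-\frac{(t-1)^2}{2M^2}+R_M(t)$ with $|R_M(t)|\le C\,|t-1|^3/M^3$. This bound holds uniformly in $t\in[0,U]$ and $M\ge 2$, because then $\frac{t-1}{M}\in[-\tfrac12,\tfrac{U}{2}]$ stays in a compact set away from $-1$, where $\log(1+s)$ has a bounded third derivative; here the equiboundedness $0\le p^\theta\le U$ from \Cref{def:candidate} is used. The key observation is that the first-order term in the off-diagonal part vanishes: by property 2 of \Cref{def:candidate} (integrating first in $x'$, say), $\int p^\theta\diff(\mu\otimes\nu)=1$, so $\int (p^\theta-1)\diff(\mu\otimes\nu)=0$. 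This kills the would-be $O(M)$ contribution. What remains of the off-diagonal part is
\begin{align*}
\frac{M(M-1)}{2M^2}\int (p^\theta-1)^2\diff(\mu\otimes\nu) + O(M^{-1}),
\end{align*}
since $M(M-1)\cdot O(M^{-3})=O(M^{-1})$. The diagonal part is $-\int (p^\theta-1)p^*\diff(\mu\otimes\nu)+O(M^{-1})$, where the remainder is controlled using $\int p^*\diff(\mu\otimes\nu)=1$ together with the uniform bound on $(p^\theta-1)^2$ and $R_M$.

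Collecting terms with $\frac{M-1}{M}=1-\frac1M$ gives
\begin{align*}
\epl(\theta)=\tfrac12\norm{p^\theta-1}^2-\int p^\theta p^*+1+O(M^{-1}).
\end{align*}
Expanding $\norm{p^\theta-1}^2=\norm{p^\theta}^2-1$ (again by normalisation), the right-hand side becomes $\tfrac12\norm{p^\theta}^2-\langle p^\theta,p^*\rangle+\tfrac12$. This equals $\tfrac12\bigl(\norm{p^\theta-p^*}^2-\norm{p^*}^2+1\bigr)$, which is $\lpl(\theta)$ as claimed.

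The main obstacle is justifying that all remainders are $O(M^{-1})$. The off-diagonal sum has $M^2$ terms, so the cancellation of the linear term via the marginal constraint and a genuinely cubic remainder bound are essential. For the diagonal terms one needs $p^*$ integrable against the bounded quantities, which holds since $p^*$ is a probability density; if $\norm{p^*}_{L^2}$ is infinite, the formula is read with both sides equal to $+\infty$ consistently, or one assumes $p^*\in L^2$. I would also point out that the constants are uniform in $\theta$, since they depend only on $U$.
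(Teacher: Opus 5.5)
Your proposal is correct and follows the paper's proof essentially step by step. It uses the same diagonal/off-diagonal split giving the exact identity (which the paper merges into $-\epl(\theta)=M^2\int \log\bigl(1+\tfrac{1}{M}(p^\theta-1)\bigr)\bigl(\tfrac1M p^*+\tfrac{M-1}{M}\bigr)\diff\mu\otimes\nu$), the same second-order Taylor expansion with a uniformly bounded cubic remainder, and the same cancellation of the linear off-diagonal term via $\int (p^\theta-1)\diff\mu\otimes\nu=0$.

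One small correction to your closing remark: $\epl(\theta)$ is always finite, because the integrand is bounded ($0\le p^\theta\le U$, $M\ge 2$) and $\int p^*\diff\mu\otimes\nu=1$. So if $p^*\notin L^2$, the right reading is not ``both sides $+\infty$''. Instead one should read $\lpl(\theta)=\tfrac12\norm{p^\theta}^2-\langle p^\theta,p^*\rangle+\tfrac12$, which is exactly the finite expression your derivation produces.
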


\begin{remark}[Behaviour of minimizers of $\epl$]\label{rem:KL}
    When $M=1$, $\epl$ reduces to the standard population negative log-likelihood. It is well known that minimising the negative log-likelihood is equivalent to minimising the $\KL$ divergence between the true distribution and our estimated one. For arbitrary $M$, analogously to \cite[Proposition A.4]{TransferOp}, we have
    \begin{align*}
        \epl (\theta) 
        &=
        M^2 \cdot \KL\left((\tfrac{1}{M}p^* + \tfrac{M-1}{M}) \cdot \mu\otimes\nu \ \middle|
        \ (\tfrac{1}{M}p^\theta + \tfrac{M-1}{M}) \cdot \mu\otimes\nu \right) + \const.
    \end{align*}
    This follows from \Cref{eq:Eql_integral} in the proof of \Cref{thm:mean_pseudo}. Hence, if $p^*\in(p^\theta)_{\theta\in\Theta}$, we conclude that any minimiser of $\epl$ equals $p^*$ $\mu\otimes\nu$-almost everywhere.
    If $p^*\notin(p^\theta)_{\theta\in\Theta}$, then for $M=1$, minimizers of $\epl$ will be KL-projections of $p^*$ onto the parametric class.
    By \Cref{thm:mean_pseudo}, as $M \to \infty$ minimizers of $\epl$ should converge to the $L^2(\mu\otimes \nu)$-projection.
    This becomes more concrete in \Cref{thm:main-estimator}.
\end{remark}

The next result is on the concentration of $\pl$ around $\epl$, uniformly in $\theta$.
\begin{theorem}
\label{thm:main_detailed}
    Let $D,U,\mathcal{L}$ be as in \Cref{def:candidate}, let
    \begin{align*}
        J := \int_0^D \log\left(1+ N(\varepsilon;\Theta, \norm{\cdot})\right) \diff\varepsilon < \infty,
    \end{align*}
    in which $N(\varepsilon;\Theta, \norm{\cdot})$ is the covering number of $(\Theta, \norm{\cdot})$ with radius $\varepsilon$. Note that $J$ is finite under our assumption for $\Theta$. Then there exists an absolute constant $c>0$ such that
    \begin{align*}
        \Pb\left(\sup_{\theta\in\Theta}\abs{\pl(\theta) - \epl(\theta)} \geq 6t\right) 
        \leq 
        6\exp\left(-c\min\Big\{ \frac{t^2 N}{U^4}, \frac{tN}{U}\Big\}\right) 
        + 6\exp\left(\frac{J}{D}\right)\exp\left(-\frac{ct\sqrt{N}}{D\mathcal{L}U}\right)
    \end{align*}
    for all $t > 0$.
\end{theorem}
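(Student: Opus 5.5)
The plan is to write $\pl(\theta) = \frac1N\sum_{k=1}^N Z_k(\theta)$, with
\[
Z_k(\theta) := -\sum_{i,j=1}^M \log\Bigl(1 + \tfrac{1}{M}\bigl(p^\theta(X_i^k,Y_j^k)-1\bigr)\Bigr).
\]
The $Z_k$ are i.i.d. copies of a single random function $Z(\theta)$. I would split the deviation as
\[
\sup_\theta\abs{\pl-\epl}\le \abs{\pl(\theta_0)-\epl(\theta_0)} + \sup_{\theta}\abs{(\pl-\epl)(\theta)-(\pl-\epl)(\theta_0)}
\]
for a fixed $\theta_0\in\Theta$. The first term should produce the Bernstein-type summand with $\min\{t^2N/U^4,tN/U\}$. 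The second, a supremum of increments, should produce the chaining summand involving $J$. The constant $6$ and the factor $6t$ suggest each of these is further split into about three pieces, whose probabilities are added.

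\emph{Step 1: $Z(\theta)-\E Z(\theta)$ is sub-exponential uniformly in $M$.} Set $u_{ij}=(p^\theta(X_i,Y_j)-1)/M$, so that $\abs{u_{ij}}\le U/M$. Taylor-expand $\log(1+u)=u+R(u)$ with $\abs{R(u)}\lesssim u^2$ for $M$ larger than a multiple of $U$; small $M$ can be absorbed by a crude bound. Then $Z$ splits into three pieces:
\begin{itemize}
\item[(a)] the diagonal linear part $-\frac1M\sum_i (p^\theta(X_i,Y_i)-1)$, which is bounded by $O(U)$;
\item[(b)] the remainder $\sum_{i,j}R(u_{ij})$, which is bounded by $O(U^2)$ since it has $M^2$ terms of size $U^2/M^2$;
\item[(c)] the off-diagonal part $-\frac1M\sum_{i\neq j}(p^\theta(X_i,Y_j)-1)$.
\end{itemize}
The key observation for (c) is that for $i\ne j$ the variables $X_i$ and $Y_j$ are independent with laws $\mu$ and $\nu$. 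By item 2 of \Cref{def:candidate}, the kernel $h(x,y)=p^\theta(x,y)-1$ then has zero conditional means in each variable. Hence (c) is, after decoupling, a \emph{degenerate} U-statistic of order two in the i.i.d. pairs $(X_i,Y_i)$. By the exponential inequality for degenerate U-statistics (Arcones--Giné / Giné--Latała--Zinn, via de la Peña--Montgomery-Smith decoupling), $\sum_{i\ne j}h$ has $\psi_1$-norm $\lesssim M\norm{h}_\infty\lesssim MU$, so (c) has $\psi_1$-norm $O(U)$, uniformly in $M$. Applying Bernstein's inequality to the averages over $k$ of the centred versions of (a), (b), (c) then gives the first summand, with the $U^4$ coming from the $U^2$-sized piece (b).

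\emph{Step 2: the increment process.} Differentiating in $\theta$, the derivative of $Z$ is
\[
-\sum_{i,j}\frac{\frac1M\partial_\theta p^\theta(X_i,Y_j)}{1+u_{ij}}.
\]
Since the marginal integrals of $p^\theta$ equal $1$ for every $\theta$, $\partial_\theta p^\theta$ again has zero $\mu$- and $\nu$-marginals. Therefore the same three-way decomposition applied to $Z(\theta)-Z(\theta')$ gives
\[
\norm{(Z-\E Z)(\theta)-(Z-\E Z)(\theta')}_{\psi_1}\lesssim \mathcal{L}U\norm{\theta-\theta'},
\]
using equi-Lipschitzness for the kernel difference and boundedness by $U$ for the denominators and remainders. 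For the average over $N$ batches, the $\psi_1$-increment norm improves to $\lesssim \mathcal{L}U\norm{\theta-\theta'}/\sqrt N$ in the relevant deviation range. Generic chaining for processes with $\psi_1$-increments (Dudley's entropy bound with $\log N$ in place of $\sqrt{\log N}$) then gives a tail bound for the supremum in the form
\[
\exp(J/D)\exp\bigl(-ct\sqrt N/(D\mathcal{L}U)\bigr),
\]
where $J$ is exactly the $\psi_1$ entropy integral in the statement.

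\emph{Main obstacle.} The hard step is the uniform-in-$M$ $\psi_1$ control of the off-diagonal part (c) and of its increments. A naive bounded-differences argument only gives fluctuations of order $\sqrt M\,U$, so the degeneracy coming from the marginal constraints must be exploited. I would first try decoupling followed by conditioning on the $X$'s, so that the inner sum over $j$ becomes a sum of independent centred bounded variables, and then apply a Hanson--Wright-type or Giné--Latała--Zinn bound. I would also need to check that the remainder terms stay controlled when $U/M$ is not small.
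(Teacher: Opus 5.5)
Your proposal is correct, and its overall structure matches the paper's. You use the same three-way split: your (a), (b), (c) are the paper's $f_b$, $f_c$, $f_a$. You apply Hoeffding/Bernstein at a fixed $\theta_0$ to get the $\min\{t^2N/U^4,tN/U\}$ term, with single-batch $\psi_1$-increment constants $\lip$, $\lip U$, $\lip$. You use the $1/\sqrt N$ gain for batch averages and the $\psi_1$ version of Dudley's bound. You chain the summed process once, whereas the paper chains each piece and takes a union bound; that difference is immaterial.

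The genuine difference is the key step: the $M$-uniform $\psi_1$ control of $\frac1M\sum_{i\neq j}h(X_i,Y_j)$. You recognise it as a canonical U-statistic of order two in the pairs $(X_i,Y_i)$ and import Arcones--Gin\'e or Gin\'e--Lata{\l}a--Zinn. The paper instead proves what it needs by hand. Writing $F_1$ for the single-batch off-diagonal term, for even $l$ it expands $\E\bigl(F_1(\theta)-F_1(\theta')\bigr)^l$ over index strings $(i_1,\dots,i_l,j_1,\dots,j_l)$. Any string in which some index occurs exactly once contributes zero: integrate out $y_v$ against $p^*$, then $x_v$ against the zero-marginal kernel, which is canonicity in disguise. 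The surviving strings are counted with \Cref{lem:count}, $\abs{S}\le(2Ml)^l$, giving $\E\abs{F_1(\theta)-F_1(\theta')}^l\le(2\lip\norm{\theta-\theta'}\,l)^l$; \Cref{lem:psi_norm} then yields the $\psi_1$ bound. The paper's route is elementary, self-contained and gives explicit constants. Yours is shorter given the literature, and via Gin\'e--Lata{\l}a--Zinn it could give variance-sensitive refinements in terms of $L^2$ and operator norms of the kernel rather than $\norm{h}_\infty$.

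Some smaller points:
\begin{itemize}
\item The fallback in your last paragraph needs more than conditioning on the $X$'s. You must either control the random conditional variance, or randomise first, replacing $h(X_i,Y'_j)$ by $\varepsilon_i\varepsilon'_jh(X_i,Y'_j)$ (legitimate because $h$ is canonical), to reach a decoupled Rademacher chaos where Hanson--Wright applies. A plain conditional Hoeffding step in the $Y'_j$ loses exactly the factor $\sqrt M$ you are trying to avoid.
\item Your caveat requiring $M\gtrsim U$ is unnecessary. Since $p^\theta\ge0$ and $M\ge2$, we have $1+u_{ij}\ge\frac12$, hence $\abs{R(u_{ij})}\le 2u_{ij}^2$ for every $M\ge2$.
\item \Cref{lem:psi_norm_iid} shows the batch average has $\psi_1$ norm $\lesssim\norm{F_1}_{\psi_1}/\sqrt N$ outright, with no restriction to a deviation range.
\item The increment bounds need no $\theta$-derivatives. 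The kernel $p^\theta-p^{\theta'}$ already has zero marginals and sup-norm at most $\lip\norm{\theta-\theta'}$. The remainder can be handled by the mean value theorem in the scalar $u$. This also avoids needing segments between parameters to stay inside the (not necessarily convex) set $\Theta$.
\end{itemize}
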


\noindent This suggests that the estimation quality will benefit from large $N$ as expected and will not be harmed by large $M$. 
Instead, in combination with \Cref{thm:mean_pseudo}, for large $M$ and $N$ we see that one can expect the estimator of $p^*$ to be close to the $L^2(\mu\otimes\nu)$-projection of $p^*$ onto $(p^\theta)_{\theta \in \Bar\Theta}$. As a consequence, when assuming uniqueness of this projection and strictly positive curvature of $\|p^\theta- p^*\|^2_{L^2(\mu\otimes\nu)}$ around it, one can get a lower bound of the convergence rate of the minimiser of $\pl(\theta)$ to the minimiser of $\epl(\theta)$ for large $M$:
\begin{theorem}
\label{thm:main-estimator}
    Assume that
    $\lmms\in\Theta$ is the unique minimiser of $\lpl$, and there exist some $\tau, r > 0$ such that the smallest eigenvalue of $\nabla^2 \lpl(\theta)$ is larger than $\tau$ for all $\theta\in\Theta$ with $\norm{\theta - \lmms}_2 \leq r$.
    Then there exist some $M_0\in \N, t_0 > 0$ s.t.~for all $M \geq M_0$ and $t \leq t_0$ we have
    \begin{align*}
        \Pb\left(\norm{\mms - \emms}_2 \geq t\right) \leq C_1\exp\left(-C_2\min\left\{t^2 N, t\sqrt{N}\right\}\right),
    \end{align*}
    where the empirical minimizer $\theta_M$ is unique and $C_1,C_2 >0$ are constants independent of $M$ and $N$.
\end{theorem}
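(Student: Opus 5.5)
The plan is a standard M-estimation argument. I will combine the uniform concentration of \Cref{thm:main_detailed} with a local strong-convexity (quadratic growth) property of $\epl$ that holds uniformly for $M \geq M_0$. That property is obtained by perturbing from $\lpl$ via \Cref{thm:mean_pseudo}.

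\textbf{Step 1: control of $\epl$ near $\lmms$, uniformly in $M$.} I first need \Cref{thm:mean_pseudo} at the level of first and second derivatives in $\theta$, not only of values. Concretely, I want
\begin{align*}
\sup_{\theta\in\Theta}\bigl|\nabla^2\epl(\theta)-\nabla^2\lpl(\theta)\bigr| = O(M^{-1}).
\end{align*}
To get this I would redo the expansion from the proof of \Cref{thm:mean_pseudo}. One writes $\epl(\theta)=-M^2\int \log(1+\tfrac{1}{M}(p^\theta-1))\,(\tfrac1M p^*+\tfrac{M-1}{M})\diff(\mu\otimes\nu)$. The integrand is then differentiated twice in $\theta$, using the bounds $U$, $\lip$, $\lip'$ together with a Taylor expansion of $\log(1+u)$ for $|u|\le U/M$. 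The normalisation condition (item 2 of \Cref{def:candidate}) kills the $O(M)$ term, exactly as in the value computation. This gives the following.
\begin{itemize}
\item[(a)] $\sup_\theta|\epl-\lpl|\le C/M$.
\item[(b)] For $M\ge M_0$, $\nabla^2\epl\succeq \tfrac{\tau}{2}I$ on the ball $B_r(\lmms)$.
\item[(c)] Every minimiser $\emms$ lies in $B_{r/2}(\lmms)$ for $M\ge M_0$. This follows from (a), uniqueness of $\lmms$ and compactness of $\bar\Theta$: outside $B_{r/2}(\lmms)$, $\lpl-\lpl(\lmms)\ge\delta>0$ by continuity.
\end{itemize}
Strict convexity on the ball then gives uniqueness of $\emms$, and $\nabla\epl(\emms)=0$ because $\emms$ is interior. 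Hence $\epl(\theta)-\epl(\emms)\ge \tfrac{\tau}{4}\|\theta-\emms\|^2$ on $B_{r/2}(\emms)$. Outside this ball the gap is at least a fixed constant $\eta>0$, independent of $M$, again by (a) and the separation $\delta$.

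\textbf{Step 2: from uniform deviation to parameter error.} Let $\Delta:=\sup_\theta|\pl-\epl|$. Since $\pl(\mms)\le\pl(\emms)$, we get $\epl(\mms)-\epl(\emms)\le 2\Delta$. Fix $t\le t_0$ small enough that $\tfrac{\tau}{4}t^2\le\eta$. If $\|\mms-\emms\|\ge t$, the growth bound of Step 1 forces $2\Delta\ge \tfrac{\tau}{4}t^2$. So the event in question is contained in $\{\Delta\ge \tau t^2/8\}$.

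\textbf{Step 3: apply \Cref{thm:main_detailed}.} I take $6s=\tau t^2/8$ in that theorem. The resulting bound is
\begin{align*}
6\exp\bigl(-c\min\{s^2N/U^4,\,sN/U\}\bigr)+6e^{J/D}\exp\bigl(-cs\sqrt N/(D\lip U)\bigr).
\end{align*}
Since $s\asymp t^2$ and $t\le t_0$, this is at most $C_1\exp(-C_2\min\{t^4N,\,t^2\sqrt N\})$. That is weaker than claimed, so the square from the quadratic growth costs too much. To recover $\min\{t^2N,t\sqrt N\}$ I would localise, and I expect this to be the main obstacle. The idea is to apply the chaining bound of \Cref{thm:main_detailed} to the \emph{centered increments} $(\pl-\epl)(\theta)-(\pl-\epl)(\emms)$ over the ball $B_\rho(\emms)$. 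On that ball the effective diameter is $\rho$ instead of $D$, and the summands have sup-norm $\lesssim \lip\rho$ by the Lipschitz property of $\log(\tfrac1M p^\theta+\tfrac{M-1}{M})$. I will need to check this uniformly in $M$, after the centring that removes the $M^2$ scaling. The deviation over a shell at radius $\rho\approx t$ must then exceed $\tau t^2/8$. With the increments scaled by $t$, both exponents gain a factor $1/t$ and give $t^2N$ and $t\sqrt N$ respectively.

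A peeling argument over dyadic shells $2^jt\le\|\theta-\emms\|\le 2^{j+1}t$ sums to a constant multiple of the first shell's bound. The far region $\|\theta-\emms\|\ge r/2$ is handled by Step 2 with the fixed gap $\eta$; it yields $\exp(-cN)$, which is absorbed for $t\le t_0$. Throughout, all constants depend only on $U,\lip,\lip',D,J,\tau,r$, so they are independent of $M$ and $N$.
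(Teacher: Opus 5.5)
Your argument is correct, but it reaches the linear rate by a different mechanism than the paper. The paper never localizes the value process. It uses \Cref{thm:main_detailed} only at a fixed level $\delta/2$, to place $\mms$ inside the ball where $\nabla^2\epl\succeq\frac{\tau}{2}I$. There it uses the first-order condition $\nabla\pl(\mms)=0$ and integrates the Hessian along $[\emms,\mms]$. This gives the deterministic bound $\|\mms-\emms\|_2\le\frac{2}{\tau}\sup_\theta\|\nabla\pl(\theta)-\nabla\epl(\theta)\|_2$. The linear rate then comes from a separate uniform concentration result for the gradient (\Cref{thm:grad_bound}, built from \Cref{lem:2fa}--\Cref{lem:2fc_fix}, i.e.\ the lemmas of \Cref{sec:concentration} redone with $\partial_l p^\theta$ in place of $q^\theta$). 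Your zeroth-order peeling argument dispenses with that whole gradient section and with the first-order condition at $\mms$, so it would also cover boundary or approximate minimisers. It needs only the value-process estimates plus a localized entropy bound. The paper's route instead buys a clean deterministic reduction with no squaring loss, so the global chaining bound suffices.

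When you write out Step 3, do not rely on the claim that the per-batch increments have sup-norm $\lesssim\lip\rho$ uniformly in $M$. That claim is false for the $f_a$ part: $\frac1M\sum_{i\neq j}(q^1_{ij}(\theta)-q^1_{ij}(\theta'))$ is in general only bounded by $(M-1)\lip\|\theta-\theta'\|$, and centring by the mean does not change this. What holds, and what you need, is the $M$-uniform $\psi_1$ bound from \Cref{lem:fa,lem:fb,lem:fc}: $G:=\pl-\epl$ satisfies $\|G(\theta)-G(\theta')\|_{\psi_1}\le\frac{C}{\sqrt N}\|\theta-\theta'\|$ with $C\lesssim\lip(1+U)$, and this restricts verbatim to any ball. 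Applying \Cref{thm:Dudley} to $G-G(\emms)$ on $B_\rho(\emms)$ needs no fixed-point Bernstein term, since this process vanishes at the centre. By scale invariance, $\int_0^{2\rho}\log(1+N(\varepsilon;B_\rho,\|\cdot\|))\diff\varepsilon=c_d\,\rho$, so the prefactor is a dimensional constant. With threshold $\frac{\tau}{4}4^jt^2$ on $B_{2^{j+1}t}(\emms)$, the shell contributes $\lesssim\exp(-c\,2^jt\sqrt N)$. The sum over shells is $\lesssim e^{-ct\sqrt N}$ when $t\sqrt N\ge 1$, while for $t\sqrt N<1$ the claimed bound is trivial once $C_1\ge e^{C_2}$; since $\min\{t^2N,t\sqrt N\}\le t\sqrt N$, this suffices. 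Finally, the far region yields $e^{-c\sqrt N}$ rather than $e^{-cN}$, because the chaining term in \Cref{thm:main_detailed} dominates. This is still absorbed for $t\le t_0$, exactly as in the paper.
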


\section{Preliminaries}\label{sec:theory}

\begin{theorem}[Differentiability of parametrised integrals \protect{\cite[Theorem 3.18]{amann2009analysis}}]\label{thm:interchange}
Suppose $\Theta$ is open in $\mathbb{R}^n$, and suppose $f : \mathbb{X}\times \Theta \to \mathbb{R}$ satisfies
\begin{enumerate}
    \item $f(\cdot, \theta) \in L^1_{\mu}(\mathbb{X})$ for every $\theta\in\Theta$;
    \item $f(x,\cdot) \in C^1(\Theta)$ for $\mu$-almost every $x\in\mathbb{X}$;
    \item there exists some $g\in L^1_\mu(\mathbb{X})$ such that
        \begin{align*}
            \Big|\frac{\partial}{\partial \theta_m} f(x,\theta)\Big| \leq g(x) \qquad \text{for all }(x,\theta)\in \mathbb{X}\times\Theta \quad \text{and} \quad m \in \{1,\ldots,n\}.
        \end{align*}
\end{enumerate}
Then $\theta \mapsto \int_\mathbb{X} f(x,\theta) \diff \mu$ is continuously differentiable and
\begin{align*}
    \frac{\partial}{\partial \theta_m}\int_\mathbb{X} f(x,\theta) \diff \mu(x) = \int_\mathbb{X} \frac{\partial}{\partial \theta_m} f(x,\theta) \diff \mu(x) \qquad \text{for all } \theta \in \Theta \quad \text{and} \quad m \in \{1,\ldots,n\}.
\end{align*}
\end{theorem}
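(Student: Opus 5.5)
The plan is the standard one: a mean value argument on difference quotients, followed by dominated convergence with the majorant $g$. Fix $\theta\in\Theta$ and an index $m\in\{1,\ldots,n\}$, and let $e_m$ denote the $m$-th unit vector. Since $\Theta$ is open, there is $\delta>0$ with $\theta+he_m\in\Theta$ for all $\abs{h}<\delta$. Let $\mathcal{N}\subset\mathbb{X}$ be the $\mu$-null set outside of which $f(x,\cdot)\in C^1(\Theta)$. The point to note is that $\mathcal{N}$ does not depend on $\theta$, so a single null set serves every sequence $h_k\to0$ and every sequence $\theta_k\to\theta$ considered below.

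\textbf{Step 1 (existence of the partial derivative and the interchange).} For $x\notin\mathcal{N}$ and $0<\abs{h}<\delta$, the mean value theorem applied to $s\mapsto f(x,\theta+se_m)$ gives some $\xi=\xi(x,h)\in(0,1)$ with
\begin{align*}
    q_h(x):=\frac{f(x,\theta+he_m)-f(x,\theta)}{h}=\frac{\partial}{\partial\theta_m}f(x,\theta+\xi he_m).
\end{align*}
Hence $\abs{q_h(x)}\le g(x)$ by assumption 3. Each $q_h$ is $\mu$-measurable and integrable by assumption 1. Moreover $q_h(x)\to\frac{\partial}{\partial\theta_m}f(x,\theta)$ as $h\to0$ for $x\notin\mathcal{N}$. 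The limit $\frac{\partial}{\partial\theta_m}f(\cdot,\theta)$ is therefore measurable, being an a.e.\ limit along a sequence $h_k\to0$, and it is dominated by $g$. Dominated convergence, applied along an arbitrary sequence $h_k\to0$, then yields
\begin{align*}
    \frac{1}{h_k}\Big(\int f(x,\theta+h_ke_m)\diff\mu-\int f(x,\theta)\diff\mu\Big)\to\int\frac{\partial}{\partial\theta_m}f(x,\theta)\diff\mu .
\end{align*}
So the partial derivative of $F(\theta):=\int_\mathbb{X}f(x,\theta)\diff\mu(x)$ exists and equals the integral of the partial derivative.

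\textbf{Step 2 (continuity of the partials).} Let $\theta_k\to\theta$ in $\Theta$. For $x\notin\mathcal{N}$, continuity of $\frac{\partial}{\partial\theta_m}f(x,\cdot)$ gives pointwise convergence of $\frac{\partial}{\partial\theta_m}f(x,\theta_k)$ to $\frac{\partial}{\partial\theta_m}f(x,\theta)$. The uniform majorant $g$ works again, so dominated convergence shows $\frac{\partial}{\partial\theta_m}F(\theta_k)\to\frac{\partial}{\partial\theta_m}F(\theta)$. All partial derivatives of $F$ therefore exist and are continuous on the open set $\Theta$. By the classical criterion, continuous partials imply (Fréchet) continuous differentiability, so $F\in C^1(\Theta)$.

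\textbf{Main obstacle.} There is no serious obstacle; the only delicate points are bookkeeping. First, the mean value point $\xi(x,h)$ depends on $x$, and measurability of $x\mapsto\xi(x,h)$ is never needed: we only use the inequality $\abs{q_h}\le g$, and $q_h$ itself is measurable as a difference of measurable functions. Second, the exceptional null set must be chosen once, independently of $\theta$ and $h$, which assumption 2 as stated allows. Third, the domination in assumption 3 must hold uniformly in $\theta$ along the whole segment from $\theta$ to $\theta+he_m$. This is exactly why $h$ is restricted so that the segment stays inside the open set $\Theta$.
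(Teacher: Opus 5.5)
Your proof is correct and is the standard argument: the mean value theorem dominates the difference quotients by $g$, dominated convergence gives the interchange, dominated convergence again gives continuity of the partials, and continuous partials on an open set imply $C^1$. The paper does not prove this result itself but cites Amann--Escher, whose proof follows essentially this same route.
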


\begin{definition}[$\psi_1$-norm]\label{def:psi_norm}\cite[eqn. (2.21)]{vershynin2018high}
    The sub-exponential norm of a real valued random variable $F$ is defined by
    \begin{align*}
        \norm{F}_{\psi_1} = \inf \left\{t > 0 : \E \exp\Big(\frac{\abs{F}}{t}\Big) \leq 2\right\}.
    \end{align*}
\end{definition}

\begin{lemma}\label{lem:psi_norm}
    Given a random variable $F$ with $\E(\abs{F}^p) \leq (Cp)^p$ for all $p\in\N$, we have $\norm{F}_{\psi_1} \leq 2eC$.
\end{lemma}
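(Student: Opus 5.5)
The plan is to bound $\E\exp(|F|/t)$ through its power series and to choose $t$ so that the resulting geometric series sums to at most $1$. By monotone convergence (Tonelli for nonnegative terms),
\begin{align*}
    \E\exp\Big(\frac{\abs{F}}{t}\Big) = 1 + \sum_{p=1}^\infty \frac{\E\abs{F}^p}{t^p\, p!} \leq 1 + \sum_{p=1}^\infty \frac{(Cp)^p}{t^p\, p!},
\end{align*}
where the inequality uses the moment hypothesis for every $p\in\N$.

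The key step is to control $p^p/p!$. We use the elementary bound $p! \geq (p/e)^p$, which follows from $e^p = \sum_k p^k/k! \geq p^p/p!$. This gives $p^p/p! \leq e^p$, and therefore
\begin{align*}
    \E\exp\Big(\frac{\abs{F}}{t}\Big) \leq 1 + \sum_{p=1}^\infty \Big(\frac{eC}{t}\Big)^p.
\end{align*}

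Now take $t = 2eC$, assuming $C>0$; if $C=0$ then $F=0$ almost surely and the claim is trivial. The ratio becomes $1/2$, so the geometric series sums to $\sum_{p\ge1}2^{-p}=1$. Hence $\E\exp(\abs{F}/(2eC)) \leq 2$, and by the definition of the $\psi_1$-norm as an infimum, $\norm{F}_{\psi_1}\leq 2eC$.

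There is no real obstacle here. The only points needing care are the interchange of expectation and sum, which is justified because all terms are nonnegative, and the Stirling-type lower bound on $p!$.
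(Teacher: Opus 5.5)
Your proposal is correct and follows the same route as the paper: expand $\E\exp(\abs{F}/t)$ as a power series, bound the moments, use $p!\geq (p/e)^p$ to reduce to a geometric series, and set $t=2eC$. Separating the $p=0$ term, justifying the interchange by nonnegativity, and treating $C=0$ separately are small improvements in care over the paper's version.
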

\begin{proof}
Substitute $t = 2eC$ into the following equation
    \begin{align*}
        \E \exp\Big(\frac{\abs{F}}{t}\Big)
        &=
        \E \left(\sum_{p = 0}^\infty \frac{\abs{F}^p}{t^p p!}\right)
        \leq
        \sum_{p=0}^\infty \frac{C^p p^p}{t^p p!}
        \leq 
        \sum_{p=0}^\infty \frac{C^p e^p}{t^p}
        =
        \frac{1}{1-\frac{Ce}{t}}
    \end{align*}
    where we used $(p/e)^p \leq p!$ as well as the limit of the geometric series.
\end{proof}

\begin{theorem}[Bernstein, Corollary 2.8.3 in \cite{vershynin2018high}]\label{thm:Bernstein}
    Let $F_1,..., F_N$ be i.i.d.~sub-exponential random variables with mean zero, then for any $t\geq 0$  we have
    \begin{align*}
        \Pb\left(\abs{\frac{1}{N}\sum_{i = 1}^N F_i} \geq t\right) 
        \leq
        2\exp\left(
        -c\min\left\{
            \frac{t^2 N}{\norm{F_1}_{\psi_1}^2}, 
            \frac{tN}{\norm{F_1}_{\psi_1}}
        \right\}
        \right)
    \end{align*}
    in which $c$ is an absolute constant.
\end{theorem}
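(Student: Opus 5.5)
The plan is the standard Chernoff argument, following the proof of \cite[Theorem 2.8.1]{vershynin2018high}. First I bound the moment generating function of a single mean-zero sub-exponential variable. Then I tensorise over the $N$ independent copies, apply Markov's inequality to the exponential, and optimise over the exponent. Throughout, write $K := \norm{F_1}_{\psi_1}$. By homogeneity I may assume $K = 1$: replace $F_i$ by $F_i/K$ and $t$ by $t/K$, and check that the final bound scales as claimed.

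\textbf{Step 1: moment bounds.} From \Cref{def:psi_norm} with $K=1$ we have $\E \exp(\abs{F_1}) \leq 2$. Since $\exp(x) \geq x^p/p!$ for $x \geq 0$, this gives
\begin{align*}
\E\abs{F_1}^p \leq 2\, p! \leq 2 p^p \qquad \text{for all } p \in \N .
\end{align*}
This is a partial converse to \Cref{lem:psi_norm}.

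\textbf{Step 2: MGF bound near the origin.} I expand
\begin{align*}
\E \exp(\lambda F_1) = 1 + \lambda \E F_1 + \sum_{p\geq 2} \frac{\lambda^p \E F_1^p}{p!} .
\end{align*}
The linear term vanishes because the mean is zero. By Step 1, each remaining term is at most $2\abs{\lambda}^p$. For $\abs{\lambda} \leq 1/2$ the geometric series therefore gives
\begin{align*}
\E\exp(\lambda F_1) \leq 1 + 4\lambda^2 \leq \exp(4\lambda^2).
\end{align*}
This step is the only genuinely delicate point. The mean-zero assumption is essential to remove the linear term, and the restriction $\abs{\lambda} \lesssim 1/K$ is exactly what produces the second, linear regime $tN/K$ in the minimum. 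I would not try to track sharp constants; they are all absorbed into the absolute constant $c$.

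\textbf{Step 3: Chernoff bound and optimisation.} Let $S = \sum_{i=1}^N F_i$ and fix $0 \leq \lambda \leq 1/2$. Markov's inequality and independence give
\begin{align*}
\Pb(S \geq tN) \leq e^{-\lambda t N} \prod_{i=1}^N \E e^{\lambda F_i} \leq \exp\bigl(-\lambda t N + 4\lambda^2 N\bigr).
\end{align*}
I choose $\lambda = \min\{t/8,\ 1/2\}$. If $t/8 \leq 1/2$, the exponent is $-t^2N/16$. Otherwise $\lambda = 1/2$ and $t > 4$, so $4\lambda^2 = 1 \leq t/4$ and the exponent is at most $-tN/4$. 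In both cases $\Pb(S \geq tN) \leq \exp(-c\min\{t^2N, tN\})$ for some absolute constant $c>0$.

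Applying the same argument to $-F_i$, which are also mean zero with the same $\psi_1$-norm, bounds the lower tail. A union bound over the two tails gives the factor $2$. Undoing the normalisation $K=1$ yields exactly the stated bound with $t^2N/K^2$ and $tN/K$.
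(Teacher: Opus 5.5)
Your proof is correct and is the standard moment-bound, MGF and Chernoff argument behind \cite[Theorem 2.8.1]{vershynin2018high}; the paper does not prove this statement at all but only cites Vershynin, so your approach coincides with the intended source. Two small details are worth stating explicitly: $\E\exp(|F_1|/K)\le 2$ holds at the infimum $K>0$ itself by monotone convergence (the case $K=0$ is trivial), and exchanging $\E$ with the Taylor series in Step 2 is justified by dominated convergence, since $\exp(|\lambda F_1|)\le \exp(|F_1|)$ for $|\lambda|\le 1$.
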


\begin{lemma}\label{lem:psi_norm_iid}
    Let $F_1,..., F_N$ be i.i.d.~sub-exponential random variables with mean zero, then for some absolute constant $c > 0$ we have
    \begin{align*}
        \norm{\frac{1}{N}\sum_{i = 1}^N F_i}_{\psi_1} \leq \frac{c \norm{F_1}_{\psi_1}}{\sqrt{N}}.
    \end{align*}
\end{lemma}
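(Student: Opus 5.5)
The plan is to compare moments rather than work with the $\psi_1$-norm directly, and then appeal to \Cref{lem:psi_norm}. First normalise: set $K := \norm{F_1}_{\psi_1}$. By homogeneity of the $\psi_1$-norm it suffices to treat $K = 1$, i.e.\ to show $\norm{S_N}_{\psi_1} \le c/\sqrt{N}$ for $S_N := \frac{1}{N}\sum_{i=1}^N F_i$.

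The key input is a moment bound for $S_N$, which I would obtain from the Bernstein tail bound \Cref{thm:Bernstein}. With $K=1$ that tail reads
\begin{align*}
\Pb(\abs{S_N} \ge t) \le 2\exp\bigl(-c\min\{t^2N, tN\}\bigr).
\end{align*}
Integrating it gives
\begin{align*}
\E\abs{S_N}^p = \int_0^\infty p t^{p-1}\,\Pb(\abs{S_N}\ge t)\diff t \le 2p\int_0^\infty t^{p-1}\bigl(e^{-ct^2N} + e^{-ctN}\bigr)\diff t,
\end{align*}
since $e^{-\min\{a,b\}} \le e^{-a}+e^{-b}$.

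The two resulting integrals are Gamma functions. The Gaussian part is
\begin{align*}
p\int_0^\infty t^{p-1}e^{-ct^2N}\diff t = \tfrac{p}{2}\,\Gamma(p/2)\,(cN)^{-p/2},
\end{align*}
which is at most $(C\sqrt{p/N})^p \le (Cp/\sqrt N)^p$. The exponential part is
\begin{align*}
p\,\Gamma(p)\,(cN)^{-p} = p!\,(cN)^{-p} \le (Cp/N)^p \le (Cp/\sqrt N)^p,
\end{align*}
using $N \ge 1$. Combining the two and absorbing the factor $2 \le 2^p$ into the constant yields $\E\abs{S_N}^p \le (C'p/\sqrt N)^p$ for all $p\in\N$. \Cref{lem:psi_norm} then gives $\norm{S_N}_{\psi_1} \le 2eC'/\sqrt N$, which is the claim.

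The only delicate step is the Gamma-function estimate: it needs $\Gamma(p/2)\le (p/2)^{p/2}$ and $p! \le p^p$, together with the use of $N\ge1$ to dominate the $1/N$ rate by $1/\sqrt N$. Everything else is routine. As an alternative route, one could cite the sub-exponential Hoeffding/Bernstein-type norm inequality directly; the tail-integration argument, however, uses only results already stated in the paper.
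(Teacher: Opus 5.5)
Your proposal is correct and follows essentially the same route as the paper: apply \Cref{thm:Bernstein}, integrate the tail against $pt^{p-1}$ using $e^{-\min\{a,b\}}\le e^{-a}+e^{-b}$, bound the resulting Gamma integrals (using $N\ge 1$ to dominate the $1/N$ term by $1/\sqrt N$), and conclude with \Cref{lem:psi_norm}; your normalisation to $\norm{F_1}_{\psi_1}=1$ is only cosmetic. One minor slip: $\Gamma(p/2)\le (p/2)^{p/2}$ fails at $p=1$ because $\Gamma(1/2)=\sqrt{\pi}$, but this is harmless, since $\tfrac p2\Gamma(\tfrac p2)=\Gamma(\tfrac p2+1)\le p^{p/2}$ (or the paper's $\Gamma(p/2)\le 2\Gamma(p)$) gives the needed bound with an absolute constant.
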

\begin{proof}
    Denote $\beta := \norm{F_1}_{\psi_1}$. For any $p \geq 1$, by \Cref{thm:Bernstein} 
    \begin{align*}
        \E\left(\abs{\frac{1}{N}\sum_{i=1}^N F_i}^p\right)
        &= 
        \int_{0}^\infty \Pb\left(\abs{\frac{1}{N}\sum_{i = 1}^N F_i}^p \geq t\right) \diff t
        \\ &=
        p\int_{0}^\infty \Pb\left(\abs{\frac{1}{N}\sum_{i = 1}^N F_i} \geq t\right) t^{p-1} \diff t
        \\ &\leq
        2p\int_{0}^\infty t^{p-1} \exp\Big(-\frac{cNt^2}{\beta^2}\Big) \diff t
        +
        2p\int_{0}^\infty t^{p-1} \exp\Big(-\frac{cNt}{\beta}\Big) \diff t
        \\ & \leq
        2p \Big(\frac{cN}{\beta^2}\Big)^{-p/2} \Gamma\Big(\frac{p}{2}\Big) + 2p \Big(\frac{cN}{\beta}\Big)^{-p}\Gamma(p)
        \\ & \leq
        6p\Gamma(p)\Big(\frac{\beta}{c \sqrt{N}}\Big)^p \leq \Big(\frac{6\beta p}{c \sqrt{N}}\Big)^p.
    \end{align*}
    In the last line we used that the $c$ from \Cref{thm:Bernstein} w.l.o.g.~fulfills $c \leq 1$ and that for $p \geq 1$ we have $\Gamma(p/2) \leq 2 \Gamma(p)$ as well as $p \Gamma(p) = p! \leq p^p$.
    Applying \Cref{lem:psi_norm} finishes the proof.
\end{proof}

\begin{definition}[$\psi_1$-process, Definition 5.35 in \cite{wainwright2019high}]\label{def:psi-process} A zero mean stochastic process $\{F_\theta: \theta \in \Theta\}$ is a $\psi_1$ process with respect to a norm $\norm{\cdot}_\Theta$ if
\begin{align*}
    \norm{F_\theta - F_{\theta'}}_{\psi_1} \leq \norm{\theta - \theta'}_\Theta.
\end{align*}
\end{definition}

\begin{lemma}\label{lem:trivial}
    Consider a parametric class of functions $g_\theta : \mathbb{X}\to \R$, a probability measure $\rho \in \mathcal{P}(\mathbb{X})$, and some i.i.d.~$X_k\sim \rho$. 
    If there exists an $L>0$ such that for any $\theta,\theta'\in \Theta$ one has $\norm{g^\theta - g^{\theta'}}_\infty \leq L\norm{\theta - \theta'}_\Theta$, then
    \begin{align*}
        F_N(\theta) := \frac{1}{N}\sum_{k=1}^N g^\theta(X_k) - \E g^\theta(X)
    \end{align*}
    is a $\psi_1$-process with respect to the norm $\frac{cL}{\sqrt{N}}\norm{\cdot}_\Theta$ for an absolute constant $c > 0$.
\end{lemma}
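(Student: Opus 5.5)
The idea is to write the increment of the process as a normalised sum of i.i.d.\ centred bounded variables and then apply \Cref{lem:psi_norm_iid}. Fix $\theta,\theta'\in\Theta$ and set
\begin{align*}
    F_k := \big(g^\theta(X_k) - g^{\theta'}(X_k)\big) - \E\big(g^\theta(X) - g^{\theta'}(X)\big), \qquad k=1,\dots,N.
\end{align*}
Then $F_N(\theta) - F_N(\theta') = \frac{1}{N}\sum_{k=1}^N F_k$. The $F_k$ are i.i.d.\ because they are the same measurable function applied to the i.i.d.\ $X_k$, and they have mean zero. Each $F_k$ is also bounded:
\begin{align*}
    \abs{F_k} \leq 2\norm{g^\theta - g^{\theta'}}_\infty \leq 2L\norm{\theta-\theta'}_\Theta .
\end{align*}
Since the $F_k$ are mean zero, $F_N$ is a zero-mean process, as \Cref{def:psi-process} requires.

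Next I would bound the $\psi_1$-norm of a single term. For a random variable with $\abs{F}\leq B$ almost surely, $\E\exp(\abs{F}/t)\leq e^{B/t}\leq 2$ as soon as $t\geq B/\log 2$. Hence $\norm{F_1}_{\psi_1}\leq B/\log 2 \leq 2B$. Alternatively, \Cref{lem:psi_norm} with $C=B$ gives $\norm{F_1}_{\psi_1} \leq 2eB$, since $\E\abs{F}^p\leq B^p\leq (Bp)^p$. Either way, $\norm{F_1}_{\psi_1}\leq c_0 L\norm{\theta-\theta'}_\Theta$ with an absolute constant $c_0$. The degenerate case $\theta=\theta'$ is trivial, because then $F_1=0$.

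Finally, applying \Cref{lem:psi_norm_iid} yields
\begin{align*}
    \norm{F_N(\theta)-F_N(\theta')}_{\psi_1} \leq \frac{c'\norm{F_1}_{\psi_1}}{\sqrt N} \leq \frac{c' c_0 L}{\sqrt N}\norm{\theta-\theta'}_\Theta .
\end{align*}
This is exactly the $\psi_1$-process property with respect to the norm $\frac{cL}{\sqrt N}\norm{\cdot}_\Theta$, where $c=c'c_0$. No step is a real obstacle. The only point needing care is the sub-exponential hypothesis in \Cref{lem:psi_norm_iid}, which the boundedness of the $F_k$ makes immediate; everything else is bookkeeping of absolute constants.
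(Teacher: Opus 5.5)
Your proposal is correct and follows essentially the same route as the paper. Both bound the centred single-sample increment by $2L\norm{\theta-\theta'}_\Theta$, convert this to a $\psi_1$ bound via \Cref{lem:psi_norm}, and then apply \Cref{lem:psi_norm_iid} to obtain the $1/\sqrt{N}$ factor.
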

\begin{proof}
    Note that $\E (\abs{F_1(\theta) - F_1(\theta')}^p) \leq (2L\norm{\theta - \theta'})^p$, apply \Cref{lem:psi_norm} and \Cref{lem:psi_norm_iid}. 
\end{proof}

\begin{definition}[Generalized Dudley entropy integral]\label{def:Dudley} Take $D$ from \Cref{def:candidate}. We define
\begin{align*}
    \mathcal{J}(\delta, D) = \int_\delta^D \log\Big(1 + N(\varepsilon; \Theta, \norm{\cdot}_\Theta)\Big) \diff\varepsilon
\end{align*}
where $N(\varepsilon; \Theta, \norm{\cdot}_\Theta)$ is the $\varepsilon$-covering number for $(\Theta, \norm{\cdot}_\Theta)$.
\end{definition}

\begin{theorem}[Theorem 5.36 in \cite{wainwright2019high}]\label{thm:Dudley}
    Let $\{F_\theta : \theta \in \Theta\}$ be a $\psi_1$-process with respect to $\norm{\cdot}_\Theta$, then there is a universal constant $c_1$ such that for all $t\in \R$ we have
    \begin{align*}
        \Pb\left(\sup_{\theta,\theta' \in \Theta} \abs{F_\theta - F_{\theta'}} \geq c_1 (\mathcal{J}(0, D) + t)\right) \leq 2\exp\Big(-\frac{t}{D}\Big).
    \end{align*}
\end{theorem}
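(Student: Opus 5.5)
The plan is the standard chaining argument of Dudley type. It is adapted to sub-exponential increments, which is why the entropy integral in \Cref{def:Dudley} contains $\log(1+N)$ and not $\sqrt{\log N}$.

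We will assume, as is implicit in \cite{wainwright2019high}, that the process is separable. Then it suffices to bound the supremum over an arbitrary finite subset $T\subset\Theta$ by a bound independent of $T$, and pass to a countable dense set by monotone convergence.

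\textbf{Step 1 (nets and chains).} For $k\geq 0$ set $\varepsilon_k := D2^{-k}$. Let $T_k\subset\Theta$ be an $\varepsilon_k$-net of minimal cardinality $N_k := N(\varepsilon_k;\Theta,\norm{\cdot}_\Theta)$, with $T_0$ a single point. Let $\pi_k(\theta)\in T_k$ be a nearest point to $\theta$. Since $T$ is finite, $\pi_K(\theta)=\theta$ for all $\theta\in T$ once $K$ is large, or we may add $T$ itself as the last level. We then telescope:
\begin{align*}
F_\theta - F_{\pi_0(\theta)} = \sum_{k=1}^K \big(F_{\pi_k(\theta)} - F_{\pi_{k-1}(\theta)}\big).
\end{align*}
Because $\pi_0$ is constant, $\sup_{\theta,\theta'}\abs{F_\theta-F_{\theta'}}\leq 2\sup_\theta\sum_k \abs{F_{\pi_k(\theta)}-F_{\pi_{k-1}(\theta)}}$. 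Each link satisfies $\norm{\pi_k(\theta)-\pi_{k-1}(\theta)}_\Theta\leq \varepsilon_k+\varepsilon_{k-1}=3\varepsilon_k$. At level $k$ there are at most $N_kN_{k-1}\leq N_k^2$ distinct links.

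\textbf{Step 2 (tail of each link).} By \Cref{def:psi-process}, each link $Z$ has $\norm{Z}_{\psi_1}\leq 3\varepsilon_k$. Markov's inequality applied to $\exp(\abs{Z}/\norm{Z}_{\psi_1})$ gives $\Pb(\abs{Z}\geq u\norm{Z}_{\psi_1})\leq 2e^{-u}$. At level $k$ we take $u_k := 2\log(1+N_k) + k + t/D$. A union bound gives
\begin{align*}
\Pb\Big(\exists\text{ level-}k\text{ link with } \abs{Z}\geq 3\varepsilon_k u_k\Big)\leq 2N_k^2 e^{-2\log(1+N_k)}e^{-k}e^{-t/D}\leq 2e^{-k}e^{-t/D}.
\end{align*}
Summing over $k\geq1$ gives total failure probability at most $2e^{-t/D}\sum_k e^{-k}\leq 2e^{-t/D}$.

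\textbf{Step 3 (summing the bound).} Outside this event, for every $\theta$,
\begin{align*}
\sum_k 3\varepsilon_k u_k = 6\sum_k \varepsilon_k\log(1+N_k) + 3\sum_k k\varepsilon_k + \frac{3t}{D}\sum_k\varepsilon_k .
\end{align*}
We control the three terms as follows:
\begin{itemize}
\item Since $\varepsilon_k = 2(\varepsilon_k-\varepsilon_{k+1})$ and $N(\cdot)$ is nonincreasing, the first sum is bounded by $12\int_0^{D}\log(1+N(\varepsilon))\diff\varepsilon = 12\,\mathcal{J}(0,D)$.
\item The second sum equals $3D\sum_k k2^{-k} = 6D$. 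Because $N(\varepsilon)\geq 1$, we have $\mathcal{J}(0,D)\geq D\log 2$, so this term is also absorbed into a constant multiple of $\mathcal{J}(0,D)$.
\item The third sum is at most $3t$.
\end{itemize}
Multiplying by the factor $2$ from Step 1 yields $\sup\abs{F_\theta-F_{\theta'}}\leq c_1(\mathcal{J}(0,D)+t)$ outside an event of probability at most $2e^{-t/D}$. For $t<0$ the right-hand side of the claimed bound exceeds $1$, so the claim is trivial there.

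\textbf{Main obstacle.} The main difficulty is the bookkeeping in the choice of $u_k$. It must simultaneously absorb the union bound, which requires $\log N_k^2$, and make the level probabilities summable, which requires the extra $k$. It must also keep $t$ entering only linearly, with the combined coefficient $\sum_k\varepsilon_k/D$ bounded, so that the bound comes out in the form $\exp(-t/D)$. The measurability and separability reduction is routine but has to be stated.
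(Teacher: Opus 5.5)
The paper gives no proof of this statement: it imports it from Theorem 5.36 of \cite{wainwright2019high} and uses it as a black box in \Cref{lem:Dudley_iid}. Your dyadic chaining argument is a correct self-contained replacement. The thresholds $u_k=2\log(1+N_k)+k+t/D$ pay for the union bound and make the levels summable, and the bookkeeping in Step 3 checks out, giving an explicit constant (about $24+12/\log 2$ with your choices). Putting $t/D$ into every level, together with $\sum_k\varepsilon_k\le D$, is what produces the tail $\exp(-t/D)$. A bound on $\norm{\sup\abs{F_\theta-F_{\theta'}}}_{\psi_1}$ in terms of $\mathcal{J}(0,D)$ alone would only give the weaker $\exp(-t/(c\,\mathcal{J}(0,D)))$.

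Writing the proof out also exposes two hypotheses the paper's statement leaves implicit. The first is separability, which is harmless in the applications because the processes involved are continuous in $\theta$ for every sample. The second is that $D$ must be the diameter of $\Theta$ in $\norm{\cdot}_\Theta$, not the Euclidean $D$ of \Cref{def:candidate}. You need this for $T_0$ to be a single point, and it is how the paper actually applies the result, with $D'=DC/\sqrt N$.

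One step needs repair. With $T_k$ chosen as minimal nets of $\Theta$, it is false that $\pi_K(\theta)=\theta$ for $\theta\in T$ once $K$ is large, since $T_K$ need not contain any point of $T$. Your fallback of ``adding $T$ as the last level'' creates up to $\abs{T}$ links at that level, which your count $N_kN_{k-1}\le N_k^2$ does not cover. There are two easy fixes:
\begin{enumerate}
\item Take $T_k\subset T$ to be $\varepsilon_k$-nets of $T$. Then $\abs{T_k}\le N(\varepsilon_k/2;\Theta,\norm{\cdot}_\Theta)$, which only changes the constant in front of $\mathcal{J}(0,D)$, and $T_K=T$ as soon as $\varepsilon_K$ is below the minimal separation of $T$.
\item Keep nets of $\Theta$ and treat the final links $F_\theta-F_{\pi_K(\theta)}$ separately, with threshold $\varepsilon_K(\log\abs{T}+K+t/D)$ at extra failure probability $2e^{-K}e^{-t/D}$. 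This contribution vanishes as $K\to\infty$ for fixed finite $T$, and $\frac{1}{e-1}+e^{-K}<1$ keeps the total failure probability below $2e^{-t/D}$.
\end{enumerate}
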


\begin{lemma}\label{lem:Dudley_iid}
    Take $D$ from \Cref{def:candidate}, and let $\norm{\cdot}$ denote the standard Euclidean norm. Define
    \begin{align*}
        J := \int_0^D \log\left(1+ N(\varepsilon;\Theta, \norm{\cdot})\right) \diff\varepsilon.
    \end{align*}
    Let $f(\theta)$ be a $\psi_1$-process w.r.t.~$\frac{C}{\sqrt{N}}\norm{\cdot}$ for some $C>0$.
    There is an absolute constant $c_1>0$ such that for all $t > 0$
    \begin{align*}
        \Pb\left(\sup_{\theta,\theta'\in\Theta}\abs{f(\theta) - f(\theta')} \geq t\right) \leq 2\exp\left(\frac{J}{D}\right)\exp\left(-\frac{t\sqrt{N}}{c_1DC}\right).
    \end{align*}
\end{lemma}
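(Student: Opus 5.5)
This follows from \Cref{thm:Dudley} by rescaling the norm and then converting the affine tail bound into a purely exponential one. The process $f$ is a $\psi_1$-process with respect to $\norm{\cdot}_\Theta := \frac{C}{\sqrt{N}}\norm{\cdot}$, so \Cref{thm:Dudley} applies in this rescaled metric.

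\textbf{Step 1: the rescaled quantities.} The diameter of $\Theta$ in the new norm is $D' = \frac{C}{\sqrt{N}}D$. Covering numbers transform as
\begin{align*}
N(\varepsilon;\Theta,\tfrac{C}{\sqrt{N}}\norm{\cdot}) = N(\varepsilon\sqrt{N}/C;\Theta,\norm{\cdot}).
\end{align*}
Substituting $\varepsilon = \frac{C}{\sqrt{N}}\varepsilon'$ in the entropy integral gives $\mathcal{J}'(0,D') = \frac{C}{\sqrt{N}} J$.

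\textbf{Step 2: apply \Cref{thm:Dudley}.} For every $s$,
\begin{align*}
\Pb\Big(\sup_{\theta,\theta'}\abs{f(\theta)-f(\theta')} \geq c_1\big(\tfrac{C}{\sqrt{N}}J + s\big)\Big) \leq 2\exp\Big(-\tfrac{s\sqrt{N}}{CD}\Big).
\end{align*}

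\textbf{Step 3: reparametrize by the threshold.} Given $t>0$, set $s = t/c_1 - \frac{C}{\sqrt{N}}J$, so that the threshold $c_1(\frac{C}{\sqrt N}J + s)$ equals $t$. Then
\begin{align*}
\frac{s\sqrt{N}}{CD} = \frac{t\sqrt{N}}{c_1 CD} - \frac{J}{D},
\end{align*}
and the right-hand side of Step 2 becomes
\begin{align*}
2\exp\Big(\tfrac{J}{D}\Big)\exp\Big(-\tfrac{t\sqrt{N}}{c_1 DC}\Big),
\end{align*}
which is exactly the claimed bound.

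\textbf{Where care is needed.} The only delicate point is that $s$ may be negative. \Cref{thm:Dudley} is stated for all $t\in\R$, so this is formally fine. If one prefers to avoid relying on that, note that whenever $s\leq 0$ the right-hand side is at least $2$, so the inequality holds trivially. Beyond this, one only needs to check the change of variables in the entropy integral carefully, including that both the integrand and the upper limit scale consistently.
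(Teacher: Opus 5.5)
Your proposal is correct and follows the paper's proof essentially step for step. You rescale to the norm $\frac{C}{\sqrt{N}}\norm{\cdot}$, compute $D' = DC/\sqrt{N}$ and $\mathcal{J}(0,D') = CJ/\sqrt{N}$ by a change of variables, apply \Cref{thm:Dudley}, and substitute $t = c_1(CJ/\sqrt{N} + s)$. Your extra remark on negative $s$ is a harmless refinement.
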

\begin{proof}
    First note that since $\Bar{\Theta}
    $ is a compact subspace of finite dimensional Euclidean space, we have $J < \infty$ by \cite[Proposition 5]{cucker2002mathematical}.
    Let 
    $D':= \sup_{\theta,\theta'\in\Theta}\frac{C}{\sqrt{N}}\norm{\theta-\theta'} = \frac{DC}{\sqrt{N}}$.
    From \Cref{def:Dudley}, using a change of variables in the integral we get
    \begin{align}\label{eqn:iidJ}
        \mathcal{J}(0,D') 
        &= 
        \int_0^{D'} 
        \log\left(1+N\Big(\varepsilon;\Theta, \frac{C}{\sqrt{N}}\norm{\cdot}\Big)\right) \diff \varepsilon 
        = 
        \frac{CJ}{\sqrt{N}}.
    \end{align}
    By \Cref{thm:Dudley} we have for some absolute constant $c_1>0$
    \begin{align*}
        \Pb\left(\sup_{\theta,\theta'\in\Theta}\abs{f(\theta) - f(\theta')} \geq c_1\Big(\frac{CJ}{\sqrt{N}} + \tau\Big)\right) 
        &\leq 
        2\exp\Big(-\frac{\tau}{D'}\Big) = 2\exp\Big(-\frac{\tau\sqrt{N}}{DC}\Big) 
        \qquad 
        \forall \tau\in\R.
    \end{align*}
    Substituting $t = c_1\Big(\frac{CJ}{\sqrt{N} } + \tau\Big)$ finishes the proof.
\end{proof}

\begin{lemma}\label{lem:aux}
    Let $\{F_\theta : \theta \in \Theta\}$ be a stochastic process and fix some $\theta_0 \in \Theta$. Then
    \begin{align*}
        \Pb\left(\sup_{\theta\in\Theta}\abs{F_\theta} \geq 2t\right) 
        &\leq 
        \Pb\left(\sup_{\theta, \theta'\in\Theta} \abs{F_\theta - F_{\theta'}} \geq t\right) 
        + \Pb\left(\abs{F_{\theta_0}} \geq t\right).
    \end{align*}
\end{lemma}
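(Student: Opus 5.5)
The plan is a triangle inequality followed by a union bound. Fix $\theta_0\in\Theta$ and let $\theta\in\Theta$ be arbitrary. Then
\begin{align*}
    \abs{F_\theta} \leq \abs{F_\theta - F_{\theta_0}} + \abs{F_{\theta_0}} \leq \sup_{\theta,\theta'\in\Theta}\abs{F_\theta - F_{\theta'}} + \abs{F_{\theta_0}}.
\end{align*}
The right-hand side does not depend on $\theta$, so taking the supremum over $\theta$ on the left gives, pointwise on the underlying probability space,
\begin{align*}
    \sup_{\theta\in\Theta}\abs{F_\theta} \leq \sup_{\theta,\theta'\in\Theta}\abs{F_\theta - F_{\theta'}} + \abs{F_{\theta_0}}.
\end{align*}

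Next, I would compare events. Suppose $\sup_\theta \abs{F_\theta} \geq 2t$. Then the sum of the two nonnegative quantities on the right is at least $2t$, so at least one of them is at least $t$. This gives the inclusion
\begin{align*}
    \Big\{\sup_{\theta}\abs{F_\theta}\geq 2t\Big\} \subseteq \Big\{\sup_{\theta,\theta'}\abs{F_\theta-F_{\theta'}}\geq t\Big\} \cup \Big\{\abs{F_{\theta_0}}\geq t\Big\}.
\end{align*}
Monotonicity and subadditivity of $\Pb$ then yield the claim.

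There is no real obstacle here. The one point to check is measurability of the suprema, which I would assume implicitly, as the statement does by writing these probabilities. Alternatively, the bound can be read with outer probability, for which monotonicity and subadditivity still hold, so the argument goes through unchanged.
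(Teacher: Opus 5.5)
Your proposal is correct and follows the same approach as the paper, whose proof is simply ``apply triangle inequality and union bound'': you bound $\sup_\theta\abs{F_\theta}$ via $\theta_0$, observe that a sum of two quantities reaching $2t$ forces one of them to reach $t$, and apply subadditivity of $\Pb$. Your remark on measurability (or outer probability) is a reasonable addition that the paper leaves implicit.
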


\begin{proof}
    Apply triangle inequality and union bound.
\end{proof}

\begin{lemma}[Hoeffding, Theorem 2.2.6 in \cite{vershynin2018high}]\label{lem:Hoeffding}
    Let $F_1, ..., F_N$ be i.i.d., assume that $\abs{F_k}\leq C$, then for any $t\geq 0$, we have
    \begin{align*}
        \Pb\left(\abs{\frac{1}{N}\sum_{k=1}^N F_k - \E(F_1)} \geq t \right) \leq 2\exp\left(-\frac{Nt^2}{2C^2}\right).
    \end{align*}
\end{lemma}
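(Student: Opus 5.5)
The statement immediately above is Hoeffding's inequality, quoted from the literature, so nothing specific to this paper is needed. A short proof goes through the Chernoff bound and Hoeffding's lemma.

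\textbf{Step 1: centre the variables.} Set $G_k := F_k - \E(F_1)$. These are i.i.d., have mean zero, and take values in an interval $[a,b]$ with $b-a \leq 2C$, since $F_k \in [-C,C]$.

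\textbf{Step 2: Hoeffding's lemma.} For a mean-zero variable with values in $[a,b]$,
\begin{align*}
\E e^{\lambda G_k} \leq \exp\bigl(\lambda^2 (b-a)^2/8\bigr) \leq \exp\bigl(\lambda^2 C^2/2\bigr) \qquad \text{for all } \lambda\in\R.
\end{align*}
This is the only nontrivial ingredient. I would prove it in three moves.
\begin{itemize}
\item Use convexity of $x \mapsto e^{\lambda x}$ to bound $e^{\lambda G}$ by the chord through the endpoints $a$ and $b$.
\item Take expectations. Since $\E G = 0$, this gives $\E e^{\lambda G} \le e^{\phi(u)}$ with $u = \lambda(b-a)$ and $\phi(u) = -\gamma u + \log(1-\gamma+\gamma e^{u})$, where $\gamma = -a/(b-a)$.
\item Check that $\phi(0)=\phi'(0)=0$ and $\phi''\leq 1/4$. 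Taylor's theorem then gives $\phi(u)\le u^2/8$.
\end{itemize}

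\textbf{Step 3: Chernoff bound.} By independence, Markov's inequality and Step 2, for every $\lambda > 0$,
\begin{align*}
\Pb\Bigl(\tfrac{1}{N}\textstyle\sum_k G_k \ge t\Bigr) \le e^{-\lambda N t}\,\bigl(\E e^{\lambda G_1}\bigr)^N \le \exp\bigl(-\lambda N t + N\lambda^2 C^2/2\bigr).
\end{align*}
Choose $\lambda = t/C^2$ to get the bound $\exp(-Nt^2/(2C^2))$.

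\textbf{Step 4: lower tail.} Apply the same argument to $-G_k$. A union bound over the two tails supplies the factor $2$.

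The only real obstacle is verifying $\phi''\le 1/4$ in Step 2. Here $\phi''(u)=q(1-q)$, where $q = \gamma e^{u}/(1-\gamma+\gamma e^{u})\in[0,1]$, and $q(1-q)\le 1/4$ always holds.
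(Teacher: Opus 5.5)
Your proof is correct. The paper gives no argument of its own: it cites Vershynin's Theorem 2.2.6, the one-sided bound $\exp\bigl(-2s^2/\sum_k (M_k-m_k)^2\bigr)$ for independent variables in $[m_k,M_k]$, which yields the stated two-sided form after taking $M_k-m_k=2C$, $s=Nt$ and a union bound. Your Chernoff-plus-Hoeffding's-lemma argument is the standard proof of that cited result.

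The one step you leave implicit is that $\E G=0$ forces $a\le 0\le b$, hence $\gamma\in[0,1]$. This is what makes $\log(1-\gamma+\gamma e^{u})$ well defined and guarantees $q\in[0,1]$.
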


\section{Non-asymptotic convergence results}\label{sec:proof}
\subsection[Concentration of f\_M\textasciicircum{}N(theta)]{Concentration of $\pl(\theta)$}\label{sec:concentration}
\noindent 
From now on we assume $M\geq 2$. In this section we will abbreviate the random variables $p^\theta(X_i^k,Y_j^k)$ as $p_{ij}^k(\theta)$. Similarly, for points $(x_i)_i \subset \mathbb{X}$, $(y_i)_i \subset \mathbb{Y}$, we abbreviate $p^*(x_i,y_i)$ as $p^*_{ii}$ and $p^\theta(x_i,y_j)$ as $p_{ij}(\theta)$. We also define $q_{ij}^k(\theta) := p_{ij}^k(\theta) - 1$ and $q_{ij}(\theta) = p_{ij}(\theta) - 1$. 
Note that $p^{\ast}_{ii}$ and $p_{ij}$ integrate to $1$ on both marginals, i.e.~we have
\begin{align*}
    \int_{\mathbb{X}} p^{\ast}_{ii} \diff \mu(x_i)
    = 
    \int_{\mathbb{X}} p^{\ast}_{ii} \diff \nu(y_i)
    =
    1
\end{align*}
and a corresponding statement for $p_{ij}(\theta)$. Similarly $q_{ij}(\theta)$ integrates to $0$.
We write $\diff \pi_{ii}$ for integrating against $\diff \pi(x_i,y_i)$ and $\diff \pi^{\otimes M}$ for the joint integration against all factors $\Pi_{i=1}^M \diff \pi_{ii}$. We use similar notation for integrals against the product measure $\mu \otimes \nu$.

We now prove a statement about the asymptotic form of our expected pseudo-likelihood, as $M \to \infty$.
\begin{theorem*}[Recollection of \protect{\Cref{thm:mean_pseudo}}]
    $\epl(\theta) = \frac{1}{2}\Big(\norm{p^\theta- p^*}^2_{L^2(\mu\otimes\nu)} - \norm{p^*}^2_{L^2(\mu\otimes\nu)} + 1\Big) + O(M^{-1})$.
\end{theorem*}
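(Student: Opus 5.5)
By linearity of expectation and since batches are i.i.d., the batch index can be dropped and
\begin{align*}
    \epl(\theta) = -\sum_{i,j=1}^M \E\log\Big(1+\tfrac{1}{M}q_{ij}(\theta)\Big).
\end{align*}
The $M^2$ pairs split into two groups. There are $M$ diagonal pairs $i=j$, where $(X_i,Y_i)\sim\pi = p^*\cdot(\mu\otimes\nu)$. There are $M(M-1)$ off-diagonal pairs $i\neq j$, where $X_i$ and $Y_j$ are independent with law $\mu\otimes\nu$. This gives the exact identity
\begin{align}\label{eq:Eql_integral}
    \epl(\theta) = -M\int \log\Big(1+\tfrac{q^\theta}{M}\Big) p^* \diff(\mu\otimes\nu) - M(M-1)\int \log\Big(1+\tfrac{q^\theta}{M}\Big)\diff(\mu\otimes\nu),
\end{align}
where $q^\theta := p^\theta-1$. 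Rewriting both terms as $M^2$ times an integral against $\frac{1}{M}p^*+\frac{M-1}{M}$ also yields the KL representation mentioned in \Cref{rem:KL}.

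Next I Taylor expand the logarithm. Since $0\le p^\theta\le U$, we have $q^\theta\in[-1,U-1]$, so for $M\ge2$ the argument satisfies $1+q^\theta/M\ge 1/2$. Hence the expansion
\begin{align*}
    \log(1+s)=s-\tfrac{s^2}{2}+R(s), \qquad \abs{R(s)}\le C\abs{s}^3,
\end{align*}
holds uniformly for $s=q^\theta/M$, with $C$ depending only on $U$. This bounded range is the only delicate point, and it is handled by the bounds of \Cref{def:candidate}.

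Then I substitute the expansion and collect powers of $M$.
\begin{itemize}
    \item \emph{Off-diagonal term, first order:} $\int q^\theta\diff(\mu\otimes\nu)=0$ by the marginal constraint, so this contribution vanishes identically.
    \item \emph{Off-diagonal term, second order:} it contributes $M(M-1)\frac{1}{2M^2}\norm{q^\theta}^2 = \frac12\norm{p^\theta-1}^2+O(M^{-1})$.
    \item \emph{Off-diagonal remainder:} it is $O(M^2\cdot M^{-3})=O(M^{-1})$.
    \item \emph{Diagonal term, first order:} it contributes $-\int q^\theta p^*\diff(\mu\otimes\nu) = -\langle p^\theta,p^*\rangle + 1$, using $\int p^*\diff(\mu\otimes\nu)=1$.
    \item \emph{Diagonal term, higher orders:} these are $O(M\cdot M^{-2})=O(M^{-1})$, since $\int p^*\diff(\mu\otimes\nu)=1$ and $q^\theta$ is bounded.
\end{itemize}

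Finally I collect the terms. Using $\int p^\theta\diff(\mu\otimes\nu)=1$,
\begin{align*}
    \tfrac12\norm{p^\theta-1}^2-\langle p^\theta,p^*\rangle+1
    =\tfrac12\norm{p^\theta}^2-\tfrac12-\langle p^\theta,p^*\rangle+1 .
\end{align*}
Completing the square gives
\begin{align*}
    \tfrac12\Big(\norm{p^\theta-p^*}^2-\norm{p^*}^2+1\Big),
\end{align*}
which is the claimed limit.

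The remaining point to check is $\norm{p^*}_{L^2}<\infty$. The diagonal terms only integrate against $p^*$ and never need $(p^*)^2$; the term $\norm{p^*}^2$ appears solely through the completion of the square. If $p^*$ is not in $L^2$, the identity should be read with $-\langle p^\theta,p^*\rangle$ in place of the completed-square form. The $O(M^{-1})$ constant is uniform in $\theta$, with $C$ depending only on $U$.
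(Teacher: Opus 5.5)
Your proof is correct and follows essentially the same route as the paper. Both split the expectation into the $M$ diagonal and $M(M-1)$ off-diagonal pairs to obtain the exact integral identity, Taylor expand the logarithm with a remainder made uniform by $q^\theta\in[-1,U-1]$ and $M\ge 2$, cancel the first-order off-diagonal term via $\int q^\theta \diff(\mu\otimes\nu)=0$, and complete the square. The differences are cosmetic: you expand the diagonal and off-diagonal pieces separately rather than after combining them into the weight $\frac{1}{M}p^*+\frac{M-1}{M}$, and your remark that the completed-square form tacitly needs $p^*\in L^2(\mu\otimes\nu)$ is a fair point the paper leaves implicit.
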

\begin{proof}
    Recall $q_{ij}(\theta) := p_{ij}(\theta) - 1$. We get
    \begin{align*}
        -\epl (\theta)
        &=
        \int \sum_{i,j=1}^M\log\Big(\frac{1}{M}p_{ij}(\theta) + \frac{M-1}{M}\Big) \diff \pi^{\otimes M}
        \\&=
        \int \sum_{i,j=1}^M \log\Big(1 + \frac{1}{M}q_{ij}(\theta)\Big)\ \prod_{l=1}^M p^*_{ll} \ \diff(\mu\otimes\nu)^{\otimes M}
        \\&=
        \sum_{i=1}^M \int \log\Big(1 + \frac{1}{M}q_{ii}(\theta)\Big)\ \prod_{l=1}^M p^*_{ll} \ \diff(\mu\otimes\nu)^{\otimes M} \\
        &\qquad + 
        \sum_{i\neq j} \int \log\Big(1 + \frac{1}{M}q_{ij}(\theta)\Big)\ \prod_{l=1}^M p^*_{ll} \ \diff(\mu\otimes\nu)^{\otimes M}.
    \end{align*}
        For the first part, consider the summand $i=1$. It equals $\int \log (1+q_{11}/M) p^*_{11}\prod_{l\neq 1} p_{ll}^* \diff(\mu\otimes\nu)^{\otimes M},$ and one can integrate over $(x_l,y_l)$ for all $l\neq 1$ by using $\int p^*_{ll} d\mu\otimes\nu = 1$. One can argue likewise for all other values of $i$, yielding $M$ times the same value.
        For the second part, similarly consider the summand $i=1$ and $j=2$. It equals $\int \log(1+q_{12}/M)p^*_{11}p^*_{22}\prod_{l=3}^M p^*_{ll} \diff(\mu\otimes\nu)^{\otimes M}$, we can again integrate over $(x_l,y_l)$ for $l\geq 3$, and then integrate over $y_1$ and $x_2$ by $\int_\mathbb{X} p^*_{22} \diff\mu(x_2) = \int_\mathbb{Y} p^*_{11} \diff\nu(y_1) = 1$. Arguing likewise for all other values of $i,j$, one obtains $M(M-1)$ times this contribution. Therefore,
    \begin{align}
        -\epl (\theta) &=
        M\int \log\Big(1+\frac{1}{M}q_{11}(\theta)\Big) p^*_{11} \diff\mu\otimes\nu + M(M-1)\int \log\Big(1+\frac{1}{M}q_{11}(\theta)\Big) \diff\mu\otimes\nu
        \nonumber \\&=
        M^2 \int \log\Big(1+\frac{1}{M}q_{11}(\theta)\Big) \ \Big(\frac{1}{M}p^*_{11} + \frac{M-1}{M}\Big) \diff\mu\otimes\nu.
        \label{eq:Eql_integral}
    \end{align}
    Next, we derive the asymptotic behaviour as $M \to \infty$.
    For this we use the Taylor expansion $\log(1+x) = x - x^2/2 + O(x^3)$ (with the fact that $q_{11}(\theta) \in [-1,U-1]$, making the approximation error uniformly bounded) and $M \geq 2$ to obtain
    \begin{align*}
        -\epl (\theta)
        &=
        M^2\int \Big(\frac{q_{11}(\theta)}{M} - \frac{(q_{11}(\theta))^2}{2M^2} + O(M^{-3})\Big) \Big(\frac{1}{M}p^*_{11} + \frac{M-1}{M}\Big) \diff\mu\otimes\nu
        \\&=
        M^2\int 
        \frac{q_{11}(\theta)}{M} \frac{1}{M}p^*_{11} 
        + \frac{q_{11}(\theta)}{M} \frac{(M-1)}{M}
        - \frac{(q_{11}(\theta))^2}{2M^2} \Big(\frac{1}{M}p^*_{11} 
        + \frac{(M-1)}{M}\Big)
        \diff\mu\otimes\nu 
        + O(M^{-1}).
        \intertext{
        We continue by cancelling the second term via $\int q_{11} \diff\mu\otimes\nu = 0$, absorbing more terms into the $O(M^{-1})$ and rearranging the rest
        }
        -\epl (\theta)
        &=
        \frac{1}{2}\int 
        2 q_{11}(\theta) p^*_{11} 
        - (q_{11}(\theta))^2 \left(\frac{1}{M}p^*_{11}  + \frac{(M-1)}{M}\right)
        \diff\mu\otimes\nu 
        + O(M^{-1})
        \\&=
        \frac{1}{2}\int 2q_{11}(\theta) p^*_{11} - (q_{11}(\theta))^2 \diff\mu\otimes\nu + O(M^{-1})
        \\&=
        -\frac{1}{2}\int\Big(q_{11}(\theta) - p^*_{11}\Big)^2 \diff\mu\otimes\nu + \frac{1}{2}\int (p^*_{11})^2 \diff\mu\otimes\nu + O(M^{-1}).
        \intertext{
        Now we re-substitute $q_{11} = p_{11}+1$ to finish the proof
        }
        -\epl (\theta)
        &= 
        -\frac{1}{2}\int\Big(p_{11}(\theta) - p^*_{11}\Big)^2 - 2\Big(p_{11}(\theta) - p^*_{11}\Big) + 1\  \diff \mu\otimes\nu + \frac{1}{2}\int (p^*_{11})^2 \diff\mu\otimes\nu + O(M^{-1})
        \\&=
        -\frac{1}{2}\int\Big(p_{11}(\theta) - p^*_{11}\Big)^2 \diff\mu\otimes\nu + \frac{1}{2}\int (p^*_{11})^2 \diff\mu\otimes\nu - \frac{1}{2} + O(M^{-1}).
        \qedhere
    \end{align*}
\end{proof}

\begin{theorem*}[Recollection of \protect{\Cref{thm:main_detailed}}]
    Let $J,D$ be as in \Cref{lem:Dudley_iid}. There exists an absolute constant $c>0$ such that
    \begin{align*}
        \Pb\left(\sup_{\theta\in\Theta}\abs{\pl(\theta) - \epl(\theta)} \geq 6t\right) \leq 6\exp\left(-c\min\Big\{ \frac{t^2 N}{U^4}, \frac{tN}{U}\Big\}\right) + 6\exp\left(\frac{J}{D}\right)\exp\left(-\frac{ct\sqrt{N}}{D\mathcal{L}U}\right).
    \end{align*}
\end{theorem*}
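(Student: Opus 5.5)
The plan is to write $\pl(\theta) = \frac1N\sum_{k=1}^N G_k(\theta)$ with i.i.d.\ batch terms
\[
G_k(\theta) := -\sum_{i,j=1}^M \log\Big(1+\tfrac1M q^k_{ij}(\theta)\Big),
\]
and to split each $G_k$ into three pieces whose fluctuations are controlled uniformly in $M$.

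\textbf{Step 1: the decomposition.} Use $\log(1+x) = x + r(x)$. Since $M\ge 2$ and $q_{ij}\in[-1,U-1]$, the argument $x=q_{ij}/M$ lies in $[-\tfrac12,\tfrac{U-1}{2}]$, where $|r(x)|\le C_U x^2$. This gives $G_k = -A_k - B_k - R_k$ with
\[
A_k(\theta) := \tfrac1M\sum_{i} q^k_{ii}(\theta), \qquad B_k(\theta) := \tfrac1M\sum_{i\neq j} q^k_{ij}(\theta), \qquad R_k(\theta) := \sum_{i,j} r\big(q^k_{ij}(\theta)/M\big).
\]
The diagonal term satisfies $|A_k|\le U$. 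The remainder satisfies $|R_k|\le C_U M^{-2}\sum_{i,j}(q^k_{ij})^2 \lesssim U^2$ deterministically, uniformly in $M$. For Lipschitz increments in $\theta$, $|q^\theta-q^{\theta'}|\le\lip\norm{\theta-\theta'}$ gives $|A_k(\theta)-A_k(\theta')|\le \lip\norm{\theta-\theta'}$. Similarly, $|R_k(\theta)-R_k(\theta')|\lesssim U\lip\norm{\theta-\theta'}$, using $|r'(x)|\lesssim |x|$ on the relevant range.

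\textbf{Step 2: the off-diagonal term $B_k$.} Centre each of the three pieces at its mean. $B_k$ is the delicate piece: it has $M(M-1)$ summands but only a $1/M$ prefactor. The key observation is that it is a \emph{degenerate} U-statistic in the i.i.d.\ pairs $Z_i=(X_i,Y_i)$. Its kernel $h(Z_i,Z_j)=q(X_i,Y_j)$ satisfies $\E[h(z,Z_j)]=\int q(x,y)\,\diff\nu(y)=0$ and symmetrically in the other argument, because $q$ integrates to zero on each marginal. Consequently $\E B_k=0$, and in $\E B_k^2$ only index pairs with $\{k,l\}=\{i,j\}$ survive, so $\E B_k^2\lesssim U^2$. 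For higher moments I would invoke moment inequalities for canonical U-statistics of order two (de la Pe\~na--Gin\'e decoupling plus hypercontractivity/Rademacher chaos bounds), or carry out a direct moment computation that counts index configurations in which every index appears at least twice. Either route should give $\E|B_k|^p\le (CUp)^p$ uniformly in $M$, hence $\norm{B_k}_{\psi_1}\lesssim U$ by \Cref{lem:psi_norm}. The same argument applied to the kernel $q^\theta-q^{\theta'}$, which is bounded by $\lip\norm{\theta-\theta'}$ and still degenerate, gives $\norm{B_k(\theta)-B_k(\theta')}_{\psi_1}\lesssim \lip\norm{\theta-\theta'}$. This uniform-in-$M$ sub-exponential bound for the chaos $B_k$ is where I expect the main obstacle to lie.

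\textbf{Step 3: assembly.} Apply \Cref{lem:aux} to each of the three centred averages with threshold $2t$ and a fixed $\theta_0$, so that the three contributions sum to $6t$.
\begin{itemize}
\item The single-point terms are handled by \Cref{thm:Bernstein}, using $\psi_1$ norms $\lesssim U$ for $A$ and $B$ and $\lesssim U^2$ for $R$; Hoeffding (\Cref{lem:Hoeffding}) also suffices for the bounded pieces. This yields three terms of the form $2\exp(-c\min\{t^2N/U^4,\,tN/U\})$, after enlarging the $\psi_1$ norms to the worst case.
\item The supremum-of-increments terms are handled as follows. By the increment bounds above together with \Cref{lem:psi_norm_iid}, or \Cref{lem:trivial} for $A$ and $R$, each centred average is a $\psi_1$-process with respect to $\frac{cU\lip}{\sqrt N}\norm{\cdot}$, taking $U\ge1$ without loss of generality. \Cref{lem:Dudley_iid} then gives $2\exp(J/D)\exp(-ct\sqrt N/(D\lip U))$ for each.
\end{itemize}
Summing the six bounds gives the claimed estimate.
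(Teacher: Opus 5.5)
Your proposal is correct and is essentially the paper's proof. You use the same split of $\pl$ into the off-diagonal part ($f_a$), the diagonal part ($f_b$) and the Taylor remainder ($f_c$), combined with \Cref{lem:aux}, \Cref{lem:Dudley_iid} and Bernstein/Hoeffding at a fixed $\theta_0$ via a union bound. The step you flag as the main obstacle is handled in the paper by exactly your second route: only index strings $(i_1,j_1,\dots,i_l,j_l)$ in which no index occurs exactly once contribute, and \Cref{lem:count} bounds their number by $(2Ml)^l$, which yields $\E|F_1|^l\le(2lU)^l$ uniformly in $M$ (and the analogous bound with $\lip\norm{\theta-\theta'}$ for increments), with Jensen handling odd $l$.
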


\noindent 
The full proof will be provided at the end of this section.
To control $\sup_{\theta\in\Theta}\abs{\pl(\theta) - \epl(\theta)}$ we separate $\pl(\theta)$ into three terms, namely
\begin{align}
    \pl(\theta) 
    &= 
    \frac{1}{NM}\sum_{k=1}^N\sum_{i,j=1}^M q_{ij}^k(\theta)  +\pl(\theta) -  \frac{1}{NM}\sum_{k=1}^N\sum_{i,j=1}^M q_{ij}^k(\theta)
    \nonumber
    \\&=
    -\underbrace{\frac{1}{NM}\sum_{k=1}^N\sum_{i,j=1, i\neq j}^M q_{ij}^k(\theta)}_{f_a(\theta)}
    -
    \underbrace{\frac{1}{NM}\sum_{k=1}^N\sum_{i=1}^M q_{ii}^k(\theta)}_{f_b(\theta)}
    +
    \underbrace{\pl(\theta) +  \frac{1}{NM}\sum_{k=1}^N\sum_{i,j=1}^M q_{ij}^k(\theta)}_{f_c(\theta)}
\end{align}
and apply the triangle inequality to get
\begin{align}\label{eq:ql_decompose}
    \sup_{\theta\in\Theta}\abs{\pl(\theta) - \epl(\theta)} \leq \sup_{\theta\in\Theta}\abs{f_a(\theta) - \E f_a(\theta)} + 
    \sup_{\theta\in\Theta}\abs{f_b(\theta) - \E f_b(\theta)} +
    \sup_{\theta\in\Theta}\abs{f_c(\theta) - \E f_c(\theta)}.
\end{align}
Our goal is to obtain an $M$-independent concentration rate for $\pl$, and we separate $\pl$ into three parts for a cleaner presentation. Note that the concentration of $f_a$ requires more effort than the others, since it is the component that naively appears to scale at the rate $O(M)$.
The following lemmata work towards bounding each of the summands above.

\begin{lemma}[Combinatorial interlude]\label{lem:count}
    Let $l \in \N$, and we write $\llbracket 1, l \rrbracket := \{1,\ldots, l\}$ for the first $l$ natural numbers. Define
    \begin{align*}
        S 
        := 
        \{ (z_i)_{i=1}^{2l} \in \llbracket 1,M \rrbracket^{2l} \mid \forall i \, \exists j \neq i : z_j = z_i \}.
    \end{align*}
    This can be interpreted as the set of strings over the alphabet $\llbracket 1, M \rrbracket$ of length $2l$ where no character appears exactly once.
    We have
    \begin{align*}
        \abs{S}
        \leq
        (2 M l)^l.
    \end{align*}
\end{lemma}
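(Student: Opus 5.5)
The plan is to count strings in $S$ by how their positions group into equal characters, and to bound the count by a recursion on the length. Write $n$ for a general length and $s(n)$ for the number of strings in $\llbracket 1,M\rrbracket^n$ in which no character appears exactly once, so that $\abs{S}=s(2l)$. The target is $s(n)\le (nM)^{n/2}$, which gives $\abs{S}\le (2lM)^l$ at $n=2l$. I allow odd $n$ in the recursion because deleting single positions will produce odd lengths.

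First I dispose of the regime $M\le n$. Here the trivial bound $s(n)\le M^n=M^{n/2}M^{n/2}\le (nM)^{n/2}$ already suffices. So assume $M>n$.

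The recursion comes from looking at the character $c$ in the last position $n$. Either $c$ occurs exactly twice, or at least three times.

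In the first case, choose the partner position ($n-1$ choices) and the value of $c$ (at most $M$ choices). Deleting both positions leaves a string of length $n-2$ that is still in the corresponding set, since any character appearing there appeared at least twice originally and was not $c$. This contributes at most $(n-1)M\,s(n-2)$.

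In the second case, deleting position $n$ leaves a valid string of length $n-1$. Position $n$ copies one of its at most $(n-1)/2$ distinct characters. This contributes at most $\tfrac{n-1}{2}\,s(n-1)$.

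Together,
\begin{align*}
s(n)\le (n-1)M\,s(n-2)+\tfrac{n-1}{2}\,s(n-1),\qquad s(0)=1,\ s(1)=0.
\end{align*}
The induction step then consists of inserting the hypothesis and comparing with $(nM)^{n/2}$. The first term gives at most $\tfrac{n-1}{n}(nM)^{n/2}$. The second term is of relative size about $\tfrac{n-1}{2}\big(\tfrac{n-1}{nM}\big)^{1/2}\cdot\tfrac{1}{\sqrt{n}}$, which is small when $M>n$.

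I expect the constant bookkeeping in this induction to be the main obstacle. The slack from the first term is only $(nM)^{n/2}/n$, while the second term with the crude hypothesis is of order $(nM)^{n/2}\sqrt{n/M}$. That is not obviously below $1/n$ unless $M\gtrsim n^3$.

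If it does not close, my first remedy is to strengthen the hypothesis to something like $s(n)\le \big(\tfrac{nM}{e}\big)^{n/2}e^{\,n^2/(2M)}$, or to an exact sum over set partitions. Using the partition form,
\begin{align*}
s(n)=\sum_{P} M^{\abs{P}},
\end{align*}
where $P$ ranges over partitions of $\llbracket 1,n\rrbracket$ into blocks of size at least $2$. I would then bound the number of such partitions with $k$ blocks by $\binom{n}{2k}(2k-1)!!\,k^{\,n-2k}$. This counts choosing a perfect matching of $2k$ "core" positions and attaching each of the remaining $n-2k$ positions to a block.

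Finally I would sum over $k\le n/2$, using $M^k\le M^{n/2}(n/M)^{n/2-k}$ in the regime $M>n$. The $k=n/2$ term is $(n-1)!!\,M^{n/2}\le (nM)^{n/2}/\sqrt{2}$ roughly. The remaining terms have to be shown to form a convergent tail absorbed in the leftover factor. Controlling that tail uniformly in $l$ is the step I would expect to need the most care.
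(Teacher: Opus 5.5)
\textbf{Verdict: genuine gap, but one you can fill inside your own framework.}

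Your recursion $s(n)\le (n-1)M\,s(n-2)+\tfrac{n-1}{2}\,s(n-1)$ is correct, and so is the trivial bound for $M\le n$. As written, however, the proposal does not prove the lemma, because you leave the induction step open. Your reason for doubting it comes from a lossy estimate, not from a real obstruction. Inserting the hypothesis into the first term gives $\frac{n-1}{n}\bigl(1-\frac2n\bigr)^{(n-2)/2}(nM)^{n/2}$, not just $\frac{n-1}{n}(nM)^{n/2}$. The factor you dropped is about $e^{-1}$, so the slack is a constant fraction of $(nM)^{n/2}$, not $(nM)^{n/2}/n$.

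Concretely, set $u_n:=(1-\frac1n)^{(n-1)/2}$. Dividing the recursion by $(nM)^{n/2}$ gives
\[
\frac{s(n)}{(nM)^{n/2}}\le \frac{n-1}{n}\Bigl(1-\frac2n\Bigr)^{\frac{n-2}{2}}+\frac{n-1}{2\sqrt{nM}}\,u_n\le u_n^2+\frac{u_n}{2}.
\]
Here the first term is bounded using $1-\frac2n\le(1-\frac1n)^2$, and the second using $M\ge n$. Since $x\mapsto x\log(1+1/x)$ is increasing, $u_n\le u_2=2^{-1/2}$ for all $n\ge 2$. So the right-hand side is at most $\frac12+\frac{1}{2\sqrt2}<1$. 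Take $s(0)=1$ and $s(1)=0$ (with $0^0=1$) as base cases. For each $n\ge2$ you use either the trivial bound or this step, and the induction on $n$ for fixed $M$ closes. It gives $\abs{S}=s(2l)\le(2lM)^l$ with nothing beyond what you already set up.

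\textbf{The fallback routes would not have rescued the argument as stated.}
\begin{itemize}
\item The strengthened hypothesis $s(n)\le(nM/e)^{n/2}e^{n^2/(2M)}$ is false. Already at $n=2$, $s(2)=M>\frac{2M}{e}e^{2/M}$ once $M\ge 7$.
\item The partition identity should read $s(n)=\sum_P M(M-1)\cdots(M-\abs{P}+1)$. With $M^{\abs{P}}$ it is only an upper bound.
\item The tail estimate in that route is exactly the part you leave undone.
\end{itemize}

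\textbf{Comparison with the paper.} The paper avoids induction altogether. It maps $B\times\llbracket 1,M\rrbracket^l\times\llbracket 1,l\rrbracket^l$ onto $S$: choose $l$ ``leader'' positions, assign them letters, and let every other position copy some leader. It then shows each element of $S$ has at least $2^l$ preimages. To see this, split the $2l$ positions into $l$ pairs, starting with one equal-letter pair for each occurring letter, and choose the leader in each pair independently. This yields $\abs{S}\le\binom{2l}{l}M^l l^l 2^{-l}\le(2Ml)^l$ in one line, with no case distinction in $M$. Your repaired recursion is a legitimate alternative. It proves the slightly more general bound $s(n)\le(nM)^{n/2}$ for all lengths, including odd ones, at the cost of the split $M\le n$ versus $M>n$ and the numerical check above.
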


\begin{proof}
    Let $B := \{b := (b_i)_{i=1}^{2l} \in \{0,1\}^{2l} \mid \norm{b}_1 = l\}$ be the set of binary strings of length $2l$ with exactly $l$ entries equal to $1$.
    We define a map
    \begin{align*}
        \sigma : 
        B \times \llbracket 1, M \rrbracket^l \times \llbracket 1,l \rrbracket^l \rightarrow S, 
        \quad 
        (b, c, k) \mapsto s
    \end{align*}
    by the following procedure:
    \begin{itemize}
        \item For $i \in \llbracket 1, l \rrbracket$, let $j$ be the $i$-th entry in $b$ equal to $1$. Set $s_j := c_i$.
        \item For $i \in \llbracket 1, l \rrbracket$, let $j$ be the $i$-th entry in $b$ equal to $0$. Set $s_j := c_{k_i}$.
    \end{itemize}
    Next we show that $\sigma$ is surjective and the preimage of any $s \in S$ has at least $2^l$ elements.
    Given an arbitrary $s \in S$, let $\tau$ be a permutation such that $(s_{\tau(i)})_{i=1}^{2l}$ starts with a pair of each letter occurring in $s$ followed by the rest in arbitrary order. This exists since no letter appears exactly once in $s$. In the following, we assume w.l.o.g.~$\tau = \id$.
    We construct preimages of $s \in S$ under $\sigma$ by the following procedure:
    \begin{itemize}
        \item For $i \in \llbracket 1, l \rrbracket$ set 
        $(b_{2i-1}, b_{2i}) \in \{(1,0), (0,1)\}$ 
        arbitrarily.
        
        \item For $i \in \llbracket 1, l \rrbracket$ set 
        $c_i := 
        \begin{cases} 
            s_{2i-1} & \text{if } b_{2i-1} = 1, \\ 
            s_{2i} & \text{if } b_{2i} = 1.
        \end{cases}$
        
        \item For $i \in \llbracket 1, l \rrbracket$ set
        $k_i := 
        \begin{cases}
            c^{-1}(s_{2i-1}) & \text{if } b_{2i-1} = 0, \\
            c^{-1}(s_{2i}) & \text{if } b_{2i} = 0
        \end{cases}$
        
        where $c^{-1}(a)$ denotes an arbitrary index $j \in \llbracket 1, l \rrbracket$ such that $c_j = a$ (which exists by assumption and construction).
    \end{itemize}
    Applying the procedure defining $\sigma$ to this, it is easy to see that the constructed values are indeed preimages and since $(b_{2i-1}, b_i)$ are chosen independently and arbitrarily from two possibilities each, we have $2^l$ different preimages.
    Therefore
    \begin{align*}
        \abs{S} \leq \frac{\abs{B} M^l l^l}{2^l} \leq \frac{2^{2l} M^l l^l}{2^l} &= (2 M l)^l.
        \qedhere
    \end{align*}
\end{proof}

\begin{lemma}\label{lem:fa}
$f_a(\theta)$ is a $\psi_1$-process with respect to $\frac{c\lip  }{\sqrt{N}}\norm{\cdot}$, in which $c>0$ is some absolute constant and $\norm{\cdot}$ is the original norm on $\Theta$. 
\end{lemma}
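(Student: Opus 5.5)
The plan is to reduce the claim to a single-batch moment estimate that does not depend on $M$, and then average over the $N$ i.i.d.\ batches with \Cref{lem:psi_norm_iid}. Write $f_a(\theta)=\frac1N\sum_{k=1}^N G_k(\theta)$ with $G_k(\theta):=\frac1M\sum_{i\neq j}q^k_{ij}(\theta)$. The $G_k$ are i.i.d.\ over $k$.

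\textbf{Step 1: $f_a$ has mean zero.} For $i\neq j$ the variables $X_i^k\sim\mu$ and $Y_j^k\sim\nu$ are independent. Since $\int_{\mathbb{X}} q(x,y)\diff\mu(x)=0$ for every $y$, we get $\E q^k_{ij}(\theta)=0$. Hence $\E f_a(\theta)=0$, and the process to control is $f_a$ itself. Fix $\theta,\theta'$, set $h_{ij}:=q_{ij}(\theta)-q_{ij}(\theta')$ and $d:=\norm{\theta-\theta'}$. Then $\abs{h_{ij}}\le \lip d$ by the equi-Lipschitz property in \Cref{def:candidate}. It suffices to show $\norm{G_1(\theta)-G_1(\theta')}_{\psi_1}\le c\lip d$. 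The claim then follows from \Cref{lem:psi_norm_iid}, since $G_k(\theta)-G_k(\theta')$ are i.i.d.\ and mean zero.

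\textbf{Step 2: even moments (the main obstacle).} Expand
\begin{align*}
\E\Big(\sum_{i\neq j}h_{ij}\Big)^{2l}=\sum_{(i_1,j_1),\dots,(i_{2l},j_{2l})}\E\prod_{r=1}^{2l}h_{i_rj_r}.
\end{align*}
Regard each index tuple as a string $(i_1,j_1,\dots,i_{2l},j_{2l})\in\llbracket1,M\rrbracket^{4l}$.

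The key claim is that a term vanishes whenever some letter $a$ occurs exactly once in this string. Say $a$ occurs only as $i_r$, so $x_a$ enters only through $h_{aj_r}$ with $j_r\neq a$, and $y_a$ does not occur at all. Integrating out $y_a$ first leaves $x_a\sim\mu$, independent of all other variables, because distinct pairs $(x_l,y_l)$ are independent. Then $\int h(x_a,y_{j_r})\diff\mu(x_a)=0$ kills the term. The case where $a$ occurs only as some $j_r$ is symmetric.

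This reduction is the delicate point: within a pair $(x_a,y_a)$ the coordinates are correlated, so one must check that the lone occurrence really sees only a marginal. That holds precisely because the partner coordinate is absent from the string.

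The surviving strings lie in the set $S$ of \Cref{lem:count} with string length $4l=2\cdot(2l)$, so there are at most $(4Ml)^{2l}$ of them. Each surviving term is bounded by $(\lip d)^{2l}$. Therefore
\begin{align*}
\E\abs{G_1(\theta)-G_1(\theta')}^{2l}\le M^{-2l}(4Ml)^{2l}(\lip d)^{2l}=(4l\lip d)^{2l},
\end{align*}
and the factor $M^{2l}$ produced by the count cancels exactly against the prefactor $M^{-2l}$.

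\textbf{Step 3: all moments and conclusion.} For general $p\in\N$, take $2l=2\lceil p/2\rceil\le p+1$ and apply Lyapunov's inequality. This gives $\E\abs{G_1(\theta)-G_1(\theta')}^p\le(2(p+1)\lip d)^p\le(4p\lip d)^p$. \Cref{lem:psi_norm} then yields $\norm{G_1(\theta)-G_1(\theta')}_{\psi_1}\le 8e\lip d$. Finally, \Cref{lem:psi_norm_iid} applied to the i.i.d.\ mean-zero variables $G_k(\theta)-G_k(\theta')$ gives $\norm{f_a(\theta)-f_a(\theta')}_{\psi_1}\le \frac{c\lip}{\sqrt N}\norm{\theta-\theta'}$ for an absolute constant $c$, which is the claim.
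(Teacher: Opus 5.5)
Your proposal is correct and follows the same route as the paper. You expand the even moments of the single-batch increment and kill every index string in which some letter appears exactly once, by integrating out the absent partner coordinate and then the lone marginal. You count the surviving strings with \Cref{lem:count} so that the powers of $M$ cancel, pass to odd moments by Jensen/Lyapunov, and conclude with \Cref{lem:psi_norm} and \Cref{lem:psi_norm_iid}; the only cosmetic difference is that you write the even moment as $2l$ (strings of length $4l$) where the paper takes $l$ even directly, and this gives the same constant $8e\lip$.
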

\begin{proof}
    Let $F_k(\theta) := \frac{1}{M}\sum_{i\neq j}^M q_{ij}^k(\theta)$, then $f_a(\theta) = \frac{1}{N}\sum_{k=1}^N F_k(\theta)$. Note that $F_k(\theta)$ has mean
    \begin{align*}
        \E F_k(\theta) 
        = 
        \frac{1}{M} \sum_{i\neq j}^M\int q_{ij} \prod_{m = 1}^M p^*_{mm} \diff 
        (\mu\otimes\nu)^{\otimes M}
        = 
        \frac{1}{M} \sum_{i\neq j}^M\int q_{ij} \diff 
        (\mu\otimes\nu)^{\otimes M} 
        = 
        0.
    \end{align*}
    For even $l\in\mathbb{N}$ we get
    \begin{align*}
        \E\left(\abs{F_1(\theta) - F_1(\theta')}^l\right)
        = 
        \E\left(\left(F_1(\theta) - F_1(\theta')\right)^l\right)
        =
        \frac{1}{M^l}\sum_{i_1\neq j_1}^M \cdots \sum_{i_l\neq j_l}^M\E\left(\prod_{\alpha=1}^l\left(q_{i_\alpha j_\alpha}^1(\theta) - q_{i_\alpha j_\alpha}^1(\theta')\right)\right).
    \end{align*}
    We examine the expectation term. By definition we have
    \begin{align*}
        \E\left(\prod_{\alpha=1}^l\left(q_{i_\alpha j_\alpha}^1(\theta) - q_{i_\alpha j_\alpha}^1(\theta')\right)\right)
        &= \int \prod_{\alpha=1}^l\left(q_{i_\alpha j_\alpha}(\theta) - q_{i_\alpha j_\alpha}(\theta')\right) \prod_{m=1}^M p^*_{mm} \ \diff(\mu\otimes\nu)^{\otimes M}.
    \end{align*}
    Given a set of indices as above, if there exists some $i_{\beta}$ that is unique, i.e.~$i_{\beta} \not \in \{i_ {\alpha} \mid \alpha: \alpha \neq \beta\} \cup \{j_{\alpha}\mid\alpha\}$, we can first perform the integral over $y_{i_{\beta}}$ (which only appears in the $p^{\ast}$-term, yielding $1$) and then perform the integral over $x_{i_{\beta}}$ (which only appears in one $q$-term, yielding $0$). This factor then implies that the whole integral is $0$. Similarly, if there is some unique $j_{\beta}$, the integral is $0$.
    We conclude that if there exists any value that only appears once in $(i_1,\cdots,i_l,j_1,\cdots,j_l)$ then the integral is zero. 
    Using $S,\mathcal{L}$ as defined in \Cref{lem:count} and \Cref{def:candidate} together with the equations above, we get
    \begin{align*}
        \E\left(\abs{F_1(\theta) - F_1(\theta')}^l\right)
        \leq
        \frac{1}{M^l} \abs{S} \sup_{x,y}\abs{q^\theta(x,y) - q^{\theta'}(x,y)}^l
        \leq
        \frac{1}{M^l} \abs{S} \lip^l \norm{\theta - \theta'}^l
        \leq
        \left(2 \lip \norm{\theta - \theta'} l\right)^l.
    \end{align*}
    Now for $l\in \mathbb{N}$ being odd, apply Jensen's inequality
    \begin{align*}
        \E\left(\abs{F_1(\theta) - F_1(\theta')}^l\right) 
        \leq
        \left(\E\left(\abs{F_1(\theta) - F_1(\theta')}^{l+1}\right)\right)^{\frac{l}{l+1}}
        \leq
        \left(2 \lip \norm{\theta - \theta'} (l+1)\right)^l
        \leq
        \left(4 \lip \norm{\theta - \theta'} l\right)^l.
    \end{align*}
    By \Cref{lem:psi_norm} we have $\norm{F_1(\theta) - F_1(\theta')}_{\psi_1} \leq 8e\lip   \norm{\theta - \theta'}$. Since $F_k$ are i.i.d.~and $f_a(\theta) = \frac{1}{N}\sum_{k=1}^N F_k$, by \Cref{lem:psi_norm_iid} we have $\norm{f_a(\theta) - f_a(\theta')}_{\psi_1} \leq \frac{c\lip  }{\sqrt{N}} \norm{\theta - \theta'}$ for some absolute constant $c>0$.
\end{proof}
\begin{lemma}\label{lem:fb}
    $f_b(\theta) - \E f_b(\theta)$ is a $\psi_1$-process with respect to $\frac{c\lip  }{\sqrt{N}} \norm{\cdot}$ for some absolute constant $c>0$.
\end{lemma}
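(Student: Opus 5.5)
The process $f_b$ is a plain empirical average of i.i.d.\ batch quantities, so the plan is to reduce the claim to \Cref{lem:trivial}, or equivalently to \Cref{lem:psi_norm} together with \Cref{lem:psi_norm_iid}. Write
\begin{align*}
    f_b(\theta) = \frac{1}{N}\sum_{k=1}^N G_k(\theta), \qquad G_k(\theta) := \frac{1}{M}\sum_{i=1}^M q_{ii}^k(\theta).
\end{align*}
Each $G_k(\theta)$ is a deterministic function $g^\theta$ of the batch $Z_k := ((X_i^k,Y_i^k))_{i=1}^M$, and the batches $Z_1,\ldots,Z_N$ are i.i.d.\ with law $\pi^{\otimes M}$. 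Hence $f_b(\theta) - \E f_b(\theta) = \frac{1}{N}\sum_k g^\theta(Z_k) - \E g^\theta(Z)$, which is exactly the form treated in \Cref{lem:trivial} with $\rho = \pi^{\otimes M}$ on $(\mathbb{X}\times\mathbb{Y})^M$.

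It remains to verify the uniform Lipschitz hypothesis of \Cref{lem:trivial}. For any batch $z = ((x_i,y_i))_{i=1}^M$,
\begin{align*}
    \abs{g^\theta(z) - g^{\theta'}(z)} \leq \frac{1}{M}\sum_{i=1}^M \abs{p^\theta(x_i,y_i) - p^{\theta'}(x_i,y_i)} \leq \lip \norm{\theta-\theta'},
\end{align*}
by the equi-Lipschitz property in \Cref{def:candidate}. The constants $-1$ cancel, and the average over $M$ diagonal terms makes the bound independent of $M$. Therefore $\norm{g^\theta - g^{\theta'}}_\infty \leq \lip\norm{\theta-\theta'}$, and \Cref{lem:trivial} gives that $f_b - \E f_b$ is a $\psi_1$-process with respect to $\frac{c\lip}{\sqrt N}\norm{\cdot}$. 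Note that, unlike $f_a$, the process $f_b$ is not centered, since $\E q_{ii}^k(\theta) = \int p^\theta p^* \diff\mu\otimes\nu - 1 \neq 0$ in general. This is why the statement subtracts the mean, and \Cref{lem:trivial} already accounts for it.

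I expect no real obstacle here. The only point to watch is that the centered difference $G_1(\theta)-G_1(\theta') - \E(\cdot)$ is bounded by $2\lip\norm{\theta-\theta'}$, which is already absorbed into the proof of \Cref{lem:trivial}. Equally important, no combinatorial argument like \Cref{lem:count} is needed: the diagonal sum has only $M$ terms normalised by $1/M$, so there is no hidden growth in $M$.
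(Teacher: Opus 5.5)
Your proposal is correct and follows the paper's own proof: it bounds $\abs{\frac{1}{M}\sum_{i=1}^M (q^1_{ii}(\theta) - q^1_{ii}(\theta'))} \leq \lip\norm{\theta-\theta'}$ uniformly and then applies \Cref{lem:trivial}, with each whole batch treated as one i.i.d.\ sample. Your remarks that $f_b$ is not centered and that no combinatorial count is needed are accurate.
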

\begin{proof}
    Note that $\abs{\frac{1}{M}\sum_{i=1}^M \left(q^1_{ii}(\theta) - q^1_{ii}(\theta')\right)} \leq \lip   \norm{\theta - \theta'}$ and apply \Cref{lem:trivial}.
\end{proof}

\begin{lemma}\label{lem:fc}
    $f_c(\theta) - \E f_c(\theta)$ is a $\psi_1$-process with respect to $\frac{c\lip U  }{\sqrt{N}} \norm{\cdot}$ for some absolute constant $c>0$.
\end{lemma}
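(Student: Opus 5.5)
The idea is to show that $f_c$ is an empirical average of i.i.d.\ functions of the batches whose increments in $\theta$ are uniformly bounded by a multiple of $\lip U\norm{\theta-\theta'}$. Then \Cref{lem:trivial} applies directly; unlike $f_a$, no combinatorial moment counting is needed.

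\emph{Rewriting $f_c$.} Set $h(u) := u - \log(1+u)$. Using $\frac{1}{M}p^k_{ij}(\theta) + \frac{M-1}{M} = 1 + \frac{1}{M}q^k_{ij}(\theta)$, we get
\begin{align*}
    f_c(\theta) = \frac{1}{N}\sum_{k=1}^N G\big((X^k_i,Y^k_i)_{i=1}^M;\theta\big),
    \qquad
    G\big((x_i,y_i)_{i=1}^M;\theta\big) := \sum_{i,j=1}^M h\Big(\tfrac{1}{M}q_{ij}(\theta)\Big).
\end{align*}
The batches $(X^k_i,Y^k_i)_{i=1}^M$, $k=1,\dots,N$, are i.i.d.\ with law $\pi^{\otimes M}$. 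Hence $f_c(\theta)-\E f_c(\theta)$ has exactly the form considered in \Cref{lem:trivial}, with $g^\theta := G(\cdot;\theta)$ defined on $(\mathbb{X}\times\mathbb{Y})^M$ and $\rho := \pi^{\otimes M}$.

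\emph{Lipschitz bound.} Since $p^\theta \ge 0$, we have $q_{ij}(\theta)\in[-1,U-1]$. Also $U\ge 1$, because $p^\theta$ integrates to $1$ against the probability measure $\mu\otimes\nu$. So the argument $u = q_{ij}(\theta)/M$ lies in $[-1/M,(U-1)/M]$, and since $M\ge 2$ this gives $1+u\ge 1/2$. On this interval,
\begin{align*}
    h'(u) = \frac{u}{1+u}, \qquad \abs{h'(u)}\le 2\abs{u}\le \frac{2U}{M}.
\end{align*}
By the mean value theorem and the equi-Lipschitz property from \Cref{def:candidate},
\begin{align*}
    \Big|h\Big(\tfrac{1}{M}q_{ij}(\theta)\Big) - h\Big(\tfrac{1}{M}q_{ij}(\theta')\Big)\Big|
    \le \frac{2U}{M}\cdot\frac{\lip\norm{\theta-\theta'}}{M}.
\end{align*}
Summing over the $M^2$ pairs $(i,j)$ cancels the factor $M^{-2}$. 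This gives $\norm{g^\theta-g^{\theta'}}_\infty \le 2U\lip\norm{\theta-\theta'}$, uniformly in $M$.

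\emph{Conclusion.} \Cref{lem:trivial} with $L = 2U\lip$ shows that $f_c-\E f_c$ is a $\psi_1$-process with respect to $\frac{c\lip U}{\sqrt N}\norm{\cdot}$, after absorbing the factor $2$ into the absolute constant. The only delicate step is obtaining the $M$-uniform bound, and it works because of the second-order cancellation: $h$ vanishes to second order at $0$, so $h'$ is of size $O(1/M)$. This extra $1/M$, together with the $1/M$ inside the argument, offsets the $M^2$ terms of the double sum. The lower bound $1+u\ge 1/2$, which uses $M\ge 2$, keeps $h'$ controlled.
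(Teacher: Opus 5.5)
Your proof is correct and is essentially the paper's argument: both show that the per-batch function $\sum_{i,j}\bigl(\tfrac1M q_{ij}(\theta)-\log(1+\tfrac1M q_{ij}(\theta))\bigr)$ is $2U\lip$-Lipschitz in $\theta$ uniformly in $M$, and then invoke \Cref{lem:trivial}. The paper bounds the $\theta$-gradient $\frac{1}{M^2}\sum_{i,j}\frac{(1-p_{ij})\nabla_\theta p_{ij}}{\frac1M p_{ij}+\frac{M-1}{M}}$, and your mean value argument for $h$ is the same computation via the chain rule; it has the small advantage of using the equi-Lipschitz assumption directly, with no need to turn a gradient bound into a Lipschitz bound on $\Theta$.
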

\begin{proof}
    We have
    \begin{align*}
        \nabla_\theta \sum_{i,j=1}^M\log\left(\frac{1}{M}p^1_{ij}(\theta) + \frac{M-1}{M}\right) 
        -
        \nabla_\theta \frac{1}{M}\sum_{i,j=1}^M q_{ij}^1(\theta)
        &= 
        \sum_{i,j=1}^M\left(\frac{\frac{1}{M}\nabla_\theta p^1_{ij}(\theta)}{\frac{1}{M}p^1_{ij}(\theta) + \frac{M-1}{M}} - \frac{1}{M}\nabla_\theta p_{ij}^1(\theta)\right)
        \\&=
        \frac{1}{M^2}\sum_{i,j=1}^M \frac{(1 - p_{ij}^1(\theta)) \nabla_\theta p_{ij}^1(\theta)}{\frac{1}{M}p^1_{ij}(\theta) + \frac{M-1}{M}}.
    \end{align*}
    Using the assumption $M \geq 2$, this implies
    \begin{align*}
        \norm{-\nabla_\theta \sum_{i,j=1}^M\log\left(\frac{1}{M}p^1_{ij}(\theta) + \frac{M-1}{M}\right) + \nabla_\theta \frac{1}{M}\sum_{i,j=1}^M q_{ij}^1(\theta)}_{2} \leq 2 \lip U.
    \end{align*}
    Applying \Cref{lem:trivial} finishes the proof.
\end{proof}
\begin{lemma}\label{lem:fa_fix}
    For fixed $\theta_0\in \Theta$, there is an absolute constant $c>0$ such that
    \begin{align*}
        \Pb\left(\abs{f_a(\theta_0)} \geq t\right) \leq 2 \exp\left(-c\min\Big\{\frac{t^2 N}{U^2}, \frac{tN}{U}\Big\}\right).
    \end{align*}
\end{lemma}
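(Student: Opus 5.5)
The plan is to repeat the moment computation from the proof of \Cref{lem:fa}, with the increment $q^1_{ij}(\theta)-q^1_{ij}(\theta')$ replaced by the single function $q^1_{ij}(\theta_0)$. I will then conclude with the Bernstein inequality (\Cref{thm:Bernstein}).

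Write $f_a(\theta_0)=\frac1N\sum_{k=1}^N F_k(\theta_0)$ with $F_k(\theta_0)=\frac1M\sum_{i\neq j}q^k_{ij}(\theta_0)$. These are i.i.d.\ across $k$, and they have mean zero by the computation at the start of the proof of \Cref{lem:fa}. For even $l$ I expand
\begin{align*}
\E\bigl(F_1(\theta_0)^l\bigr)=\frac{1}{M^l}\sum_{i_1\neq j_1}\cdots\sum_{i_l\neq j_l}\int\prod_{\alpha=1}^l q_{i_\alpha j_\alpha}(\theta_0)\prod_{m=1}^M p^*_{mm}\,\diff(\mu\otimes\nu)^{\otimes M}.
\end{align*}
The vanishing argument is the same as before. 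If some index value appears only once among $(i_1,\dots,j_l)$, I first integrate out the corresponding $y$ (or $x$) in $p^*$, which gives $1$. Then I integrate the single $q$-factor over its remaining variable, which gives $0$, because $q$ integrates to $0$ in each marginal. So only index strings in the set $S$ of \Cref{lem:count} contribute.

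Each surviving term is bounded by $\sup|q(\theta_0)|^l$. Since $p^\theta\in[0,U]$, we have $|q|\le\max(1,U-1)\le U+1$, which is a constant multiple of $U$ provided $U\ge1$. This holds because $p^\theta$ integrates to $1$ against a probability measure, forcing $U\ge1$. Hence
\[
\E F_1(\theta_0)^l\le M^{-l}(2Ml)^l(2U)^l=(4Ul)^l.
\]
For odd $l$ I pass to $l+1$ via Jensen exactly as in \Cref{lem:fa}. \Cref{lem:psi_norm} then gives $\norm{F_1(\theta_0)}_{\psi_1}\le cU$ for an absolute constant $c$.

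Finally, \Cref{thm:Bernstein} applied to the i.i.d.\ mean-zero variables $F_k(\theta_0)$ yields
\[
\Pb\bigl(|f_a(\theta_0)|\ge t\bigr)\le 2\exp\Bigl(-c\min\Bigl\{\frac{t^2N}{U^2},\frac{tN}{U}\Bigr\}\Bigr)
\]
after absorbing constants. The only real content is the cancellation of all index strings containing a unique value, which keeps the $l$-th moment of order $l^l$ rather than of order $M^l$. That step is already done in \Cref{lem:fa} and \Cref{lem:count}, so there is no new obstacle. The one point to check is that the bound $|q|\lesssim U$ holds uniformly, which is the $U\ge1$ observation above.
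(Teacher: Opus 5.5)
Your proposal is correct and follows the paper's proof essentially step for step. Both expand the even moments of $F_1(\theta_0)$, discard every index string containing a value that appears only once, bound the survivors via the combinatorial count, handle odd moments by Jensen, convert to a $\psi_1$-norm bound of order $U$, and finish with Bernstein. Your observation that $U\ge 1$, because $p^\theta$ has integral $1$ against a probability measure, makes explicit what the paper uses implicitly when it writes $\sup|p^\theta-1|\le U$.
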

\begin{proof}
    The proof works very similarly to that of \Cref{lem:fa}.
    Let $F_k(\theta_0) = \frac{1}{M}\sum_{i\neq j} q_{ij}^k(\theta_0)$. For even $l\in \mathbb{N}$ we get
    \begin{align*}
        \E\left(\abs{F_1(\theta_0)}^l\right) 
        &= 
        \frac{1}{M^l}
        \sum_{i_1\neq j_1}\cdots\sum_{i_l\neq j_l}
        \E\left(\prod_{\alpha = 1}^l q^1_{i_\alpha j_\alpha}\right)
        \leq 
        \left(2l\sup_{x,y,\theta}\abs{p^\theta (x,y) - 1}\right)^{\hspace{-2pt}l} 
        \leq \left(2lU\right)^l.
    \end{align*}
    Using Jensen to get the bound for odd $l$, then by \Cref{lem:psi_norm} we have $\norm{F_1}_{\psi_1} \leq 8eU$. 
    Now apply \Cref{thm:Bernstein} to finish the proof.
\end{proof}
\begin{lemma} \label{lem:fb_fix}
    For fixed $\theta_0\in \Theta$ we have
    \begin{align*}
        \Pb\left(\abs{f_b(\theta_0) - \E f_b(\theta_0)} \geq t\right)\leq 2\exp\left(-\frac{t^2 N}{2U^2}\right).
    \end{align*}
\end{lemma}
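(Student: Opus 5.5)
We apply Hoeffding's inequality (\Cref{lem:Hoeffding}) directly, viewing $f_b(\theta_0)$ as an average over batches. Set
\begin{align*}
    G_k := \frac{1}{M}\sum_{i=1}^M q_{ii}^k(\theta_0), \qquad k = 1,\ldots,N,
\end{align*}
so that $f_b(\theta_0) = \frac{1}{N}\sum_{k=1}^N G_k$. The batches $((X_i^k,Y_i^k)_{i=1}^M)_{k=1}^N$ are i.i.d.\ in $k$, and $G_k$ depends only on batch $k$, so the $G_k$ are i.i.d. It follows that $\E f_b(\theta_0) = \E G_1$.

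Next we bound $G_k$ uniformly. Each $q_{ii}^k(\theta_0) = p^{\theta_0}(X_i^k,Y_i^k) - 1$ lies in $[-1, U-1]$, since $0 \le p^\theta \le U$ by \Cref{def:candidate}. (Nonnegativity of $p^\theta$ is implicit: it is a density, and the proof of \Cref{thm:mean_pseudo} already uses $q_{11}\in[-1,U-1]$.) Moreover $U \ge 1$, because $p^\theta$ integrates to $1$ against $\mu$, so $|q_{ii}^k(\theta_0)| \le \max\{1, U-1\} \le U$. Averaging over $i$ gives $|G_k| \le U$.

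Applying \Cref{lem:Hoeffding} with $F_k = G_k$ and $C = U$ yields
\begin{align*}
    \Pb\left(\abs{f_b(\theta_0) - \E f_b(\theta_0)} \geq t\right) = \Pb\left(\abs{\frac{1}{N}\sum_{k=1}^N G_k - \E G_1} \geq t\right) \le 2\exp\left(-\frac{t^2 N}{2U^2}\right),
\end{align*}
which is the claim.

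The only step needing care is the uniform bound $|G_k| \le U$, which requires $p^\theta \ge 0$ and $U \ge 1$. If $p^\theta$ were only known to satisfy $|p^\theta| \le U$, we would get $|q| \le U+1 \le 2U$ instead, changing the constant $2$ in the exponent. Everything else is a direct application of \Cref{lem:Hoeffding}. Note that the dependence between the $M$ pairs inside a batch never enters, because we only use independence across batches.
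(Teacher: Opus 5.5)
Your proof is correct and follows the same route as the paper: bound the per-batch average $\frac{1}{M}\sum_{i=1}^M q_{ii}^k(\theta_0)$ uniformly by $U$ and apply \Cref{lem:Hoeffding} across the i.i.d.\ batches with $C = U$. Your justification of $\abs{p^\theta - 1} \leq U$ via $p^\theta \geq 0$ and $U \geq 1$ fills in a step that the paper states without comment.
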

\begin{proof}
    Note that $\abs{\frac{1}{M}\sum_{i=1}^M q_{ii}^1(\theta)} \leq \sup_{x,y,\theta}\abs{p^\theta (x,y) - 1} \leq U$. Apply \Cref{lem:Hoeffding}.
\end{proof}
\begin{lemma} \label{lem:fc_fix}
    For fixed $\theta_0\in \Theta$ we have
    \begin{align*}
        \Pb\left(\abs{f_c(\theta_0) - \E f_c(\theta_0)} \geq t\right)\leq 2\exp\left(-\frac{t^2 N}{8U^4}\right).
    \end{align*}
\end{lemma}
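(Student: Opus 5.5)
The plan is to write $f_c(\theta_0)$ as an empirical mean of i.i.d.\ bounded random variables and then apply Hoeffding's inequality, \Cref{lem:Hoeffding}, with the constant $C = 2U^2$. Indeed $2C^2 = 8U^4$ gives exactly the exponent $-t^2N/(8U^4)$ in the claim.

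Concretely, I write $f_c(\theta_0) = \frac{1}{N}\sum_{k=1}^N G_k$ with
\begin{align*}
    G_k := \sum_{i,j=1}^M \Big( \tfrac{1}{M} q_{ij}^k(\theta_0) - \log\big(1 + \tfrac{1}{M} q_{ij}^k(\theta_0)\big) \Big)
    = \sum_{i,j=1}^M h\big(\tfrac{1}{M} q_{ij}^k(\theta_0)\big), \qquad h(x) := x - \log(1+x).
\end{align*}
This uses $\frac{1}{M}p + \frac{M-1}{M} = 1 + \frac{1}{M}q$. The $G_k$ are i.i.d.\ across batches because the batches are independent, so it remains to show $\abs{G_k} \leq 2U^2$ almost surely.

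For the pointwise bound, note that $h \geq 0$ on $(-1,\infty)$. The argument $x = q_{ij}^k/M$ lies in $[-1/M, (U-1)/M]$, and in particular $x \geq -1/2$ since $M \geq 2$. I use the elementary estimates
\begin{itemize}
    \item $h(x) \leq x^2/2$ for $x \geq 0$, by Taylor's theorem with $h''(\xi) = (1+\xi)^{-2} \leq 1$;
    \item $h(x) \leq x^2/(2(1+x)^2) \leq 2x^2$ for $x \in [-1/2, 0]$.
\end{itemize}
Hence $0 \leq h(q_{ij}^k/M) \leq 2\,(q_{ij}^k)^2/M^2$.

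Next I need $\abs{q_{ij}^k} \leq U$. We have $q = p - 1 \in [-1, U-1]$, and $U \geq 1$ necessarily, since $p^\theta \leq U$ and $p^\theta$ integrates to $1$ against the probability measure $\mu$. Therefore $\abs{q} \leq \max\{1, U-1\} \leq U$. Summing the $M^2$ terms gives $0 \leq G_k \leq M^2 \cdot 2U^2/M^2 = 2U^2$.

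Hoeffding then yields $\Pb(\abs{f_c(\theta_0) - \E f_c(\theta_0)} \geq t) \leq 2\exp(-t^2N/(8U^4))$. The only mildly delicate step is the uniform-in-$M$ bound on $h$ near $x = -1/M$; the lower bound $x \geq -1/2$ coming from $M \geq 2$ is exactly what makes it harmless. Everything else is routine.
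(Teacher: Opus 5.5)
Your proposal is correct and follows the paper's proof essentially verbatim: you write $f_c(\theta_0)$ as an average of i.i.d.\ per-batch terms, bound each by $2U^2$ via the second-order Taylor remainder of $\log(1+x)$ (with the denominator kept away from zero by $M \geq 2$), and apply \Cref{lem:Hoeffding} with $C = 2U^2$. Your explicit treatment of negative arguments $q_{ij}^k/M \in [-1/2,0]$ and of $\abs{q} \leq U$ via $U \geq 1$ is slightly more careful than the paper's write-up, which states the Taylor form only for $x > 0$ and takes $\sup\abs{p^\theta - 1} \leq U$ without comment.
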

\begin{proof}
    Note that for any $x > 0$ there is a $\xi = \xi(x)$ between $1$ and $1+x$ such that $\log(1+x) = x - \frac{x^2}{2\xi^2}$. Therefore
    \begin{align*}
        \abs{\sum_{i,j=1}^M \left(\log\Big(1 + \frac{1}{M}q_{ij}^1(\theta)\Big) - \frac{1}{M}q_{ij}^1(\theta)\right)}
        &= 
        \abs{\frac{1}{2M^2}\sum_{i,j=1}^M \frac{(q_{ij}^1(\theta))^2}{\xi^2}}
        \leq 2\sup_{x,y,\theta}\abs{p^\theta(x,y)-1}^2 \leq 2U^2
    \end{align*}
    in which $\xi$ is between $1$ and $1+\frac{1}{M}q_{ij}^1$, and since $M\geq 2$ we have $\inf\abs{\xi} \geq 1/2$. Apply \Cref{lem:Hoeffding}.
\end{proof}

\begin{proof}[Proof of \Cref{thm:main_detailed}]
    By \Cref{lem:fa_fix} $f_a$ is bounded at a single point with high probability, by \Cref{lem:fa} it is a $\psi_1$-process. \Cref{lem:Dudley_iid} then bounds pairwise differences with high probability. Finally, \Cref{lem:aux} bounds $f_a$ uniformly with high probability. We obtain for some absolute constant $c$,
    \begin{align*}
        \Pb\bk{\sup_\theta\abs{f_a(\theta)} \geq 2t}
        &\leq
        2 \exp\left(-c\min\Big\{\frac{t^2 N}{U^2}, \frac{tN}{U}\Big\}\right)
        +
        2\exp\left(\frac{J}{D}\right)\exp\left(-\frac{ct\sqrt{N}}{D\mathcal{L}}\right).
    \end{align*}
    Similarly for $f_b(\theta) - \E f_b(\theta)$ (using Lemmata \ref{lem:fb} and \ref{lem:fb_fix}) and $f_c(\theta) - \E f_c(\theta)$ (using Lemmata \ref{lem:fc} and \ref{lem:fc_fix}) we get
    \begin{align*}
        \Pb\bk{\sup_\theta\abs{f_b(\theta) - \E f_b(\theta)} \geq 2t}
        &\leq 
        2\exp\left(-\frac{t^2 N}{2U^2}\right)
        +
        2\exp\left(\frac{J}{D}\right)\exp\left(-\frac{ct\sqrt{N}}{D\mathcal{L}}\right),
        \\
        \Pb\bk{\sup_\theta\abs{f_c(\theta) - \E f_c(\theta)} \geq 2t}
        &\leq
        2\exp\left(-\frac{t^2 N}{8U^4}\right)
        +
        2\exp\left(\frac{J}{D}\right)\exp\left(-\frac{ct\sqrt{N}}{D\mathcal{L}U}\right).
    \end{align*}
    Use \Cref{eq:ql_decompose} with union bound to finish the proof.
\end{proof}

\subsection[Concentration of gradient of f\_M\textasciicircum{}N(theta)]{Concentration of $\nabla\pl(\theta)$}\label{sec:concentrationGradient}

In this subsection we prove a similar bound on $\sup_\theta ||\nabla\pl(\theta) - \nabla\epl(\theta)||_2$ for the gradient by using the same strategy as in Section \ref{sec:concentration}.
Since $\Theta$ is finite-dimensional, we can bound the norm entry-wise. 
We abbreviate $\frac{\partial}{\partial \theta_l}$ as $\partial_l$. 
As before, we first decompose
\begin{align*}
    \partial_l\pl(\theta) &= -\frac{1}{N}\sum_{k=1}^N \sum_{i,j=1}^M \frac{\frac{1}{M}\partial_l p_{ij}^k(\theta)}{\frac{1}{M}p_{ij}^k(\theta) + \frac{M-1}{M}} \ \Big(1 - \frac{1}{M} + \frac{1}{M} - \frac{1}{M}p_{ij}^k(\theta) + \frac{1}{M}p_{ij}^k(\theta)\Big)
    \\&=
    -\underbrace{\frac{1}{NM}\sum_{k=1}^N \sum_{i\neq j}^M \partial_l p_{ij}^k(\theta)}_{f'_a(\theta)}
    -\underbrace{\frac{1}{NM}\sum_{k=1}^N \sum_{i=1}^M \partial_l p_{ii}^k(\theta)}_{f'_b(\theta)}
    +\underbrace{\frac{1}{NM^2}\sum_{k=1}^N \sum_{i, j=1}^M \frac{(p_{ij}^k(\theta) - 1)\partial_l p_{ij}^k(\theta)}{\frac{1}{M}p_{ij}^k(\theta) + \frac{M-1}{M}}}_{f'_c(\theta)}
\end{align*}
\noindent
and then use the triangle inequality to get
\begin{align*}
    \sup_{\theta\in\Theta}\abs{\partial_l\pl(\theta) - \partial_l\epl(\theta)} 
    \leq 
    \sup_{\theta\in\Theta}\abs{f'_a(\theta) - \E f'_a(\theta)} + 
    \sup_{\theta\in\Theta}\abs{f'_b(\theta) - \E f'_b(\theta)} +
    \sup_{\theta\in\Theta}\abs{f'_c(\theta) - \E f'_c(\theta)}.
\end{align*}
\noindent
The proofs of the following lemmata, unless otherwise stated, are exactly the same as the proofs of the corresponding lemmata in the previous section, obtained by simply replacing $q_{ij}^k(\theta)$ with $\partial_l p_{ij}^k(\theta)$. 
Note that $\int \partial_l p_{ij}^k(\theta) \diff \mu(x) = \partial_l \int p_{ij}^k \diff\mu(x) = \partial_l 1 = 0$ and the same for the other marginal.
We are allowed to swap the integration and differentiation due to \Cref{thm:interchange} since $p_{ij}^k$ and $\partial_l p_{ij}^k$ are uniformly bounded by \Cref{def:candidate}.
\begin{lemma}\label{lem:2fa}
    $f'_a(\theta)$ is a $\psi_1$-process with respect to $\frac{c\lip'}{\sqrt{N}}\norm{\cdot}$, in which $c>0$ is some absolute constant and $\norm{\cdot}$ is the original norm on $\Theta$. 
\end{lemma}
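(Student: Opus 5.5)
We follow the proof of \Cref{lem:fa}, replacing $q_{ij}^k(\theta)$ by $\partial_l p_{ij}^k(\theta)$ throughout. Set $G_k(\theta) := \frac{1}{M}\sum_{i\neq j}\partial_l p_{ij}^k(\theta)$, so that $f'_a(\theta) = \frac1N\sum_{k=1}^N G_k(\theta)$ with the $G_k$ i.i.d.

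The first step is to show that $G_1(\theta)$, and hence $G_1(\theta)-G_1(\theta')$, has mean zero. By \Cref{thm:interchange} we may swap differentiation and integration, since $p^\theta$ and its first and second derivatives in $\theta$ are equibounded on $\Theta$ by \Cref{def:candidate}. This gives
\begin{align*}
\int \partial_l p^\theta(x,y)\diff\mu(x) = \partial_l 1 = 0,
\end{align*}
and similarly for the $\nu$-marginal. The same computation as in \Cref{lem:fa} then shows that each off-diagonal term $i\neq j$ integrates to zero against $\prod_m p^*_{mm}$.

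Next, for even $l$ we expand $\E\bigl((G_1(\theta)-G_1(\theta'))^l\bigr)$ into a sum over index tuples $(i_1,j_1,\dots,i_l,j_l)$ with $i_\alpha\neq j_\alpha$. If some index value occurs only once, say $i_\beta$, we integrate first over $y_{i_\beta}$, which appears only in $p^*_{i_\beta i_\beta}$ and so gives $1$. We then integrate over $x_{i_\beta}$, which now appears only in the single factor $\partial_l p_{i_\beta j_\beta}(\theta)-\partial_l p_{i_\beta j_\beta}(\theta')$, and this gives $0$ by the marginal identity above. A unique $j_\beta$ is handled the same way. Hence only tuples in the set $S$ of \Cref{lem:count} contribute, and $|S|\le (2Ml)^l$.

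For the surviving terms we need the sup-norm bound
\begin{align*}
|\partial_l p^\theta - \partial_l p^{\theta'}| \le \lip'\norm{\theta-\theta'}.
\end{align*}
This follows from the mean value theorem along the segment from $\theta$ to $\theta'$, using that second-order directional derivatives are bounded by $\lip'$. This step is the only point needing care. If $\Theta$ is not convex, the segment may leave $\Theta$; in that case we work with the segment within the closure, or note that the bound is only needed locally and combine it with the uniform bound $2\lip$ at scale $D$. We would simply assume convexity, or invoke the paper's implicit use of this fact. Together these give
\begin{align*}
\E\bigl(|G_1(\theta)-G_1(\theta')|^l\bigr) \le M^{-l}(2Ml)^l(\lip')^l\norm{\theta-\theta'}^l = (2\lip' l\norm{\theta-\theta'})^l.
\end{align*}

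Finally, for odd $l$ we apply Jensen's inequality exactly as in \Cref{lem:fa}, which gives the bound $(4\lip' l\norm{\theta-\theta'})^l$. \Cref{lem:psi_norm} then yields $\norm{G_1(\theta)-G_1(\theta')}_{\psi_1}\le 8e\lip'\norm{\theta-\theta'}$. Since the differences are mean-zero and i.i.d., \Cref{lem:psi_norm_iid} gives $\norm{f'_a(\theta)-f'_a(\theta')}_{\psi_1}\le \frac{c\lip'}{\sqrt N}\norm{\theta-\theta'}$, as claimed.
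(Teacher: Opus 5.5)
Your proof is correct and is essentially the paper's argument: the paper reruns the proof of \Cref{lem:fa} with $q_{ij}^k(\theta)$ replaced by $\partial_l p_{ij}^k(\theta)$, using \Cref{thm:interchange} to get $\int \partial_l p^\theta \diff\mu = \int \partial_l p^\theta \diff\nu = 0$, exactly as you do. You are also right that the bound $\abs{\partial_l p^\theta - \partial_l p^{\theta'}} \le \lip'\norm{\theta-\theta'}$ tacitly needs the segment from $\theta$ to $\theta'$ to stay in $\Theta$ (e.g.\ convexity), which the paper uses without comment; your fallback of a local bound combined with $2\lip$ at scale $D$ would not rescue this in general (for a slit domain $\partial_l p^\theta$ can jump across the slit), so explicitly assuming convexity is the right choice.
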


\begin{lemma}\label{lem:2fb}
    $f'_b(\theta) - \E f'_b(\theta)$ is a $\psi_1$-process with respect to $\frac{c\lip'  }{\sqrt{N}} \norm{\cdot}$ for some absolute constant $c>0$.
\end{lemma}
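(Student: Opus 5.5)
The plan is to mirror the proof of \Cref{lem:fb} verbatim, with $q_{ii}^k(\theta)$ replaced by $\partial_l p_{ii}^k(\theta)$. Write
\begin{align*}
    f'_b(\theta) = \frac{1}{N}\sum_{k=1}^N G_k(\theta), \qquad G_k(\theta) := \frac{1}{M}\sum_{i=1}^M \partial_l p^\theta(X_i^k,Y_i^k).
\end{align*}
The $G_k$ are i.i.d.\ across batches, because the batches are independent. Hence $f'_b(\theta)-\E f'_b(\theta)$ has exactly the form $\frac1N\sum_k g^\theta(Z_k)-\E g^\theta(Z)$ treated in \Cref{lem:trivial}. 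Here $Z_k := ((X_i^k,Y_i^k))_{i=1}^M$ is distributed according to $\pi^{\otimes M}$, and
\begin{align*}
    g^\theta(z) := \frac1M\sum_{i=1}^M \partial_l p^\theta(x_i,y_i).
\end{align*}

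The only thing to verify is the uniform Lipschitz bound
\begin{align*}
    \norm{g^\theta-g^{\theta'}}_\infty\le \lip'\norm{\theta-\theta'}.
\end{align*}
For fixed $(x,y)$, the map $\theta\mapsto\partial_l p^\theta(x,y)$ is $C^1$ on $\Theta$, since $p^\theta$ is twice continuously differentiable. Its gradient has entries $\partial_m\partial_l p^\theta(x,y)$, which are second-order directional derivatives. By \Cref{def:candidate} they are bounded by $\lip'$, up to a dimensional factor if one bounds the full gradient norm rather than single directional derivatives. That factor is harmless: it can be absorbed into $c$, or avoided by noting that $\langle \nabla \partial_l p^\theta, v\rangle$ is a mixed second directional derivative bounded by $\lip'\norm{v}$ via polarization up to a factor 3.

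The mean value theorem along the segment from $\theta$ to $\theta'$ then gives
\begin{align*}
    \abs{\partial_l p^\theta(x,y)-\partial_l p^{\theta'}(x,y)}\le C\lip'\norm{\theta-\theta'}.
\end{align*}
The only subtle point is that this segment must stay in $\Theta$. If $\Theta$ is not convex, I would either assume convexity, which is implicit in the paper's use of the same step in \Cref{lem:fc}, or pass to a path inside $\Theta$. Averaging over $i$ preserves the bound, so $g^\theta$ satisfies the hypothesis of \Cref{lem:trivial} with $L=C\lip'$.

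\Cref{lem:trivial} then yields that $f'_b(\theta)-\E f'_b(\theta)$ is a $\psi_1$-process with respect to $\frac{c\lip'}{\sqrt N}\norm{\cdot}$, which is the claim. No cancellation argument of the kind used in \Cref{lem:fa} is needed here, because $g^\theta$ is an average of only $M$ terms and is therefore bounded uniformly in $M$. The main obstacle is thus merely the Lipschitz estimate for $\partial_l p^\theta$ in $\theta$, specifically how the stated bound on second directional derivatives translates into a Lipschitz constant.
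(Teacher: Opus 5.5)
Your proposal is correct and follows the paper's own route: the paper proves \Cref{lem:2fb} by repeating the proof of \Cref{lem:fb} with $q_{ii}^k(\theta)$ replaced by $\partial_l p_{ii}^k(\theta)$, i.e.\ bounding the increment of the batch average by $\lip'\norm{\theta-\theta'}$ and applying \Cref{lem:trivial}. Your caveats refine points the paper leaves implicit. The mean value step does need convexity of $\Theta$ (or at least quasi-convexity; an arbitrary path inside $\Theta$ would not give a Euclidean Lipschitz bound). The polarization factor is actually unnecessary: for the symmetric Hessian, the operator norm equals the supremum of the pure second directional derivatives over unit vectors.
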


\begin{lemma}\label{lem:2fc}
    $f'_c(\theta) - \E f'_c(\theta)$ is a $\psi_1$-process w.r.t.~$\frac{cU(\lip^2+\lip')  }{\sqrt{N}} \norm{\cdot}$ for some absolute constant $c>0$. 
\end{lemma}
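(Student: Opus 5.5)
Following the pattern of \Cref{lem:fc}, I will apply \Cref{lem:trivial} to the i.i.d.\ summands
\begin{align*}
    G_k(\theta) := \frac{1}{M^2}\sum_{i,j=1}^M h\big(p_{ij}^k(\theta)\big)\,\partial_l p_{ij}^k(\theta),
    \qquad
    h(s) := \frac{s-1}{\frac{1}{M}s + \frac{M-1}{M}},
\end{align*}
so that $f'_c(\theta) = \frac{1}{N}\sum_{k=1}^N G_k(\theta)$. \Cref{lem:trivial} only needs the deterministic bound $\sup_{x,y}\abs{g^\theta - g^{\theta'}} \leq L\norm{\theta-\theta'}$ for $g^\theta := \frac{1}{M^2}\sum_{i,j} h(p_{ij}(\theta))\partial_l p_{ij}(\theta)$, viewed as a function of the whole batch. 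Since $G_k$ is an average of $M^2$ terms, it suffices to show that each map $\theta\mapsto h(p^\theta(x,y))\,\partial_l p^\theta(x,y)$ is Lipschitz on $\Theta$, uniformly in $(x,y)$ and $M\geq 2$, with constant $cU(\lip^2+\lip')$. Then $L = cU(\lip^2+\lip')$, and \Cref{lem:trivial} gives the claim with the asserted norm.

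For the Lipschitz bound I will differentiate in $\theta$ along a direction $v$. Because $\Theta$ is open but not necessarily convex, I first bound the directional derivative pointwise and then pass to a Lipschitz bound along segments. The same issue arises implicitly for ``equi-Lipschitz'' in \Cref{def:candidate}, so I adopt the paper's convention there. By the product rule,
\begin{align*}
    \partial_v\big[h(p^\theta)\partial_l p^\theta\big] = h'(p^\theta)\,\partial_v p^\theta\,\partial_l p^\theta + h(p^\theta)\,\partial_v\partial_l p^\theta .
\end{align*}
Next I bound $h$ and $h'$ uniformly in $M$. The denominator is $\frac{1}{M}s+\frac{M-1}{M} \geq \frac{M-1}{M}\geq \frac12$ for $s\geq 0$ and $M\geq 2$. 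Hence $\abs{h(s)} \leq 2\abs{s-1} \leq 2\max(U,1)$, which is $\lesssim U$ once we note $U\geq 1$ (the densities integrate to $1$ against a probability measure). Computing the derivative,
\begin{align*}
    h'(s) = \frac{\frac{M-1}{M}+\frac1M}{\big(\frac{1}{M}s+\frac{M-1}{M}\big)^2} = \frac{1}{\big(\frac{1}{M}s+\frac{M-1}{M}\big)^2} \leq 4 .
\end{align*}
With $\abs{\partial_v p^\theta}\leq \lip\norm{v}$, $\abs{\partial_l p^\theta}\leq\lip$ and $\abs{\partial_v\partial_l p^\theta}\leq \lip'\norm{v}$ (from \Cref{def:candidate}), the directional derivative is at most $(4\lip^2 + 2U\lip')\norm{v} \leq 4U(\lip^2+\lip')\norm{v}$, using $U\geq1$ again.

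The only delicate points are the uniformity in $M$ of the bounds on $h$ and $h'$, handled by $M\geq 2$, and the passage from a derivative bound to a Lipschitz bound, handled as above. Centering is automatic, since \Cref{lem:trivial} subtracts the expectation.
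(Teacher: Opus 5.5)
Your proposal is correct and is essentially the paper's proof: you bound the $\theta$-derivative of a single summand $\frac{(p^\theta-1)\partial_l p^\theta}{\frac1M p^\theta+\frac{M-1}{M}}$ by $4U(\lip^2+\lip')$, uniformly in $(x,y)$ and $M\geq 2$ and using $U\geq 1$, and then invoke \Cref{lem:trivial}. Your identity $h'(s)=(\frac1M s+\frac{M-1}{M})^{-2}$ just merges the two quotient-rule terms that the paper bounds separately by $2\lip^2+2U\lip'$ and $4U\lip^2/M$, and the paper glosses over the derivative-to-Lipschitz passage on a possibly non-convex $\Theta$ that you flag in exactly the same way.
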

\begin{proof}
We proceed analogously to the proof of \Cref{lem:fc}. First we bound the Lipschitz constant
\begin{align*}
\norm{\nabla_\theta\frac{1}{M^2}\sum_{i, j=1}^M \frac{q_{ij}\partial_l p_{ij}(\theta)}{\frac{1}{M}p_{ij}(\theta) + \frac{M-1}{M}}}_2
&\leq 
\sup_{x_1,y_1}\norm{\nabla_\theta \frac{q_{11}(\theta)\partial_l p_{11}(\theta)}{\frac{1}{M}p_{11}(\theta) + \frac{M-1}{M}}}_2
\\&\leq
\sup_{x_1,y_1}\norm{\frac{\nabla_\theta[q_{11}(\theta)\partial_l p_{11}(\theta)]}{\frac{1}{M}p_{11}(\theta) + \frac{M-1}{M}}
-
\frac{q_{11}(\theta)\partial_l p_{11}(\theta)\nabla_\theta(\frac{1}{M}p_{11}(\theta) + \frac{M-1}{M})}{(\frac{1}{M}p_{11}(\theta) + \frac{M-1}{M})^2}
}_2
\\&\leq
2\lip^2 + 2U\lip' + \frac{4U\lip^2}{M}
\leq
4U(\lip^2 + \lip')
\end{align*}
and then we apply \Cref{lem:trivial}.
\end{proof}

\begin{lemma}\label{lem:2fa_fix}
    For fixed $\theta_0\in \Theta$, there is an absolute constant $c>0$ such that
    \begin{align*}
        \Pb\left(\abs{f'_a(\theta_0)} \geq t\right) \leq 2 \exp\left(-c\min\Big\{\frac{t^2 N}{\lip^2}, \frac{tN}{\lip}\Big\}\right).
    \end{align*}
\end{lemma}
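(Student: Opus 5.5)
The plan is to mirror the proof of \Cref{lem:fa_fix}, replacing $q_{ij}^1(\theta_0)$ by $\partial_l p_{ij}^1(\theta_0)$. Set $F_k := \frac{1}{M}\sum_{i\neq j}\partial_l p_{ij}^k(\theta_0)$, so that $f'_a(\theta_0) = \frac1N\sum_{k=1}^N F_k$ with the $F_k$ i.i.d. First I will check that $\E F_1 = 0$. For $i\neq j$, integrate out all pairs $(x_m,y_m)$ with $m\notin\{i,j\}$, then $y_i$ and $x_j$ using the marginal constraints on $p^*$. What remains is $\int \partial_l p^{\theta_0}\diff(\mu\otimes\nu)$. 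This equals $\partial_l$ of a constant, hence $0$, where the interchange of derivative and integral is justified by \Cref{thm:interchange} because $\partial_l p^\theta$ is bounded by $\lip$.

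Next come the moment bounds. For even $l$, expand $\E(F_1^l)$ as $M^{-l}$ times a sum over index tuples $(i_1,j_1,\dots,i_l,j_l)$ with $i_\alpha\neq j_\alpha$. As in \Cref{lem:fa}, if some index value occurs exactly once in the concatenated string, the corresponding expectation vanishes. Suppose for instance that $i_\beta$ is unique. Integrate first over $y_{i_\beta}$, which occurs only in $p^*_{i_\beta i_\beta}$ and gives $1$. Then integrate over $x_{i_\beta}$, which occurs only in $\partial_l p_{i_\beta j_\beta}$; this gives $\int\partial_l p^{\theta_0}(x,y)\diff\mu(x) = \partial_l 1 = 0$, again by \Cref{thm:interchange}. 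The surviving tuples all lie in the set $S$ of \Cref{lem:count}, and each term is bounded by $\sup|\partial_l p^\theta|^l\le \lip^l$, since equi-Lipschitz implies $|\partial_l p^\theta|\le\lip$. This gives $\E(F_1^l)\le M^{-l}(2Ml)^l\lip^l = (2l\lip)^l$. For odd $l$, Jensen as in \Cref{lem:fa} gives $(4l\lip)^l$. \Cref{lem:psi_norm} then yields $\norm{F_1}_{\psi_1}\le 8e\lip$.

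Finally, apply \Cref{thm:Bernstein} to the mean-zero i.i.d.\ variables $F_k$ and absorb the factor $8e$ into the absolute constant $c$. This gives the stated bound $2\exp(-c\min\{t^2N/\lip^2, tN/\lip\})$. The only nontrivial step is the vanishing of expectations for tuples containing a unique index, and specifically the claim that $\partial_l p^\theta$ integrates to zero against each marginal. That requires the interchange lemma with the uniform bound $\lip$ as the dominating function, which is available from \Cref{def:candidate}. The combinatorics carry over verbatim from \Cref{lem:fa}.
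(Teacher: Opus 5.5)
Your proposal is correct and follows the paper's route: the paper proves this lemma by repeating the proof of \Cref{lem:fa_fix} with $q_{ij}^k(\theta)$ replaced by $\partial_l p_{ij}^k(\theta)$. That proof relies on \Cref{thm:interchange} to get that $\partial_l p^\theta$ integrates to zero against each marginal, on the unique-index cancellation and \Cref{lem:count}, on Jensen for odd moments, on \Cref{lem:psi_norm}, and finally on Bernstein, all exactly as you describe. Your explicit check that $\E F_1 = 0$ and your ordering of the integrations (first $y_{i_\beta}$, then $x_{i_\beta}$) are precisely the details the paper leaves implicit.
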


\begin{lemma} \label{lem:2fb_fix}
    For fixed $\theta_0\in \Theta$ we have
    \begin{align*}
        \Pb\left(\abs{f'_b(\theta_0) - \E f'_b(\theta_0)} \geq t\right)\leq 2\exp\left(-\frac{t^2 N}{2\lip^2}\right).
    \end{align*}
\end{lemma}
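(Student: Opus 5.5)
The plan is to follow the proof of \Cref{lem:fb_fix} almost verbatim, replacing $q_{ii}^k(\theta_0)$ by $\partial_l p_{ii}^k(\theta_0)$. Write
\begin{align*}
    f'_b(\theta_0) = \frac{1}{N}\sum_{k=1}^N G_k, \qquad G_k := \frac{1}{M}\sum_{i=1}^M \partial_l p_{ii}^k(\theta_0).
\end{align*}
Since the batches $((X_i^k,Y_i^k)_{i=1}^M)_{k=1}^N$ are i.i.d.\ across $k$, the $G_k$ are i.i.d.\ real random variables. Thus $f'_b(\theta_0) - \E f'_b(\theta_0)$ is exactly the centred empirical mean to which \Cref{lem:Hoeffding} applies, with $\E G_1 = \E f'_b(\theta_0)$.

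The only input needed is a deterministic bound on $|G_1|$. By \Cref{def:candidate}, for every $(x,y)$ the map $\theta\mapsto p^\theta(x,y)$ is $\lip$-Lipschitz and $C^2$ on the open set $\Theta$. Hence its partial derivative satisfies
\begin{align*}
    \abs{\partial_l p^\theta(x,y)} = \lim_{h\to 0}\frac{\abs{p^{\theta+he_l}(x,y) - p^\theta(x,y)}}{\abs{h}} \leq \lip
\end{align*}
uniformly in $(x,y)$ and $\theta\in\Theta$. Since $\theta_0\in\Theta$ is interior, $\theta_0+he_l\in\Theta$ for small $h$, so this limit is legitimate. Averaging over $i$ gives $|G_1| \leq \frac{1}{M}\sum_{i=1}^M |\partial_l p_{ii}^1(\theta_0)| \leq \lip$.

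Applying \Cref{lem:Hoeffding} with $C = \lip$ and $F_k = G_k$ then yields
\begin{align*}
    \Pb\left(\abs{f'_b(\theta_0) - \E f'_b(\theta_0)} \geq t\right) \leq 2\exp\left(-\frac{t^2N}{2\lip^2}\right),
\end{align*}
which is the claim.

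There is no real obstacle here. The only point needing care is justifying $|\partial_l p^\theta| \leq \lip$ from the Lipschitz assumption, which holds because $\theta_0$ lies in the open set $\Theta$ where differentiability is assumed. Unlike in the $f'_a$ case, no centering or combinatorial argument is needed, because Hoeffding handles the mean subtraction directly. Measurability and integrability of $G_1$ follow from this boundedness.
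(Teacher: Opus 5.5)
Your proposal is correct and follows the paper's own proof: bound $\bigl|\frac{1}{M}\sum_{i=1}^M \partial_l p^1_{ii}(\theta_0)\bigr| \leq \lip$ using the equi-Lipschitz assumption, then apply \Cref{lem:Hoeffding} with $C=\lip$. One minor correction: measurability of $\partial_l p^{\theta_0}$ does not follow from boundedness; it follows because $\partial_l p^{\theta_0}$ is a pointwise limit of the measurable difference quotients you already wrote down.
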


\begin{lemma} \label{lem:2fc_fix}
    For fixed $\theta_0\in \Theta$ we have
    \begin{align*}
        \Pb\left(\abs{f'_c(\theta_0) - \E f'_c(\theta_0)} \geq t\right)\leq 2\exp\left(-\frac{t^2 N}{8U^2\lip^2}\right).
    \end{align*}
\end{lemma}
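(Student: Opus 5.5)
We follow the template of \Cref{lem:fc_fix}, with $q_{ij}^1(\theta)$ replaced by the summand defining $f'_c$. The single-batch quantity is
\begin{align*}
    G(\theta_0) := \frac{1}{M^2}\sum_{i,j=1}^M \frac{q_{ij}^1(\theta_0)\,\partial_l p_{ij}^1(\theta_0)}{\frac{1}{M}p_{ij}^1(\theta_0) + \frac{M-1}{M}},
\end{align*}
so that $f'_c(\theta_0) = \frac{1}{N}\sum_{k=1}^N G_k(\theta_0)$ is an average of $N$ i.i.d.\ copies $G_k$ of $G$; independence holds because the batches are independent. Once we have a deterministic bound $\abs{G_k(\theta_0)} \leq C$, \Cref{lem:Hoeffding} gives
\begin{align*}
    \Pb\bk{\abs{f'_c(\theta_0) - \E f'_c(\theta_0)} \geq t} \leq 2\exp\bk{-\frac{Nt^2}{2C^2}}.
\end{align*}
To obtain the stated exponent $\frac{t^2N}{8U^2\lip^2}$ we therefore need $C = 2U\lip$.

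The key step is a uniform bound on each summand, and we bound the three factors separately.
\begin{itemize}
    \item \emph{Numerator, first factor.} Since $p^\theta \geq 0$ and $p^\theta \leq U$, we have $\abs{q_{ij}(\theta_0)} = \abs{p_{ij}(\theta_0) - 1} \leq \max\{1, U-1\} \leq U$. Here we use $U \geq 1$, which holds because $p^\theta$ integrates to $1$ against the probability measure $\mu$.
    \item \emph{Numerator, second factor.} The family $\theta \mapsto p^\theta(x,y)$ is equi-Lipschitz with constant $\lip$ (\Cref{def:candidate}), so the partial derivative satisfies $\abs{\partial_l p_{ij}(\theta_0)} \leq \lip$ on the open set $\Theta$.
    \item \emph{Denominator.} Using $p \geq 0$ and $M \geq 2$, we get $\frac{1}{M}p_{ij} + \frac{M-1}{M} \geq \frac{M-1}{M} \geq \frac12$.
\end{itemize}
Combining these, each summand is bounded in absolute value by $2U\lip$. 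Averaging the $M^2$ summands with the prefactor $\frac{1}{M^2}$ preserves this bound, so $\abs{G(\theta_0)} \leq 2U\lip$ uniformly in $M$.

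With $C = 2U\lip$, the Hoeffding exponent is $\frac{Nt^2}{2\cdot 4U^2\lip^2} = \frac{t^2N}{8U^2\lip^2}$, exactly as claimed. There is no real obstacle. The only points needing care are the positivity of $p^\theta$ (it is a density) and the fact that $\theta_0$ lies in the open set $\Theta$, so that $\partial_l p$ exists and is controlled by the Lipschitz constant.
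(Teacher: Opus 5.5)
Your proposal is correct and follows essentially the same route as the paper. The paper bounds $\abs{(p^\theta-1)\partial_l p^\theta}$ by $U\lip$ and the denominator from below by $1/2$ using $M\geq 2$, then applies \Cref{lem:Hoeffding} with $C=2U\lip$; your justifications of $\abs{p^\theta-1}\leq U$ (via $U\geq 1$) and of $\abs{\partial_l p^\theta}\leq \lip$ (via equi-Lipschitzness) just make explicit what the paper uses implicitly.
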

\begin{proof}
    The upper bound for $|(p^\theta-1)\partial_lp^\theta|$ is $U\lip$, the lower bound for $|\frac{1}{M}p^\theta + \frac{M-1}{M}|$ is $1/2$ since $M\geq 2$. Apply \Cref{lem:Hoeffding}.
\end{proof}

\begin{theorem}\label{thm:2main_detailed}
    Let $J,D$ be same as in \Cref{lem:Dudley_iid}. There exists an absolute constant $c>0$ such that
    \begin{align*}
        \Pb\left(\sup_{\theta\in\Theta}\abs{\partial_l\pl(\theta) - \partial_l\epl(\theta)} \geq 6t\right) 
        &\leq 
        6\exp\left(-c\min\Big\{ \frac{t^2 N}{U^2\lip^2}, \frac{tN}{\lip}\Big\}\right) + 6\exp\left(\frac{J}{D}\right)\exp\left(-\frac{ct\sqrt{N}}{D(\lip^2+\lip') U}\right).
    \end{align*}
\end{theorem}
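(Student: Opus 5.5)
The proof follows the template of the proof of \Cref{thm:main_detailed}, using the gradient lemmata of this subsection in place of their counterparts. We start from the decomposition of $\partial_l\pl$ into $-f'_a - f'_b + f'_c$ and the triangle inequality stated above. Since $\E\partial_l\pl = \partial_l\epl$ (by \Cref{thm:interchange}, using the uniform bounds from \Cref{def:candidate}), it suffices to show, for each $\star\in\{a,b,c\}$, that $\sup_\theta\abs{f'_\star(\theta)-\E f'_\star(\theta)}\geq 2t$ holds with probability at most the corresponding share of the right-hand side. A union bound over the three events then gives the factor $6$ in front of both exponentials, with threshold $6t$.

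For $f'_a$, we first note that $\E f'_a(\theta)=0$. The argument is the same as for $F_k$ in \Cref{lem:fa}, now using $\int\partial_l p^\theta\diff\mu = \int\partial_l p^\theta \diff\nu = 0$. By \Cref{lem:2fa}, $f'_a$ is a $\psi_1$-process with respect to $\frac{c\lip'}{\sqrt N}\norm{\cdot}$, so \Cref{lem:Dudley_iid} bounds
\[
\Pb\bk{\sup_{\theta,\theta'}\abs{f'_a(\theta)-f'_a(\theta')}\geq t} \le 2e^{J/D}\exp\bk{-\frac{ct\sqrt N}{D\lip'}}.
\]
\Cref{lem:2fa_fix} controls $\abs{f'_a(\theta_0)}$ at a fixed $\theta_0$, and \Cref{lem:aux} combines the two estimates. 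The same scheme handles $f'_b-\E f'_b$ via \Cref{lem:2fb} and \Cref{lem:2fb_fix}, giving the tail terms $\exp(-t^2N/(2\lip^2))$ and $\exp(-ct\sqrt N/(D\lip'))$. It also handles $f'_c-\E f'_c$ via \Cref{lem:2fc} and \Cref{lem:2fc_fix}, giving $\exp(-t^2N/(8U^2\lip^2))$ and $\exp(-ct\sqrt N/(DU(\lip^2+\lip')))$.

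The last step is to unify the six bounds into the two displayed terms by taking the worst constants. Here $U\geq 1$ holds, since $p^\theta$ integrates to $1$ against a probability measure, so $\sup p^\theta\geq1$. Consequently each $\lip'$ and each $U\lip'$ is at most $U(\lip^2+\lip')$, and each $\lip^2$ is at most $U^2\lip^2$. After shrinking the absolute constant $c$, all the sub-Gaussian/Hoeffding terms are therefore dominated by $\exp(-c\min\{t^2N/(U^2\lip^2),\,tN/\lip\})$, and all the Dudley terms by $e^{J/D}\exp(-ct\sqrt N/(D(\lip^2+\lip')U))$.

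The main obstacle is verifying that the combinatorial moment argument of \Cref{lem:fa} and \Cref{lem:fa_fix} transfers to $\partial_l p$. Two points need checking. First, the vanishing of terms with a uniquely appearing index requires $\partial_l p^\theta$ to integrate to zero in each marginal separately; this is exactly the interchange justified above. Second, the moments must be bounded with $\sup\abs{\partial_l p^\theta}\le\lip$ for the fixed-point bound and with $\lip'$ as the Lipschitz constant of $\theta\mapsto\partial_l p^\theta$ for the increments. Once these are in place, \Cref{lem:count} gives $\E\abs{F_1}^l\le(2l\lip)^l$, and the rest is bookkeeping of constants.
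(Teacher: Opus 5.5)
Your proposal is correct and matches the paper's intended argument: the paper gives no separate proof, but it is the proof of \Cref{thm:main_detailed} with Lemmata \ref{lem:2fa}--\ref{lem:2fc_fix} substituted, combined via \Cref{lem:Dudley_iid}, \Cref{lem:aux} and a union bound over the three components. Your explicit justification of $U\geq 1$ is also right; it is needed to merge the six tail bounds into the two displayed terms, and the paper uses it without stating it.
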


\begin{theorem}\label{thm:grad_bound}
    There exist constants $C_1,C_2>0$ independent of $M,N$ such that
    \begin{align*}
        \Pb\left(\sup_{\theta\in\Theta}\norm{\nabla\pl(\theta) - \nabla\epl(\theta)}_2 \geq t\right) \leq C_1\exp\Big(-C_2\min\{t^2N, t\sqrt{N}\}\Big)
    \end{align*}
\end{theorem}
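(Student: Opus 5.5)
The plan is to deduce \Cref{thm:grad_bound} from the coordinatewise bound in \Cref{thm:2main_detailed} with a union bound over the finitely many coordinates, and then to absorb every problem-dependent quantity into $C_1,C_2$. Let $n$ denote the dimension of the Euclidean space containing $\Theta$. For any vector $v\in\R^n$ we have $\norm{v}_2\le \sqrt{n}\max_l\abs{v_l}$. Hence, if
\begin{align*}
\sup_{\theta\in\Theta}\norm{\nabla\pl(\theta)-\nabla\epl(\theta)}_2\ge t,
\end{align*}
then some coordinate $l$ satisfies $\sup_{\theta}\abs{\partial_l\pl(\theta)-\partial_l\epl(\theta)}\ge t/\sqrt{n}$. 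Taking a union bound over $l\in\{1,\dots,n\}$ and applying \Cref{thm:2main_detailed} with $t$ replaced by $t/(6\sqrt n)$ gives
\begin{align*}
\Pb\Big(\sup_{\theta\in\Theta}\norm{\nabla\pl(\theta)-\nabla\epl(\theta)}_2\ge t\Big)\le 6n\exp\Big(-c\min\Big\{\tfrac{t^2N}{36nU^2\lip^2},\tfrac{tN}{6\sqrt n\lip}\Big\}\Big)+6n\exp\Big(\tfrac{J}{D}\Big)\exp\Big(-\tfrac{ct\sqrt N}{6\sqrt n D(\lip^2+\lip')U}\Big).
\end{align*}

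The second step is to simplify the exponents. Since $N\ge1$, we have $tN\ge t\sqrt N$, so
\begin{align*}
\min\{a t^2N,\,b tN\}\ge \min\{a,b\}\min\{t^2N,\,t\sqrt N\}
\end{align*}
for constants $a,b>0$. The third exponent is already of the form $b'\,t\sqrt N\ge b'\min\{t^2N,t\sqrt N\}$. Set
\begin{align*}
C_2:=\min\Big\{\tfrac{c}{36nU^2\lip^2},\ \tfrac{c}{6\sqrt n\lip},\ \tfrac{c}{6\sqrt nD(\lip^2+\lip')U}\Big\},\qquad C_1:=6n+6n e^{J/D}.
\end{align*}
With these choices the right-hand side above is at most $C_1\exp(-C_2\min\{t^2N,t\sqrt N\})$. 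All these constants depend only on $n,U,\lip,\lip',D,J$, so they are independent of $M$ and $N$.

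There is no real obstacle at this level, because the substance lies in \Cref{thm:2main_detailed}, which the paper leaves to the analogous lemmata. The one point I would double-check is that the constants in Lemmas~\ref{lem:2fa}--\ref{lem:2fc_fix} are genuinely $M$-independent. This matters most for $f'_a$, where the combinatorial bound of \Cref{lem:count} must be used with $\partial_l p$ in place of $q$. That substitution needs $\partial_l p^\theta$ to integrate to zero in each marginal, which the paper justifies via \Cref{thm:interchange}. It also needs $\partial_l p^\theta$ bounded by $\lip$ and Lipschitz with constant $\lip'$, both of which follow from \Cref{def:candidate}.
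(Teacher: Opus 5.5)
Your proposal is correct and follows the paper's proof: pass from $\norm{\cdot}_2$ to $\norm{\cdot}_\infty$ at the cost of a factor $\sqrt{\dim(\Theta)}$, take a union bound over the coordinates, apply \Cref{thm:2main_detailed}, and absorb all problem-dependent quantities into $C_1,C_2$. Your bookkeeping is cleaner than the paper's. You sum over the $n=\dim(\Theta)$ coordinates, whereas the paper's $\sum_{m=1}^M$ is a typo that, read literally, would make $C_1$ depend on $M$. You also track explicitly the factor $6$ and the step from $tN$ to $t\sqrt{N}$.
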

\begin{proof}
    We have
    \begin{align*}
        \Pb\left(\sup_{\theta\in\Theta}\norm{\nabla\pl(\theta) - \nabla\epl(\theta)}_2 \geq t\right)
        &\leq
        \Pb\left(\sup_{\theta\in\Theta}\norm{\nabla\pl(\theta) - \nabla\epl(\theta)}_\infty \geq \frac{t}{\sqrt{\dim(\Theta)}}\right)
        \\&\leq
        \sum_{m=1}^M\Pb\left(\sup_{\theta\in\Theta}\abs{\partial_l\pl(\theta) - \partial_l\epl(\theta)} \geq \frac{t}{\sqrt{\dim(\Theta)}}\right).
    \end{align*}
    Apply \Cref{thm:2main_detailed} and clean up.
\end{proof}

\subsection{Convergence of the estimator}\label{subsec:estimator_convergence}
In this subsection we provide an explicit convergence rate for our estimator $\mms$.
\begin{lemma}\label{lem:curvatrure_convergence}
    $\nabla^2 \epl(\theta) \to \frac{1}{2}\nabla^2 \norm{p^\theta - p^*}^2_{L^2(\mu\otimes\nu)} = \nabla^2 \lpl(\theta)$ uniformly as $M\to\infty$.
\end{lemma}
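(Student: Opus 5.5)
Start from the integral representation \eqref{eq:Eql_integral}, which gives
\begin{align*}
    \epl(\theta) = -M^2 \int \log\Big(1+\tfrac{1}{M}q^\theta\Big)\Big(\tfrac{1}{M}p^* + \tfrac{M-1}{M}\Big)\diff\mu\otimes\nu ,
\end{align*}
where $q^\theta = p^\theta - 1$. The plan is to differentiate twice under the integral using \Cref{thm:interchange}, applied twice, and then expand the resulting integrand in powers of $1/M$ uniformly in $\theta$ and $(x,y)$. Set $h_M(s) := -M^2\log(1+s/M)$. Then
\begin{align*}
    h_M'(s) = -\frac{M}{1+s/M}, \qquad h_M''(s) = \frac{1}{(1+s/M)^2}.
\end{align*}
The chain rule gives
\begin{align*}
    \nabla^2_\theta\, h_M(q^\theta) = h_M''(q^\theta)\,\nabla p^\theta(\nabla p^\theta)^\top + h_M'(q^\theta)\,\nabla^2 p^\theta .
\end{align*}
Since $q^\theta\in[-1,U-1]$ and $M\ge 2$, we have $1+q^\theta/M\ge 1/2$. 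Together with the bounds $\mathcal{L},\mathcal{L}'$ from \Cref{def:candidate}, both derivatives of the integrand are bounded by constants depending on $M$ only. The weight $\frac1M p^*+\frac{M-1}{M}$ lies in $L^1(\mu\otimes\nu)$, so \Cref{thm:interchange} applies and we may pass both derivatives inside the integral.

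The main obstacle is the second term: $h_M'$ is of size $M$, so it must be cancelled rather than bounded. I write $h_M'(s) = -M + s - \frac{s^2}{M(1+s/M)}$. Integrating the $-M$ part gives $-M\,\nabla^2\!\int p^\theta(\tfrac1M p^*+\tfrac{M-1}{M})$. Split this as
\begin{align*}
    -(M-1)\,\nabla^2\!\int p^\theta\,\diff\mu\otimes\nu \;-\; \nabla^2\!\int p^\theta p^*\,\diff\mu\otimes\nu .
\end{align*}
The first piece vanishes because $\int p^\theta \diff\mu\otimes\nu = 1$ for all $\theta$, and differentiation can be interchanged with the integral by the same interchange argument used for the gradient lemmas. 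The remaining parts of the expansion behave as follows:
\begin{itemize}
    \item The $s$ part contributes $\int q^\theta\nabla^2p^\theta(\tfrac1M p^*+\tfrac{M-1}M)$.
    \item The last part is $O(U^2\mathcal{L}'/M)$ uniformly in $\theta$, since $(1+s/M)^{-1}\le 2$.
    \item For the first-derivative term, $h_M''(q^\theta) = 1+O(U/M)$ uniformly, so that term equals $\int\nabla p^\theta(\nabla p^\theta)^\top(\tfrac1M p^*+\tfrac{M-1}{M}) + O(\mathcal{L}^2U/M)$.
    \item Replacing the weight $\tfrac1M p^*+\tfrac{M-1}M$ by $1$ costs $O((U+1)/M)$ times bounded integrands. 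Here I need $p^*$ integrable, which holds since $\int p^*\diff\mu\otimes\nu=1$.
\end{itemize}

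Collecting the terms, uniformly in $\theta\in\Theta$,
\begin{align*}
    \nabla^2\epl(\theta) = \int \Big(\nabla p^\theta(\nabla p^\theta)^\top + (p^\theta-1)\nabla^2p^\theta\Big)\diff\mu\otimes\nu - \int p^*\nabla^2 p^\theta\diff\mu\otimes\nu + O(M^{-1}).
\end{align*}
Since $\int\nabla^2 p^\theta\,\diff\mu\otimes\nu = 0$, the $-1$ term drops. What remains is $\int \big(\nabla p^\theta(\nabla p^\theta)^\top + (p^\theta-p^*)\nabla^2 p^\theta\big)$, which is exactly $\frac12\nabla^2\int (p^\theta-p^*)^2\,\diff\mu\otimes\nu$; differentiating twice under the integral is justified by the same boundedness. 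Finally, $\lpl$ differs from this only by a $\theta$-independent constant, by \Cref{thm:mean_pseudo}, so the two Hessians agree. All error constants depend only on $U,\mathcal{L},\mathcal{L}'$, which gives uniform convergence of the Hessian at rate $O(M^{-1})$.
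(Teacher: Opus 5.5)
Your proof is correct and follows essentially the same route as the paper. Both differentiate twice under the integral in \eqref{eq:Eql_integral}, remove the divergent $O(M)$ contribution using $\int \nabla^2 p^\theta \diff\mu\otimes\nu = 0$, and bound the remainder by $O(M^{-1})$ uniformly in $\theta$ via $\frac1M p^\theta + \frac{M-1}{M} \geq \frac12$ and the constants $U,\lip,\lip'$. The only difference is cosmetic: you expand $h_M'$ in powers of $s/M$ and then replace the weight by $1$, whereas the paper first splits the weight as $\frac1M(p^*-p^\theta) + \big(\frac1M p^\theta + \frac{M-1}{M}\big)$ to cancel the denominator, and then expands.
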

\begin{proof}
By \Cref{eq:Eql_integral} we have $\epl(\theta) = -M^2\int \log(\frac{1}{M}p^\theta + \frac{M-1}{M}) (\frac{1}{M}p^* + \frac{M-1}{M}) \diff\mu\otimes\nu$.
Note that by \Cref{def:candidate} the value as well as any first and second derivatives of $p^{\theta}$ are uniformly bounded. A simple calculation shows that the same holds for $\log(\frac{1}{M} p^{\theta} + \frac{M-1}{M})$ for $M \geq 2$. With this, \Cref{thm:interchange} allows us to exchange differentiation and integration, therefore
    \begin{align*}
        -\nabla^2 \epl(\theta) 
        &= 
        M^2\int \left(\frac{\frac{1}{M}\nabla^2 p^\theta}{\frac{1}{M}p^\theta + \frac{M-1}{M}} - \frac{\frac{1}{M^2}(\nabla p^\theta) \otimes (\nabla p^\theta)}{\left(\frac{1}{M}p^\theta + \frac{M-1}{M}\right)^2}\right) \left(\frac{1}{M}p^* + \frac{M-1}{M}\right) \diff\mu\otimes\nu
        \\&=
        M^2\int 
        \left(
        \frac{\frac{1}{M}\nabla^2 p^\theta}{\frac{1}{M}p^\theta + \frac{M-1}{M}} 
        \right) 
        \left(
        \frac{1}{M}(p^* - p^{\theta}) + \left(\frac{1}{M}p^{\theta} + \frac{M-1}{M}\right)
        \right) \diff\mu\otimes\nu
        \\&
        \qquad
        -M^2\int 
        \left(
        \frac{\frac{1}{M^2}(\nabla p^\theta)^{\otimes 2}}
        {\left(\frac{1}{M}p^\theta + \frac{M-1}{M}\right)^2}
        \right) 
        \left(
        \frac{1}{M}(p^* - p^{\theta}) + \left(\frac{1}{M}p^{\theta} + \frac{M-1}{M}\right)
        \right) \diff\mu\otimes\nu
        \\&=
        \int\frac{(p^* - p^\theta)\nabla^2 p^\theta}{\frac{1}{M}p^\theta + \frac{M-1}{M}} + M\nabla^2 p^\theta
        -\frac{1}{M}\frac{(p^*-p^\theta)(\nabla p^\theta)^{\otimes 2}}{\left(\frac{1}{M}p^\theta + \frac{M-1}{M}\right)^2}
        - \frac{(\nabla p^\theta)^{\otimes 2}}{\frac{1}{M}p^\theta + \frac{M-1}{M}}
        \diff\mu\otimes\nu.
        \intertext{
        Using the identities $\frac{1}{\frac{1}{M}p^\theta + \frac{M-1}{M}} = 1 + \frac{1}{M}\frac{1-p^\theta}{\frac{1}{M}p^\theta + \frac{M-1}{M}}$ and $\int\nabla^2p^\theta \diff \mu\otimes\nu = 0$ we get
        }
        -\nabla^2 \epl(\theta) 
        &=
        \int \left(p^* - p^\theta\right)\nabla^2 p^\theta
        \left(1 + \frac{1}{M}\left(\frac{1 - p^{\theta}}{\frac{1}{M} p^{\theta} + \frac{M-1}{M}}\right)\right)
        \diff\mu\otimes\nu
        \\&\qquad-
        \int 
        (\nabla p^\theta)^{\otimes 2}
        \left(
            1 + \frac{1}{M}\left(
                \frac{1-p^\theta}{\frac{1}{M}p^\theta + \frac{M-1}{M}} + \frac{p^* - p^\theta}{\left(\frac{1}{M}p^\theta + \frac{M-1}{M}\right)^2}
            \right)
        \right)
        \diff \mu\otimes\nu.
    \end{align*}
    Note that $\frac{1}{2}\nabla^2 \norm{p^\theta - p^*}^2_{L^2(\mu\otimes\nu)} = \int (\nabla p^\theta)^{\otimes 2} - (p^*-p^\theta)\nabla^2 p^\theta \diff\mu\otimes\nu$, therefore 
    \begin{align*}
        \nabla^2 \epl(\theta) = \frac{1}{2}\nabla^2 \norm{p^\theta - p^*}^2_{L^2(\mu\otimes\nu)} + \frac{1}{M} A(\theta)
    \end{align*}
    for $A(\theta)$ being some uniformly (with respect to $\theta$) bounded operator. 
\end{proof}

\noindent Then we are able to control the curvature of $\epl(\theta)$ for large $M$ under the following assumption.
\begin{assumption}\label{ass:L2}
    Assume that
    $\lmms \in \Theta$ is the unique minimiser of $\lpl$ over $\Bar\Theta$, and there exist some $\tau, r > 0$ such that the smallest eigenvalue of $\nabla^2 \lpl(\theta)$ is larger than $\tau$ for all $\theta\in\Theta$ with $\norm{\theta - \lmms} \leq r$.
\end{assumption}

\begin{theorem*}[Recollection of \protect{\Cref{thm:main-estimator}}]
    There exists an $M_0 \in \mathbb{N}$ such that $\forall M\geq M_0$ the following hold.
    \begin{enumerate}
        \item The minimiser $\emms$ of $\epl$ is unique.
        \item There exist some $C_1,C_2,t_0 > 0$ s.t.~for all $t \leq t_0$ and for any $\mms\in\argmin_{\Bar\Theta} \pl$ we have
    \begin{align*}
        \Pb\left(\norm{\mms- \emms}_2 \geq t\right) \leq C_1\exp\left(-C_2\min\left\{t^2 N, t\sqrt{N}\right\}\right).
    \end{align*}
    \end{enumerate}
\end{theorem*}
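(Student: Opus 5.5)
The argument splits into a deterministic part, which locates and controls $\emms$ for large $M$, and a probabilistic part, which transfers the uniform gradient concentration of \Cref{thm:grad_bound} to the minimisers.

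\textbf{Step 1: uniqueness and location of $\emms$.} First I show $\emms\to\lmms$ as $M\to\infty$. By \Cref{thm:mean_pseudo}, $\epl\to\lpl$ uniformly on $\Bar\Theta$ with error $O(M^{-1})$; the constant in the Taylor remainder depends only on $U$. Since $\lmms$ is the unique minimiser of the continuous function $\lpl$ on the compact set $\Bar\Theta$, for every $\rho>0$ we have
\[
\inf_{\norm{\theta-\lmms}\geq\rho}\lpl(\theta)-\lpl(\lmms)>0 .
\]
Hence for large $M$ every minimiser of $\epl$ lies within $\rho$ of $\lmms$. Choose $\rho<r/2$ with $B(\lmms,r)\subset\Theta$; this is possible because $\lmms\in\Theta$ is interior, and we may shrink $r$ if necessary.

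By \Cref{lem:curvatrure_convergence}, $\nabla^2\epl=\nabla^2\lpl+M^{-1}A$ with $A$ uniformly bounded. So for $M\geq M_0$ the smallest eigenvalue of $\nabla^2\epl$ is at least $\tau/2$ on $B(\lmms,r)$, and $\epl$ is strongly convex there. A strongly convex function has at most one minimiser on a ball. Combined with the localisation, this gives uniqueness of $\emms$, and moreover $\emms\in B(\lmms,r/2)$, so $\nabla\epl(\emms)=0$.

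\textbf{Step 2: localising $\mms$.} Set $\delta:=\inf_{\theta\in\Bar\Theta,\ \norm{\theta-\lmms}\geq r/2}\lpl(\theta)-\lpl(\lmms)>0$. For $M$ large, the gap for $\epl$ outside $B(\lmms,r/2)$ is at least $\delta/2$. On the event
\[
E_1:=\Big\{\sup_\theta\abs{\pl-\epl}<\delta/4\Big\},
\]
every minimiser $\mms$ of $\pl$ over $\Bar\Theta$ lies in $B(\lmms,r/2)\subset\Theta$. The sup over $\Theta$ extends to $\Bar\Theta$ by continuity. By \Cref{thm:main_detailed} with the fixed $t=\delta/24$, $\Pb(E_1^c)\leq C\exp(-c\sqrt N)$, and this is independent of $M$. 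On $E_1$, $\mms$ is interior, so $\nabla\pl(\mms)=0$.

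\textbf{Step 3: from gradients to parameters.} On $E_1$, both $\mms$ and $\emms$ lie in the convex ball $B(\lmms,r/2)$, where $\nabla^2\epl\succeq\tau/2$. Strong monotonicity of the gradient gives
\[
\tfrac{\tau}{2}\norm{\mms-\emms}^2\leq\langle\nabla\epl(\mms)-\nabla\epl(\emms),\mms-\emms\rangle .
\]
Using $\nabla\epl(\emms)=0$ and $\nabla\pl(\mms)=0$, the inner product equals $\langle\nabla\epl(\mms)-\nabla\pl(\mms),\mms-\emms\rangle$. Therefore
\[
\norm{\mms-\emms}\leq\tfrac{2}{\tau}\sup_\theta\norm{\nabla\pl-\nabla\epl}.
\]
It follows that
\[
\Pb(\norm{\mms-\emms}\geq t)\leq\Pb(E_1^c)+\Pb\Big(\sup_\theta\norm{\nabla\pl-\nabla\epl}\geq \tau t/2\Big).
\]
The second term is bounded by \Cref{thm:grad_bound}. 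To absorb the first term, pick $t_0$ small enough that $\min\{t^2N,t\sqrt N\}\lesssim\sqrt N$ for $t\leq t_0$. Then $C\exp(-c\sqrt N)\leq C\exp(-c'\min\{t^2N,t\sqrt N\})$, and the two bounds merge into the stated form with $M$-independent $C_1,C_2$.

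\textbf{Main obstacle.} The delicate part is Step 1 together with the localisation in Step 2. Every constant must be uniform in $M\geq M_0$, and both $\emms$ and $\mms$ must land in the same strongly convex interior region. This relies on the identifiability gap $\delta$ and on the $O(M^{-1})$ uniformity in \Cref{thm:mean_pseudo} and \Cref{lem:curvatrure_convergence}. The rest is routine.
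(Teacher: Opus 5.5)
Your proof is correct and follows essentially the same route as the paper. You localise all minimisers of $\epl$ near $\lmms$ via uniform convergence and the identifiability gap, and you obtain strong convexity on a ball from \Cref{lem:curvatrure_convergence}. You then localise $\mms$ on the event that $\sup_\theta|\pl-\epl|$ is small, using \Cref{thm:main_detailed}, and turn \Cref{thm:grad_bound} into a parameter bound via $\nabla\pl(\mms)=\nabla\epl(\emms)=0$; your strong-monotonicity inequality is the paper's path-integral estimate in different form. You also make explicit two details the paper leaves to ``straightforward cleaning'': shrinking $r$ so that $B(\lmms,r)\subset\Theta$, which makes $\mms$ interior, and absorbing the $e^{-c\sqrt N}$ localisation-failure probability into the final bound for $t\le t_0$.
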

\begin{proof}
    Suppose that the maximal pairwise distance between $\argmin_{\Bar\Theta} \epl$ and $\{\lmms\}$ did not converge to $0$ as $M\to\infty$, i.e.~there exists a sequence $(\tilde\theta_M)_M$ such that $\tilde\theta_M \in\argmin_{\Bar\Theta}\epl$ and $\limsup_M||\lmms - \tilde\theta_M|| > 0$. 
    By compactness of $\Bar\Theta$ there exists a convergent subsequence of minimisers of $\epl$, which does not converge to $\lmms$. 
    By \Cref{ass:L2} the latter is the unique minimizer of $\lpl$. 
    Therefore, this is a contradiction, since by \Cref{thm:mean_pseudo} we have that $\epl$ uniformly converges to $\lpl$ as $M\to\infty$.

    Furthermore, by \Cref{ass:L2} and \Cref{lem:curvatrure_convergence} there exists a closed ball $B(\lmms,r)$ around $\lmms$ with radius $r$ and some $M_1\in\mathbb{N}$ such that for all $M > M_1$, $\epl$ is strictly convex in $B(\lmms,r)$. 
    By the first part of the proof, the set $\argmin \epl$ will also be contained in $B(\lmms,r)$ for all $M$ larger than some $M_2>0$. Therefore for $M > \max(M_1,M_2)$, $\epl$ has a unique minimiser, denoted by $\emms$.

    By uniqueness of the minimizer $\lmms$ of the continuous function $\lpl$ and compactness of $\Bar\Theta$, for every $\epsilon > 0$ we have $\inf \{ \lpl(\theta) \mid \theta \in \Theta,  \norm{\theta - \lmms} \geq \epsilon\} > \lpl(\lmms)$, in particular $\forall \epsilon > 0 \,\exists \delta > 0 : \norm{\theta - \lmms} \geq \epsilon/2 \Rightarrow \lpl(\theta) \geq \lpl(\lmms)+2\delta$.
    Recall that we have $\emms\to\lmms$ and uniform convergence of $\epl\to\lpl$ as $M\to\infty$ (\Cref{thm:mean_pseudo}), therefore there exists an $M_3 > 0$ (depending on $\epsilon$ and $\delta$) such that for all $M > M_3$ and $\theta \in \Bar\Theta$ we get
    \begin{align}\label{eq:curvature}
        \norm{\theta - \emms} \geq \epsilon \Rightarrow
        \norm{\theta - \lmms} \geq \epsilon/2 \Rightarrow
        \lpl(\theta) \geq \lpl(\lmms) + 2\delta \Rightarrow
        \epl(\theta) \geq \epl(\emms) + \delta.
    \end{align}
    Let $\epsilon < r/2$.
    For $\delta' := \sup_\theta |\pl(\theta) - \epl(\theta)|$ we have for any $\mms\in\argmin_{\Bar\Theta}\pl$
    \begin{align*}
        \epl(\mms) - \delta' \leq \pl(\mms) \leq \pl(\emms) \leq \epl(\emms) + \delta'.
    \end{align*}
    If $\delta' < \delta/2$ this leads to $\epl(\mms) \leq \epl(\emms) + 2\delta' < \epl(\emms) + \delta$ and by contrapositive of \eqref{eq:curvature} we have $\norm{\mms - \emms} < \epsilon < r/2$. 
    By \Cref{lem:curvatrure_convergence} and \Cref{ass:L2}, there exists some $M_4 > 0$ such that for all $M\geq M_4$ we have that the smallest eigenvalue of $\nabla^2\epl(\theta)$ is larger than $\tau/2$ for $\theta \in B(\emms, r/2)$.
    Consider the path $\gamma(t) = t\mms + (1-t)\emms$ with $t \in [0,1]$.
    We get
    \begin{align*}
        \norm{\nabla \epl(\mms) - \nabla \pl(\mms)}_2
        &=
        \norm{\nabla \epl(\mms) - \nabla \epl(\emms)}_2 
        \\&=
        \norm{\nabla \epl(\gamma(1)) - \nabla \epl(\gamma(0))}_2 
        \\&=
        \norm{\int_0^1 \nabla^2\epl(\gamma(t)). \dot{\gamma}(t) \diff t}_2
        \\&=
        \norm{\int_0^1 \nabla^2\epl(\gamma(t)). (\mms - \emms) \diff t}_2
        \\&\geq
        \Big\langle \frac{\mms - \emms}{\norm{\mms - \emms}_2},\int_0^1 \nabla^2\epl(\gamma(t)). (\mms - \emms) \diff t\Big\rangle
        \\&=
        \int_0^1 \Big\langle \frac{\mms - \emms}{\norm{\mms - \emms}_2},\nabla^2\epl(\gamma(t)). (\mms - \emms) \Big\rangle\diff t
        \\&\geq
        \frac{\tau}{2}\norm{\mms - \emms}_2.
    \end{align*}

    \noindent
    Combining everything above, we can conclude
    \begin{align*}
        \sup_\theta |\pl(\theta) - \epl(\theta)| < \frac\delta 2
        \Rightarrow
        ||\mms - \emms||_2 \leq \frac{2}{\tau}\sup_\theta \norm{\nabla \epl(\theta) - \nabla \pl(\theta)}_2.
    \end{align*}
    Apply \Cref{thm:main_detailed,thm:grad_bound} and after some straight forward cleaning we prove the claim.
\end{proof}

\section{Numerical examples}\label{sec:num}
In this section we give two synthetic numerical examples that illustrate the key properties of our estimator in accordance with the theoretical results.\footnote{Code available at \url{https://github.com/OTGroupGoe/BatchedBrokenSamples}}

\subsection{Points colocalisation on 2D Torus}
\begin{figure}
    \centering
    \includegraphics[width = 0.95\textwidth]{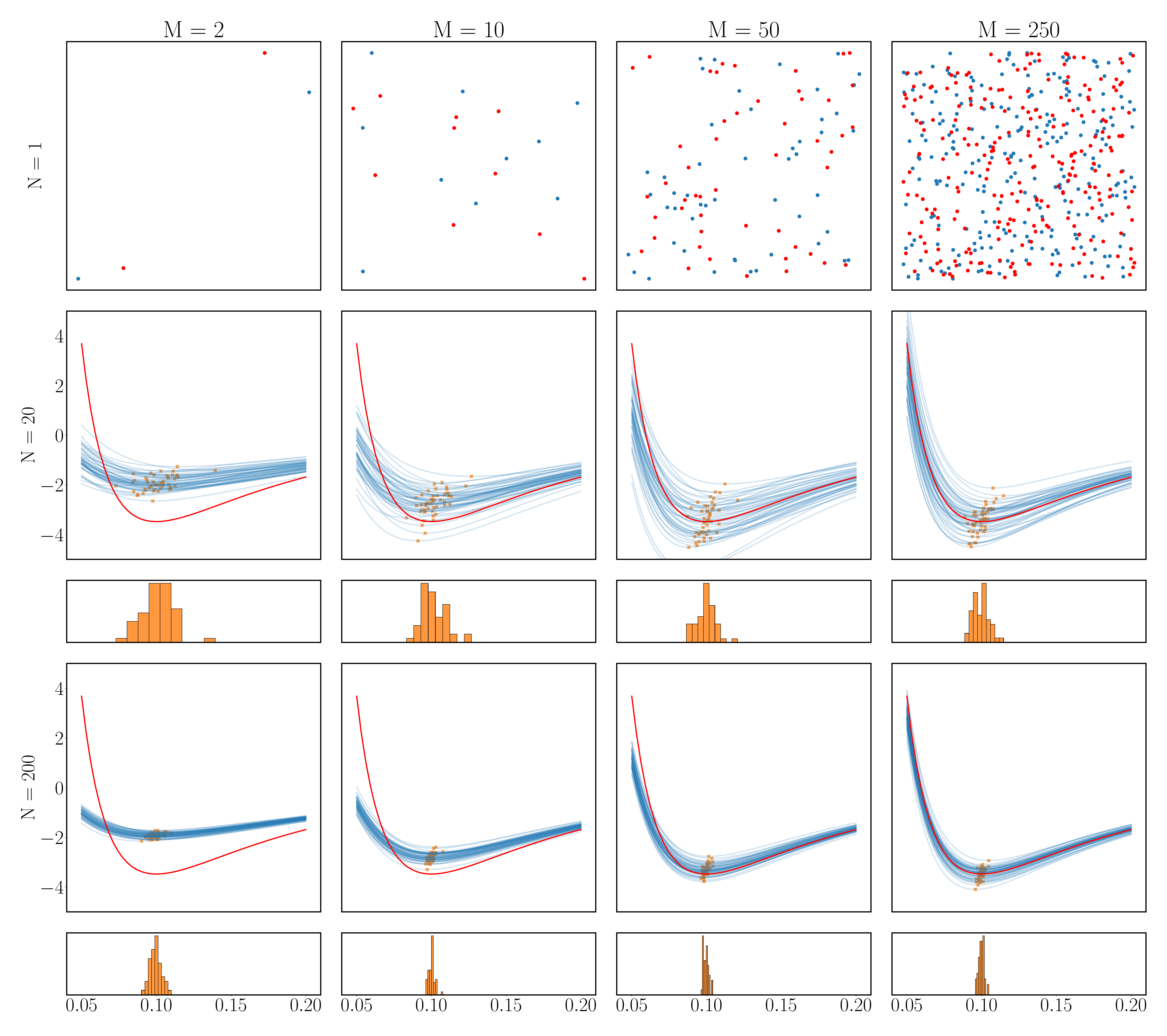}
    \caption{
    Numerical experiment for $\sigma^* = 0.1$, the first row shows an example point cloud from a single batch. 
    Each blue line depicts $\pl(\sigma)$ calculated from samples with $M$ and $N$ as denoted on the corresponding column / row. 
    There are $50$ independent samples per plot.
    Orange points denote the minima and the histograms below show their distribution.
    The red line is $\lpl(\sigma) = \frac{1}{2}||p^\sigma - p^{\sigma^*}||^2_{L^2} + \frac{1}{2} - \frac{1}{2}||p^{\sigma^*}||_{L^2}^2$.
    }
    \label{fig:the_plot}
\end{figure}

\begin{figure}
    \centering
    \includegraphics[width = \textwidth]{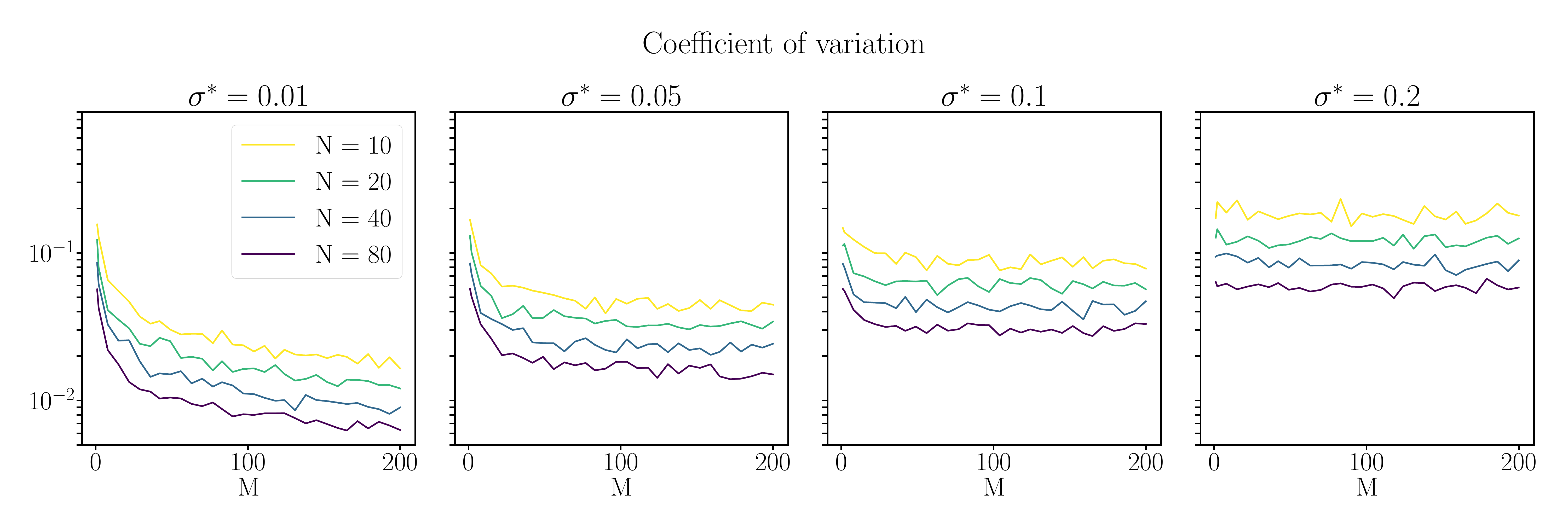}
    \caption{The coefficient of variation of $\sigma_M^N$, computed from $100$ simulations, for varying values of $\sigma^*, N$, and $M$.}
    \label{fig:cv_plot}
\end{figure}

Let $\mathbb{X} = \mathbb{Y} = \mathbb{R}^2/\mathbb{Z}^2$ be the $2$-torus and $\pi$ be defined by
\begin{align*}
(X,Y)\sim \pi \quad \Leftrightarrow \quad
\begin{cases}
    X &\sim \mathcal{U}(\mathbb{R}^2/\mathbb{Z}^2)\\
    Y|X = x &\sim \Tilde{\mathcal{N}}(x, \sigma^2 I).
\end{cases}
\end{align*}
Here $\mathcal{U}$ and $\Tilde{\mathcal{N}}(x, \sigma^2 I)$ denote the uniform distribution and the wrapped $2D$-Gaussian distribution on the $2$-torus centred at $x$ with covariance matrix $\sigma^2 I$ respectively, and $\sigma$ is the parameter we want to estimate. Note that in this setting both of the marginals of $\pi$ are the uniform distribution on the torus. Therefore the density is simply
\begin{align*}
    p^\sigma(x,y) = \sum_{k\in \mathbb{Z}^2}\frac{1}{2\pi \sigma^2} \exp\Big(-\frac{\norm{x - y + k}^2_{\mathbb{R}^2}}{2\sigma^2}\Big).
\end{align*}

We use the ground truth value $\sigma^* = 0.1$ to generate the samples with $M \in \{2, 10, 50, 250\}$ and $N \in \{20, 200\}$. 
In the first row of \Cref{fig:the_plot}, we illustrate a single broken sample batch for varying $M$. 
The red points ($X$) and blue points ($Y$) are generated in pairs, but the pairing information is not observable from these samples. Although one might guess the pairing when $M$ is small, this is clearly impossible when $M$ is large.

In the second row of \Cref{fig:the_plot}, we show simulation results for $N = 20$ and varying $M$. 
The red curves show the limit $\lpl(\sigma) = \tfrac{1}{2}\|p^\sigma - p^{\sigma^*}\|_{L^2}^2 + \tfrac{1}{2} - \tfrac{1}{2}\|p^{\sigma^*}\|_{L^2}^2$, which is minimised at $\sigma = \sigma^* (= 0.1)$. 
For each $(N,M)$ we perform $50$ simulations and plot the corresponding $\pl(\sigma)$ as blue curves, which tend to cluster around the red curve as $M$ increases (\Cref{thm:mean_pseudo}). 
The empirical minimisers $\sigma_M^N$ of each simulation are marked in orange, and in the third row we provide histograms showing the empirical distributions of the estimators.

The rows $4$ and $5$ are analogous to the rows $2$ and $3$, with $N = 200$. 
One observes that the empirical loss functions and the estimators are more concentrated than those with $N = 20$, as expected (\Cref{thm:main_detailed,thm:main-estimator}).

In \Cref{fig:cv_plot}, we show the coefficient of variation (CV) of $\sigma_M^N$, i.e.~the empirical standard deviation divided by the empirical mean, computed from $100$ simulations with the parameters $\sigma^* \in \{0.01, 0.05, 0.1, 0.2\}$, $N \in \{10, 20, 40, 80\}$, and $M$ varying from $1$ to $200$.

Being a rescaled standard deviation, the CV decreases at the rate roughly $O(N^{-1/2})$ as expected. 
For $M = 1$ and fixed $N$, all of them start at approximately the same level, and we can also observe that for small $\sigma^*$ and $M$ there is a steep decrease of $\sigma^*$ with $M$.
Intuitively this is due to the fact that in this low density regime the pairing can be correctly recovered with high probability, so increasing $M$ has a similar effect to increasing $N$.
As $M$ and $\sigma^*$ increase this benefit of higher $M$ diminishes.

\subsection{Covariance estimation for bivariate normal distribution}
\begin{figure}
    \centering
    \includegraphics[width=\textwidth]{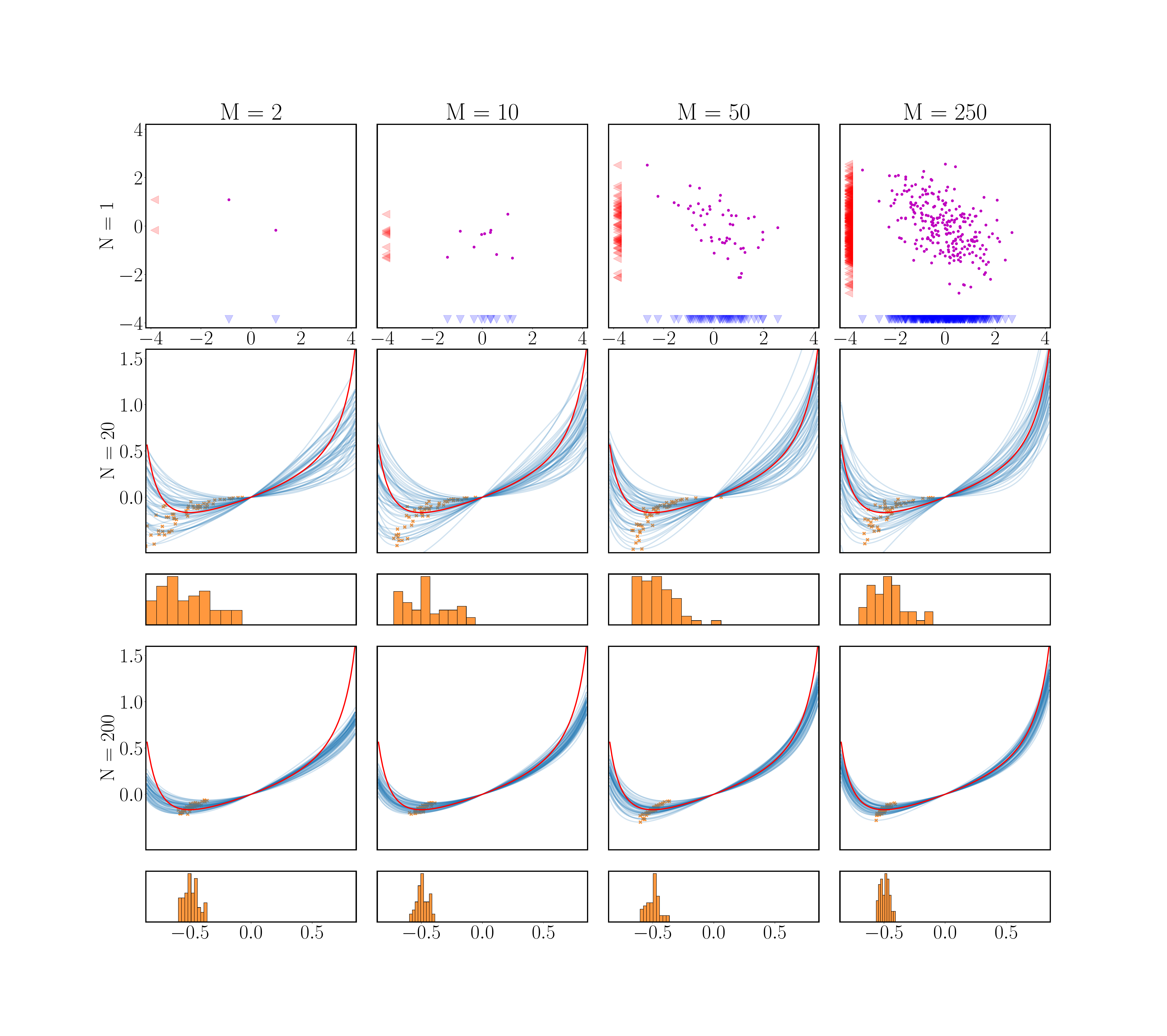}
    \caption{Numerical experiment with broken random samples from a bivariate normal distribution with $\rho^* = -0.5$. The first row shows an example from a single batch; the purple points are “unbroken” $(x,y)$ pairs, whereas in the broken-sample setting only the blue and red marginal points on the axes are observable. The remaining panels are analogous to \Cref{fig:the_plot}, and the red curve is $\lpl(\rho) = \tfrac{1}{2}\lVert p^\rho - p^{\rho^*}\rVert^2_{L^2(\mu\otimes\nu)} + \tfrac{1}{2} - \tfrac{1}{2}\lVert p^{\rho^*}\rVert^2_{L^2(\mu\otimes\nu)}$.}
    \label{fig:bivaGaussian}
\end{figure}
Another interesting setting for the broken sample problem is estimating the correlation coefficient for bivariate normal distribution with known marginals (e.g. see \cite[Section 4]{chan2001file}). Let $\mathbb{X} = \mathbb{Y} = \mathbb{R}$, and
\begin{eqnarray*}
\begin{bmatrix}
X
\\
Y
\end{bmatrix} & \sim & \pi := \mathcal{N}\left(\left[\begin{array}{c}
0\\
0
\end{array}\right],\left[\begin{array}{ccc}
1 & \rho^*\\
\rho^* & 1
\end{array}\right]\right)
\end{eqnarray*}
for some correlation parameter $\rho^* \in (-1,1)$.
Recall that our goal is to estimate $\frac{\diff \pi}{\diff(\mu\otimes\nu)}$, where both $\mu$ and $\nu$ are standard normal distributions on $\mathbb{R}$. Let $\varphi_{X,Y}^\rho$ denote the Lebesgue density of $\pi$ with correlation parameter $\rho$, and let $\varphi_X=\varphi_Y$ denote the Lebesgue density of $\mu=\nu=\mathcal{N}(0,1)$. The parametric density class is then simply taken to be:
\begin{align*}
    p^\rho(x,y):=\frac{\varphi_{X,Y}^\rho(x,y)}{\varphi_X(x)\varphi_Y(y)} \qquad\forall\rho\in[-1+\tau,1-\tau].
\end{align*}
Here, we choose an arbitrary $\tau>0$ to satisfy the conditions in \Cref{def:candidate}. A straightforward calculation yields that, for any $\rho,\rho^*\in[-1+\tau,\,1-\tau]$:
\begin{align*}
    &\norm{p^{\rho^*}}_{L^2(\mu\otimes\nu)}^2 = \frac{1}{(1-{\rho^*}^2)}, 
    &&\norm{p^{\rho^*} - p^\rho}_{L^2(\mu\otimes\nu)}^2
    = \frac{1}{(1-\rho^2)} + \frac{1}{(1-{\rho^*}^2)} - \frac{2}{|\rho\rho^* - 1|}
\end{align*}
We choose the true $\rho^* = -0.5$, and conduct simulations with $M\in\{2,10,50,250\}$ and $N\in\{20,200\}$. In the first row of \Cref{fig:bivaGaussian}, we show a single broken sample batch for varying $M$, note that this only includes the blue and red points on the axis, the purple points are the unbroken points $(x,y)$ therefore would not be available under the broken sample setting. The remaining panels are analogous to \Cref{fig:the_plot} from the torus example and exhibit very similar behaviour of $f_M^N$. It is easy to check that $p^0(x,y) = 1$ for all $(x,y)\in\mathbb{R}^2$, which explains the point shared by the curves at $\rho=0$. 

\bibliographystyle{plain}
\bibliography{bib.bib}
\end{document}